\newtheorem{definition}{Definition}
\newtheorem{theorem}{Theorem}
\newtheorem{proposition}{Proposition}
\newtheorem{lemma}{Lemma}
\def\.{\cdot}
\def\t{\tau}
\def\li{L_{\t_{i}}}
\def\li_p{L_{\t_{i+1}}}
\def\li_m{L_{\t_{i-1}}}
\def\lj{\tilde{L}_{\t_{j}}}
\def\lj_p{\tilde{L}_{\tilde{\t}_{j+1}}}
\def\lj_m{\tilde{L}_{\tilde{\t}_{j-1}}}
\def\li_plus{L_{\t\,_i,\t_{i+1}}}
\def\li_min{L_{\t\,_{i-1}},\t_i}
\def\lj_plus{\tilde{L}_{\tilde{\t}_j,\t_{j+1}}}
\def\lj_min{\tilde{L}_{\tilde{\t}_{j-1}},\t_j}
\def\Br{\text{Br}}
\def\Ir{\text{Ir}}
\def\local{\mathcal{\ell}}
\begin{document}

\title{Strong disorder in semidirected random polymers.}  
\author{N. Zygouras}   
\address{Department of Statistics\\
University of Warwick\\
Coventry CV4 7AL, UK.
e-mail:  \rm \texttt{N.Zygouras@warwick.ac.uk}}
\subjclass[2000]{ 60xx}
\keywords{Random walks, random potential, Lyapunov norms, strong disorder, localization, fractional moments}
\date{\today}
\maketitle
\begin{abstract}
We consider a random walk in a random potential, which models a situation of a random polymer and we study the annealed and quenched costs to perform long crossings from a point to a hyperplane. These costs are measured by the so called Lyapounov norms. We identify situations where the point-to-hyperplane annealed and quenched Lyapounov norms are different. We also prove that in these cases the polymer path exhibits localization.
\end{abstract}

\section{Introduction}
In the probabilistic literature polymers are modeled by a simple random walk $(X_n)_{n\geq 1}$ on $\mathbb{Z}^{d}$, $d\geq 1$. We denote by $P_x$ the distribution of the random walk, when it starts from $x\in \mathbb{Z}^{d}$. When the starting point coincides with the origin we will simply denote its distribution by 
$P$. We also consider a collection of i.i.d. random variables $(\,\omega(x))_{x\in\mathbb{Z}^{d}}$, independent of the walk.
We denote by $\mathbb{P}$ the distribution of this collection. We assume that $\omega$ is nonnegative, does not concentrate on a single point and that $\mathbb{E}[\omega^2]<\infty$.The polymer $(X_n)_{n\geq 1}$ interacts with the disorder $(\,\omega(x))_{x\in\mathbb{Z}^{d}}$, thus giving rise to the modeling of {\it random polymers}. This interaction can be modeled in a number of different ways, corresponding to various physical considerations. In this work we consider the case where the distribution of the random polymer is described in the following way: Let $\hat{l}\in \mathbb{Z}^{d}$ a unit vector, which plays the role of the direction and $T_L^{\hat{l}}:=\inf\{n\colon (X_n-X_0)\cdot\hat{l}\geq L\}$. Then the distribution of the random polymer is given by the Gibbs measure
\begin{eqnarray*}
dP_{L,\omega}^{\beta,\lambda}:=\frac{1}{Z_{L,\omega}^{\beta,\lambda}} e^{-\sum_{n=1}^{T_L^{\hat{l}}}(\lambda+\beta\omega(X_n))}dP
\end{eqnarray*}
where $Z_{L,\omega}^{\beta,\lambda}:=E[e^{-\sum_{n=1}^{T_L^{\hat{l}}}(\lambda+\beta\omega(X_n))} ]$, is the partition function, $\beta>0$ is the inverse temperature. The parameter $\lambda$ is strictly positive and adds an additional penalization to the paths, which take {\it very long time} to reach the hyperplane in direction $\hat{l}$, lying at distance $L$ from the origin. This has the effect that the path feels an additional drift towards direction $\hat{l}$, which justifies the term {\it semidirected}. We will make this point more precise later on. 

Semidirected polymers can be considered as a generalization of {\it directed polymers}, which are known to exhibit a very rich phenomenology. It is expected that the qualitative features of these phenomena should appear in the semidirected case, as well. A great difficulty establishing these features in the semidirected case is that most of the techniques used in the study of directed polymers are based on martingale arguments (with the notable exception of \cite{L}). The martingale formulation is inherent in the directed case, since the path does not visit the same site twice. In the semidirected case, though, the path can visit the same site many times and this introduces correlations which destroy the martingale structure. Therefore, one has to resort to other more quantitative methods of analysis. An attempt towards this direction was recently initiated in \cite{F1}, \cite{Zy}, \cite{IV}, \cite{KMZ} and a purpose of the present work is to continue building towards this direction. Before describing the goals of this work let us review some of the basic notions and current results. A more complete review on the subject appears in the recent article \cite{IVReview}.

A fundamental quantity is the point-to-hyperlane quenched Lyapounov norm
\begin{eqnarray}\label{quenched_Lyap}
\alpha_\lambda^*(\hat{l}) :=-\lim_{L\to\infty}\frac{1}{L}\log E\left[e^{-\sum_{n=1}^{T_L^{\hat{l}} }(\lambda+\beta \omega(X_n))}\right],
\end{eqnarray}
defined for any unit vector $\hat{l}\in \mathbb{Z}^{d}$. $\alpha_\lambda^*(\hat{l})$ is known to be independent of the realization of the disorder $\omega$, i.e. the limit exists $\mathbb{P}-a.s.$, and it can be extended so that to define a norm on $\mathbb{R}^{d}$ \cite{Z},\cite{Sz2}. This norm can be thought of as a measure of the {\it cost} that the random walk $(X_n)_{n\geq 1}$
has to pay in order to perform a long crossing among the potential $-(\lambda+\beta\omega(x))$, $x\in \mathbb{Z}^{d}$, or
 alternatively as the quenched free energy of the semidirected polymer in direction $\hat{l}$. 
 The point-to-hyperplane Lyapounov norms, as well as their dual point-to-point norms $\alpha_\lambda(x):=\sup_{\hat{l}\in \mathbb{R}^{d}} x\cdot\hat{l}/\alpha_\lambda^*(\hat{l})$, were first introduced by Sznitman as part of the program of studying the detailed large deviation properties of Brownian motion among Poissonian obstacles. We will not detail further on this very interesting aspect, but a complete amount of this work can be found in \cite{Sz2}, chapters 5 and 7. Let us point out that the results proved in the present paper could be translated in order to yield information on the above mentioned large deviations rate functions. We will not go down this route, though, since our main focus is to establish a phenomenology on semidirected polymers in analogy with directed polymers.
 
 Significant amount of information about the path properties of the semidirected polymer can be deduced from the study of the quenched Lyapounov norm and in particular from its comparison with the annealed Lyapounov norm. The latter is defined as follows
\begin{eqnarray}\label{annealed_Lyap}
\beta_\lambda^*(\hat{l}) :=-\lim_{L\to\infty}\frac{1}{L}\log \mathbb{E}E\left[e^{-\sum_{n=1}^{T_L^{\hat{l}} }(\lambda+\beta \omega(X_n))}\right]
\end{eqnarray}
Borrowing the terminology from directed polymers, we will say that {\it strong disorder} holds when the annealed and the quenched Lyapounov norms are different. Since it is always the case that $\alpha_\lambda^*(\hat{l})\geq \beta_\lambda^*(\hat{l}) $, strong disorder amounts to a strict inequality between the norms.

It was established in \cite{Zy} that for any $\hat{l}\in \mathbb{Z}^{d}$, when $d\geq 4$ and $\beta<\beta_0(\lambda)$, strong disorder fails, that is $\alpha_\lambda^*(\hat{l})= \beta_\lambda^*(\hat{l}) $, for every $\hat{l}\in \mathbb{Z}^{d}$. In the case that $\hat{l}$ is parallel to a vector of the standard orthonormal basis of $\mathbb{R}^d$ this result was also established in \cite{F1}. A stronger result can be deduced from the proof in \cite{F1}, \cite{Zy}; namely, that when $\hat{l}$ is parallel to a vector of the standard orthonormal basis of $\mathbb{R}^d$, the value $\beta_0$ is independent of $\lambda$ .The same is expected to hold for arbitrary $\hat{l}$. Recently the equality of the Lyapounov norms was strengthened in \cite{IV}, by establishing that, in the same regime of parameters and for $\hat{l}$ parallel to a vector of the standard orthonormal basis of $\mathbb{R}^d$, the limit
$ E[e^{-\sum_{n=1}^{T_L^{\hat{l}} }(\lambda+\beta \omega(X_n))}] / \, \mathbb{E} E[e^{-\sum_{n=1}^{T_L^{\hat{l}} }(\lambda+\beta \omega(X_n))}]$
exists $\mathbb{P}-a.s.$ and it is strictly positive. Furthermore, it was established that, in this regime, the transversal to $\hat{l}$,  
location of the end point $X(T_L^{\hat{l}})$ of the path, satisfies a central limit theorem in $\mathbb{P}$-probability, extending partially in this way the corresponding picture that is valid in directed polymers \cite{B}.

In this paper we will establish the complementary results. Namely, we will identify situations where strong disorder holds and we will further prove that at strong disorder the semidirected polymer exhibits localization phenomena. To be more precise let us state our results. To simplify things we will restrict ourselves to the situation where $\hat{l}=\hat{e}_1$, with $\hat{e}_1,\dots,\hat{e}_{d}$ the canonical basis of $\mathbb{Z}^{d}$. We will also simplify the notation by denoting $\alpha_\lambda^*:=\alpha_\lambda^*(\hat{e}_1)$ and $\beta_\lambda^*:=\beta_\lambda^*(\hat{e}_1)$. Our first result is that
\begin{theorem}\label{Thm 1}
Assume that the disorder $\omega$ is nonnegative, does not concentrate on a single point and $\mathbb{E}[\omega^2]<\infty$. 

A. For any $\lambda>0$, $\beta>0$ and $d=2,3$ we have that
$\alpha_{\lambda}^*>\beta_{\lambda}^*.$

B. The strict inequality between the annealed and quenched norms is also valid in any dimension, if $\beta$ is large enough and the disorder satisfies the additional assumptions that $\text{essinf} \,(\omega)=0$ and $\mathbb{P}(\omega=0)<p_d $, where $p_d$ is the critical probability for site percolation in $\mathbb{Z}^d$.
\end{theorem}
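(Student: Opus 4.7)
My plan is to prove both parts via the fractional moment method, with different single-slab inputs in the two cases. Fix $\theta\in(0,1)$ and write $Z_L=E[\exp(-\sum_{n=1}^{T_L^{\hat e_1}}(\lambda+\b\o(X_n)))]$. Since $\log Z_L/L$ converges a.s.\ and $0\le Z_L\le 1$, a concentration argument gives
\[
-\theta\,\all=\lim_{L\to\infty}\frac{1}{L}\log\EE[Z_L^\theta],\qquad -\bll=\lim_{L\to\infty}\frac{1}{L}\log\EE[Z_L],
\]
so strong disorder reduces to producing $\theta\in(0,1)$ and $\delta>0$ with $\EE[Z_L^\theta]\le C\exp(-L(\theta\bll+\delta))$ for every large $L$.

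The coarse-graining architecture is the same in both parts. Slice $\Z^d$ into slabs $S_k=\{kL_0\le x\cdot\hat e_1<(k+1)L_0\}$ and decompose each trajectory up to $T_L^{\hat e_1}$ according to its transverse boundary positions $y_0,\dots,y_N$, so that
\[
Z_L=\sum_{(y_k)}\prod_{k=0}^{N-1}Z^{(k)}(y_k,y_{k+1}),
\]
with $Z^{(k)}$ depending only on the disorder inside $S_k$. The elementary inequality $(\sum a_i)^\theta\le\sum a_i^\theta$, combined with slab-independence of the disorder and a Gaussian truncation on the transverse increments $|y_{k+1}-y_k|$, reduces the problem to a single-slab inequality
\[
\sup_{y,y'}\EE\bigl[Z^{(0)}(y,y')^\theta\bigr]\le\exp\bigl(-L_0(\theta\bll+2\delta)\bigr)
\]
for some finite $L_0$; iterating across the $N\approx L/L_0$ slabs and absorbing the polynomial-in-$L_0$ combinatorial factor into $\delta$ yields the full bound.

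For Part A I would establish this single-slab inequality by a Lacoin-style change of measure. Introduce a tilted law $\widetilde{\PP}=g\cdot\PP$ on the disorder in $S_0$, with $g$ a product of mild per-site perturbations depressing $\o$ by a small $\varepsilon$. H\"older gives
\[
\EE\bigl[(Z^{(0)})^\theta\bigr]\le\widetilde{\EE}\bigl[g^{-\theta/(1-\theta)}\bigr]^{1-\theta}\widetilde{\EE}\bigl[Z^{(0)}\bigr]^{\theta},
\]
the first factor being an entropy cost of order $\exp(C\varepsilon^2(1-\theta)^{-1}L_0^{d-1})$, while the second gains the multiplicative improvement $\widetilde{\EE}[Z^{(0)}]/\EE[Z^{(0)}]\le\exp(-c\varepsilon\b\cdot\EE[\#\{\text{sites visited in }S_0\}])$. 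Because the transverse walk lives in dimension $d-1\in\{1,2\}$ and is recurrent (or only logarithmically transient), the range of the walk inside a slab grows strictly slower than $L_0^{d-1}$, so tuning $\varepsilon,\theta,L_0$ produces a net strict improvement that beats $\bll$ by a fixed margin. The hard step is quantifying the range (or intersection/local-time) statistics of the \emph{semidirected} walk inside $S_0$, since unlike in directed polymers it may exit and re-enter $S_0$ through its transverse faces; I would import the requisite Green-function and concentration estimates from the weak-disorder analysis of \cite{Zy,IV}.

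For Part B the single-slab inequality follows from a nearly deterministic energy estimate. Since $\PP(\o=0)<p_d$, the zero-set is subcritical, so the probability that $S_0$ contains a connected path of zero-sites crossing in direction $\hat e_1$ decays like $e^{-cL_0}$; on the complementary good event $G_{L_0}$ every crossing trajectory must visit at least one site with $\o>0$ in $S_0$, and paying the corresponding factor gives $Z^{(0)}\le e^{-\b m_{L_0}}\,E[e^{-\lambda T_{L_0}^{\hat e_1}}]$ on $G_{L_0}\cap\{\min_{x\in S_0,\,\o(x)>0}\o(x)\ge m_{L_0}\}$, where $m_{L_0}>0$ is (a suitable quantile of) the minimum positive disorder in $S_0$. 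Simultaneously, $\text{essinf}(\o)=0$ plus dominated convergence gives $\EE[e^{-\b\o}]\to\PP(\o=0)=p_0$, so $\bll$ is uniformly bounded in $\b$. Splitting good and bad events, and bounding the bad event via H\"older against $\EE[Z^{(0)}]^\theta$, produces a slab bound in which the dominant term is $\exp(-c\theta\b m_{L_0})$, strictly smaller than $\exp(-L_0\theta\bll)$ once $\b$ is large enough. The main technical point here is the quantitative subcritical crossing estimate, which is standard.
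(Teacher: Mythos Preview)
Your Part A proposal has a genuine gap in dimension $d=3$. The ``product of mild per-site perturbations'' you describe is a linear (first-order) change of measure, and it is precisely the point of Lacoin's paper \cite{L} that a linear tilt is only marginal in the $1+2$ directed setting, which corresponds to $d=3$ here. Concretely: if you tilt by $\varepsilon$ on a box of size $L_0\times(C\sqrt{L_0})^{d-1}$ (the box must have this transverse scale for the path to stay inside with high probability), the entropy cost scales like $\varepsilon^2 L_0^{(d+1)/2}$, not $\varepsilon^2 L_0^{d-1}$ as you wrote, while the energy gain from the range of the walk in the slab is $\varepsilon\beta\cdot O(L_0)$. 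In $d=2$ one takes $\varepsilon\sim L_0^{-3/4}$ and gets entropy $O(1)$ with gain $\sim L_0^{1/4}\to\infty$; in $d=3$ the best balance gives entropy $O(1)$ and gain $O(1)$, which is not enough to drive the single-slab fractional moment below any prescribed level. The paper, following Lacoin, handles $d=3$ with a \emph{second-order} change of measure: the density $g$ is built from a threshold of a quadratic form $\sum_{y,z}V_{y,z}\overline\omega_y\overline\omega_z$ with a carefully tuned kernel $V_{y,z}$, and the analysis of its effect on the partition function (Lemma \ref{first split 2} and Lemma \ref{delta}) is the technical core of the argument. Your heuristic ``range grows strictly slower than $L_0^{d-1}$ by recurrence'' does not capture the issue: the range is $\Theta(L_0)$ in both dimensions, and the relevant comparison is to the box volume $L_0^{(d+1)/2}$.

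A second omission in Part A: the slab factorisation in the semidirected case is more delicate than you indicate, because the path can backtrack across slab boundaries and destroy the independence you invoke for the $Z^{(k)}$. The paper handles this not by importing estimates from \cite{Zy,IV} but by developing the renewal structure of \emph{break points} and \emph{irreducible bridges} (Definitions \ref{Bridge}--\ref{Irreducible_bridge}, Propositions \ref{massgap}--\ref{local CLT}), and then explicitly recovering a product bound at the cost of a controlled factor per slab by bounding the backtracking segment via the drift $\lambda>0$ (Proposition \ref{2nd Holder}). This machinery is specific to the present argument and is where most of the work adapting Lacoin's method to the undirected walk lies.

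For Part B your route could perhaps be pushed through, but it is considerably more involved than necessary, and one detail is wrong: if $\omega$ has a continuous distribution near zero, the ``minimum positive disorder in $S_0$'' is not a useful threshold. The paper does not use fractional moments here at all. It gives a direct comparison: on the annealed side $\bll\le C(\lambda+\phi(\beta))$ with $\phi(\beta)=-\log\EE[e^{-\beta\omega}]=o(\beta)$ since $\text{essinf}(\omega)=0$; on the quenched side one picks $\omega_d^*>0$ with $\PP(\omega<\omega_d^*)<p_d$ and invokes the first-passage-percolation fact (Kesten) that every long self-avoiding path meets a positive \emph{density} of sites with $\omega\ge\omega_d^*$, giving $\all\ge C_6(\lambda+\beta\omega_d^*)$. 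Linear in $\beta$ beats $o(\beta)$ for $\beta$ large.
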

The first result identifies certain situations where the Lyapounov norms are different. The case $d=1$ is not included, since it can be easily deduced from the work of Sznitman \cite{Sz2}, pg. 233, that strong disorder holds in this case. The one dimensional case is particular since one
can make a more quantitative use of the ergodic theorem. Theorem \ref{Thm 1} also identifies situations where strong disorder is valid, due to the presence of {\it low temperature}, i.e. large $\beta$. Notice that the assumption $\text{essinf}(\omega)=0$ is just a normalization, as we could readjust the value of the parameter $\lambda$.  It is a very interesting, open problem to obtain a quantitative description in dimensions three and above of the phase transition between weak and strong disorder. In the case of directed polymers the existence of a critical value $\beta_c(d)$ separating the two phases has been established \cite{CY}, Lemma 3.3, but such a separation has not been established, yet, for semidirected polymers. Even more interesting would be to understand how this phase transition depends on the distribution of the disorder, as well as the dimension. This type of question is also widely open for directed polymers, although, in that setting  a non-quantitative characterization of $\beta_c(d)$, based on martingale arguments \cite{CSY}, exists.

Our second result is concerned with the distribution of the end point of the semidirected polymer when it reaches a hyperplane at distance $L$ from the origin. To this end let us define the measure
\begin{eqnarray}\label{p-t-h-measure}
\mu_{L,\omega}^{\beta,\lambda}(x):=
\frac{E\left[e^{-\sum_{n=1}^{T_L} (\lambda+\beta \omega(X_n) )} ;X(T_L) =x\right] }
{E\left[e^{-\sum_{n=1}^{T_L} (\lambda+\beta \omega(X_n) )}\right]},
\end{eqnarray}
where $T_L:=\inf\{n\colon (X_n-X_0)\cdot\hat{e}_1\geq L\}$. Then we have
\begin{theorem}\label{Thm 2}
If $\alpha_\lambda^*>\beta_\lambda^*$ then $\mathbb{P}-a.s.$ we have that 
$$\limsup_{L\to\infty}\sup_{x\colon x\cdot \hat{e}_1=L}\mu_{L,\omega}^{\beta,\lambda}(x) >0.$$
\end{theorem}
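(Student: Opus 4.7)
The strategy is to adapt the Carmona--Hu entropy argument (familiar from directed polymers) with successive hyperplanes $H_L:=\{z\in\mathbb{Z}^d:z\cdot\hat e_1=L\}$ playing the role of the forward time steps. Set $W_L:=Z_{L,\omega}^{\beta,\lambda}/\mathbb{E}[Z_{L,\omega}^{\beta,\lambda}]$. The almost-sure limits $-\tfrac1L\log Z_{L,\omega}^{\beta,\lambda}\to\alpha_\lambda^*$ and $-\tfrac1L\log\mathbb{E}[Z_{L,\omega}^{\beta,\lambda}]\to\beta_\lambda^*$ combined with $\alpha_\lambda^*>\beta_\lambda^*$ give $\mathbb{E}[\log W_L]=-(\alpha_\lambda^*-\beta_\lambda^*)L+o(L)$, so $\log W_L\to-\infty$ linearly.

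Decomposing $Z_{L+1,\omega}^{\beta,\lambda}$ by the endpoint on $H_L$ and applying the strong Markov property at $T_L$ yields the exact identity
$$\frac{Z_{L+1,\omega}^{\beta,\lambda}}{Z_{L,\omega}^{\beta,\lambda}}=\sum_{y\in H_L}\mu_{L,\omega}^{\beta,\lambda}(y)\,g_y(\omega),\qquad g_y(\omega):=E_y\!\left[e^{-\sum_{n=1}^{\hat T_{L+1}}(\lambda+\beta\omega(X_n))}\right],$$
where $\hat T_{L+1}$ is the first hitting time of $H_{L+1}$ under $P_y$. The crucial preparatory step is to decouple $\{g_y\}_{y\in H_L}$ from $\mathcal{F}_L:=\sigma(\omega(z):z\cdot\hat e_1<L)$. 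The $\lambda$-penalty makes excursions back below $H_{L-1}$ exponentially rare in their depth, so that (up to controllable corrections) the normalized variables $V_y:=g_y/\mathbb{E}[g_y]-1$ are mean-zero, bounded (since $0\le g_y\le 1$), finite-range correlated across $y$, and essentially independent of $\mathcal{F}_L$.

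Granting the decoupling, set $U_L:=W_{L+1}/W_L$, apply the inequality $\log(1+x)\le x-c_0 x^2$ (valid on a bounded interval, which holds by boundedness of $V_y$), take conditional expectation given $\mathcal{F}_L$, and use the approximate uncorrelatedness of the $V_y$'s (common variance $\sigma^2>0$) to obtain
$$\mathbb{E}[\log U_L\mid\mathcal{F}_L]\;\le\; -c_0\sigma^2\sum_{y\in H_L}\mu_{L,\omega}^{\beta,\lambda}(y)^2 + \mathrm{err}(L),$$
with $\mathrm{err}(L)$ controlled by the decoupling estimates. Summing over $L$ and taking $\mathbb{E}[\cdot]$, the left-hand side equals $\mathbb{E}[\log W_L]=-(\alpha_\lambda^*-\beta_\lambda^*)L+o(L)$, whence
$$\liminf_{L\to\infty}\frac{1}{L}\sum_{k=0}^{L-1}\mathbb{E}\!\left[\sum_{y\in H_k}\mu_{k,\omega}^{\beta,\lambda}(y)^2\right]\;\ge\;c\;>\;0.$$
Consequently $\mathbb{E}[\sum_y\mu_{L_n,\omega}^{\beta,\lambda}(y)^2]$ stays bounded away from zero along some subsequence $L_n\uparrow\infty$. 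Since $\sum_y\nu(y)^2\le\sup_y\nu(y)$ for any probability measure $\nu$, this gives $\mathbb{E}[\sup_x\mu_{L_n,\omega}^{\beta,\lambda}(x)]\ge c/2$, and a zero--one argument using translation invariance of $\omega$ in directions transversal to $\hat e_1$ upgrades this to the almost-sure statement $\limsup_L\sup_x\mu_{L,\omega}^{\beta,\lambda}(x)>0$.

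The main obstacle is the decoupling step: in the purely directed case each $g_y$ depends on a fresh slab of disorder, the $V_y$ are literally i.i.d.\ and independent of $\mathcal{F}_L$, and the Carmona--Hu calculation is automatic. In the semidirected setting the walk may revisit slabs below $H_L$ and share $\omega$-values with the pre-$T_L$ path; quantifying the resulting coupling requires the $\lambda$-drift together with simple-random-walk hitting estimates in the spirit of those used in the subadditive construction of $\alpha_\lambda^*$ and $\beta_\lambda^*$.
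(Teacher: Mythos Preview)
Your overall architecture---telescope the log partition function across hyperplanes, decompose the one-step ratio as a $\mu_{L,\omega}$-average of fresh partition functions, and use a variance/second-moment argument to show that if $\mu_{L,\omega}$ spread out the ratio would concentrate on its annealed value---is indeed the same as the paper's. But the step you yourself flag as ``the main obstacle'' is a genuine gap, not a routine technicality, and the paper resolves it by a device you are missing.

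In your decomposition $Z_{L+1}/Z_L=\sum_y\mu_{L,\omega}(y)g_y(\omega)$, the walk starting at $y\in H_L$ can backtrack arbitrarily far below $H_L$ before hitting $H_{L+1}$, so $g_y$ genuinely depends on $\omega$ at all levels, not just on a fresh slab. Saying that the $\lambda$-penalty makes deep excursions rare does not by itself give you either (i) conditional independence of $g_y$ from $\mathcal F_L$, or (ii) finite-range correlation among the $g_y$'s; even one-level backtracks already couple $g_y$ with both $\mu_{L,\omega}$ and with neighbouring $g_{y'}$. Your Carmona--Hu entropy inequality $\mathbb{E}[\log U_L\mid\mathcal F_L]\le -c_0\sigma^2\sum_y\mu_{L,\omega}(y)^2+\mathrm{err}(L)$ therefore rests on an unproved decoupling, and the error term has no estimate.

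The paper avoids this entirely by working with \emph{bridges} rather than the full partition function. It replaces $\mu_{L,\omega}$ by the bridge-endpoint law $\hat\mu_{L,\omega}(x)=\overline B_\omega(L;x)/\overline B_\omega(L)$ and uses the one-sided inequality $\overline B_\omega(nL)\ge\sum_x\overline B_\omega((n-1)L;x)\,\overline B_{x,\omega}(L)$ obtained by forbidding backtracking across $\mathcal H_{(n-1)L}$. The point is that a bridge from $0$ to $\mathcal H_{(n-1)L}$ uses only disorder at levels $<(n-1)L$, while a bridge from $x\in\mathcal H_{(n-1)L}$ uses only disorder at levels $\ge(n-1)L$; hence $\hat\mu_{(n-1)L,\omega}(x)$ and $\overline B_{x,\omega}(L)$ are \emph{exactly} independent, with no decoupling estimate needed. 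The paper then runs a Chebyshev/variance argument (not the $\log(1+x)\le x-c_0x^2$ trick) on blocks of size $L$ rather than single steps: assuming $\sup_x\hat\mu_{nL,\omega}(x)\to0$, the variance of $\sum_x\hat\mu_{nL,\omega}(x)\overline B_{x,\omega}(L)$ is controlled by $\sup_x\hat\mu_{nL,\omega}(x)$ (after a localisation of $\overline B_{x,\omega}(L)$ to a box of size $O(L)$), forcing the average to be close to $\overline B(L)$ and yielding $\overline m_B^q\le\overline m_B^a+\epsilon_1/2$, a direct contradiction. No zero--one law is needed.

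In short: your sketch has the right skeleton but the crucial independence is manufactured in the paper by passing to bridges, a move that is absent from your proposal and without which the argument does not close.
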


This result should be contrasted with the one in \cite{IV} about diffusive bahavior in the case of small $\beta$ and high dimension. Our result indicates that the mass of the distribution of the polymer, when this reaches certain hyperplanes, does not spread out as in the case of diffusive behavior. Instead it develops atoms, which means that there are areas (whose location is random) on the various hyperplanes, where the polymer concentrates with high probability. In other words, the polymer localizes. This type of localization is known to exist in directed polymers \cite{CSY}, \cite{V} and our result can be viewed as an extension to the semidirected case.

The organisation of the paper is as follows. In section \ref{notation} we introduce the necessary notation and recall a number of basic results upon which the analysis is based. 
Most of these appear in previous works and we try to sketch the proofs of a number of them. A number of new auxiliary results, is also included. In Section \ref{large beta} we prove part B of Theorem \ref{Thm 1}. In Section \ref{proof of Thm 1} we prove part A of Theorem \ref{Thm 1}. Here we use the method of estimating fractional moments in the way this was developed through the study of random pinning polymers \cite{GLT} and applied to directed polymers \cite{L}. The successful application of the fractional moment method in our case builds crucially on a certain renewal structure of the semidirected polymers. Finally in Section \ref{proof of Thm 2} we prove the localization property stated in Theorem \ref{Thm 2}. 

Let us make a note on notation. $C$ will denote some generic constant, whose values do not depend on any of the other parameters, e.g. $\lambda,\beta, d$, etc. and whose value may be different in different appearances. In the case of some important constants, whose value needs to be distinguished we will enumerate them, i.e. $C_1,C_2,$ etc. When we want to stress the dependence of the constant on some other parameter we will indicate this by a subscript, e.g. $C_\epsilon$. We will also frequently use the decomposition $x:=(x^{(1)},x^\perp)$, for an arbitrary point $x\in \mathbb{Z}^{d}$, where $x^{(1)}:=x\cdot \hat{e}_1\in \mathbb{Z}$ and $x^\perp\in \mathbb{Z}^{d-1}$. For a set $A$, we will denote by $\overline{A}$ its complement. Finally in order to lighten the notation we will refrain from using the symbol $[x]$ to denote the integer part of a parameter $x$ and instead we will be using the symbol $x$ having in mind that it means the {\it closest} lattice point to $x$. It is unlikely that this convention will lead to any confusion, but on the other hand it will make the notation much lighter.
 
\section{ Notation and preliminary results.}\label{notation}

Let us define the local time at $x\in \mathbb{Z}^{d}$, between times $M,N$ by $\ell_{M,N}(x)=\sum_{n=M}^{N-1} 1_{x}(X_n)$. Whenever $M=0$ we will simply denote it by $\ell_{N}(x)$. To simplify notation we will also drop the subscript $N$ when it is clear when is the terminal time within which we consider the local time.

We denote by $\mathcal{H}_L:=\{x\in \mathbb{Z}^{d}\colon x\cdot\hat{e}_1=L\}$. We also define the hitting time and the hitting point of the hyperplane at distance $L$ from the starting point of the walk (this being $\mathcal{H}_L$ if the starting point is the origin) as
\begin{eqnarray*}
&&T_L:=\inf\{ n: (X_n-X_0)\cdot \hat{e}_1 =L\},\\
&&\hat{X}_{L}:=X(T_{L}).
\end{eqnarray*} 
and the last hitting point of the hyperplane at distance $L$ from the starting point of the walk as
\begin{eqnarray*}
S_L:=\sup\{ n: (X_n-X_0)\cdot \hat{e}_1 =L\}.
\end{eqnarray*}

We already mentioned in the introduction that the parameter $\lambda$ introduces an effective drift  towards direction $\hat{e}_1$. 
We will make this point more explicit by the use of a Girsanov type argument. It will turn out that this formulation is more convenient and we will adapt it through out the rest of the paper. To be more precise let $P^\kappa_x$ the distribution of a random walk starting from $x\in \mathbb{Z}^{d}$
with transition  probabilities
\begin{equation}\label{walk_drift}
\pi_{\kappa}(x,y)=\left\{\begin{array}{ll}
\frac{e^{\kappa (y-x)\.\hat{e}_1}}{2(\cosh(\kappa)+d-1)} &, \text{if} \quad |x-y|=1,\\
0& , \text{if} \quad |x-y|\neq 1.
\end{array}\right.
\end{equation}
The parameter $\kappa:=\kappa(\lambda,\beta)$ is chosen so that it satisfies the equation
\begin{equation}\label{hl}
\log\left(\frac{\cosh(\kappa)+d-1}{d}\right)=\lambda +\beta\mathbb{E}[\omega]. 
\end{equation}
In other words $P^\kappa_x$ is a random walk with a drift towards the $\hat{e}_1$ direction. As usual, we will not include the subscript $x$ when this coincides with the origin. It will also be convenient to center the disorder $\omega$. To this end we write $\overline\omega_x:=\omega_x -\mathbb{E}[\omega_x]$, for every $x\in \mathbb{Z}^{d}$ and we have
\begin{eqnarray*}
E\left[ e^{-\sum_{n=1}^{T_L}(\lambda+\beta\omega(X_n))}\right]&=&
  E\left[ e^{-\sum_{n=1}^{T_L}(\lambda+ \beta\mathbb{E}[\omega]+\beta\overline\omega(X_n))}\right]\\
  &=&E^\kappa\left[ \frac{dP}{dP^\kappa}\Big|_{\mathcal{F}_{T_L}}e^{-\sum_{n=1}^{T_L}(\lambda+ \beta\mathbb{E}[\omega]+\beta\overline\omega(X_n))}\right]\\
  &=&E^\kappa\left[ e^{-\kappa\sum_{n=1}^{T_L}(X_n-X_{n-1})\cdot\hat{e}_1 -T_L\log(d/(\cosh{\kappa}+d-1))} e^{-\sum_{n=1}^{T_L}(\lambda+ \beta\mathbb{E}[\omega]+\beta\overline\omega(X_n))}\right].
\end{eqnarray*}
 Here $ \mathcal{F}_{T_L}$ is the $\sigma-$algebra generated by the first $T_L$ steps of the random walk. The choice of $\kappa$ in \eqref{hl} and the fact that 
  $\kappa\sum_{n=1}^{T_L}(X_n-X_{n-1})\cdot\hat{e}_1=\kappa (X_{T_L}-X_0)\cdot\hat{e}_1=\kappa L$ gives that the above is equal to
  \begin{eqnarray}\label{weighted partition}
  e^{-\kappa L} E^\kappa\left[  e^{ -\sum_{n=1}^{T_L} \beta\overline\omega(X_n)}  \right].
  \end{eqnarray}
From this it is evident that
\begin{eqnarray*}
\alpha_\lambda^*=\kappa -\lim_{L\to\infty}\frac{1}{L}\log E^\kappa\left[  e^{ -\sum_{n=1}^{T_L} \beta\overline\omega(X_n)}  \right],
\end{eqnarray*}
and  
\begin{eqnarray*}
\beta_\lambda^*=\kappa -\lim_{L\to\infty}\frac{1}{L}\log \mathbb{E} E^\kappa\left[  e^{ -\sum_{n=1}^{T_L} \beta\overline\omega(X_n)}  \right].
\end{eqnarray*}
Let us also denote the log-moment generating function of $\overline\omega$ by 
\begin{eqnarray}\label{log-mom}
\overline\phi(t):=-\log \mathbb{E}\left[e^{-t\overline\omega}\right],
\end{eqnarray}
and the annealed potential 
\begin{eqnarray}\label{annealed potential}
\overline\Phi_\beta(M,N):=-\log \mathbb{E}\left[ e^{-\beta\sum_x\overline\omega(x)\ell_{M,N}(x)} \right]
=\sum_x \overline\phi(\beta\ell_{M,N}(x)).
\end{eqnarray}
Again, when $M=0$ we will simply denote this by $\overline\Phi_\beta(N)$.
The next proposition collects some properties of the function $\overline\Phi_\beta$, which are useful and easy to verify. Here, we will only give a sketch of the proof.
\begin{proposition}\label{properties}
(i) For $M,N$ integers we have that 
\begin{equation*}
\overline\Phi_\beta(M+N)\leq\overline\Phi_\beta(N)+\overline\Phi_\beta(N,N+M).
\end{equation*}
(ii) Let $N_1<N_2<N$, then 
\begin{equation*}
\overline\Phi_\beta(N)\geq\overline\Phi_\beta([0,N_1]\cup [N_2,N])-\beta\mathbb{E}[\omega]\,(N_2-N_1).
\end{equation*}
(iii) If $ (X_n)_{0\leq n< N_1}\cap (X_n)_{N_1 \leq  n\leq N_1+N_2}=\emptyset$, then
\begin{equation*}
\overline\Phi_\beta(N_1+N_2)=\overline\Phi_\beta(N_1)+\overline\Phi_\beta(N_1,N_1+N_2).
\end{equation*} 
\end{proposition}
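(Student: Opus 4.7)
The plan is to reduce all three parts to elementary properties of the one-variable function $\overline\phi(t)=-\log\mathbb{E}[e^{-t\overline\omega}]$ together with the additivity of local times across disjoint time intervals, namely $\ell_{M,N+M}(x)=\ell_N(x)+\ell_{N,N+M}(x)$ and the identity $\sum_x \ell_{M,N}(x)=N-M$. The two analytic properties of $\overline\phi$ that I would establish at the outset are:
\begin{itemize}
\item[(a)] $\overline\phi(0)=0$ and $\overline\phi$ is concave on $[0,\infty)$, because differentiating under the expectation (which is legitimate by $\mathbb{E}[\omega^2]<\infty$) gives $\overline\phi''(t)=-\mathrm{Var}_{t}(\overline\omega)\le 0$, where $\mathrm{Var}_t$ is the variance under the tilted law proportional to $e^{-t\overline\omega}$.
\item[(b)] The map $t\mapsto\overline\phi(t)+t\,\mathbb{E}[\omega]=-\log\mathbb{E}[e^{-t\omega}]$ is nondecreasing, since $\omega\ge 0$ makes $t\mapsto e^{-t\omega}$ nonincreasing.
\end{itemize}

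For (i), I would note that $\overline\phi$ concave with $\overline\phi(0)=0$ is automatically subadditive on $[0,\infty)$: for $s,t\ge 0$, write $s=\tfrac{s}{s+t}(s+t)+\tfrac{t}{s+t}\cdot 0$ and $t=\tfrac{t}{s+t}(s+t)+\tfrac{s}{s+t}\cdot 0$, apply concavity to each, and add. Then apply subadditivity pointwise in $x$ to $\beta\ell_{M+N}(x)=\beta\ell_N(x)+\beta\ell_{N,N+M}(x)$ and sum over $x$.

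For (ii), I would use (b): since $t\mapsto\overline\phi(t)+t\,\mathbb{E}[\omega]$ is nondecreasing, for every $x$
\begin{equation*}
\overline\phi\bigl(\beta(\ell_{N_1}(x)+\ell_{N_1,N_2}(x)+\ell_{N_2,N}(x))\bigr)\ge \overline\phi\bigl(\beta(\ell_{N_1}(x)+\ell_{N_2,N}(x))\bigr)-\beta\,\mathbb{E}[\omega]\,\ell_{N_1,N_2}(x).
\end{equation*}
Summing in $x$ and using $\sum_x \ell_{N_1,N_2}(x)=N_2-N_1$ yields the stated inequality.

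For (iii), the disjointness hypothesis forces at each $x$ either $\ell_{N_1}(x)=0$ or $\ell_{N_1,N_1+N_2}(x)=0$; together with $\overline\phi(0)=0$ this collapses $\overline\phi(\beta\ell_{N_1}(x)+\beta\ell_{N_1,N_1+N_2}(x))$ to $\overline\phi(\beta\ell_{N_1}(x))+\overline\phi(\beta\ell_{N_1,N_1+N_2}(x))$ summand by summand. No real obstacle arises; the one mildly delicate point is the differentiation step in (a), which requires finiteness of moments of $\overline\omega$ under the tilt — this is ensured by the nonnegativity of $\omega$ together with $\mathbb{E}[\omega^2]<\infty$.
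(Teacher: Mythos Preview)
Your argument is correct and follows essentially the same route as the paper's sketch: for (i) the paper invokes H\"older's inequality (equivalently, log-convexity of moment generating functions, i.e.\ your concavity of $\overline\phi$) or Harris--FKG to get the pointwise subadditivity $\overline\phi(s+t)\le\overline\phi(s)+\overline\phi(t)$; for (ii) the paper uses the monotonicity $\ell_N\ge\ell_{[0,N_1]\cup[N_2,N]}$ together with the lower bound $\beta\overline\omega\ge-\beta\mathbb{E}[\omega]$, which is exactly your observation (b); and (iii) is the independence/disjoint-support argument in both.
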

The notation used on the right hand side of $(ii)$ means that in the evaluation of $\overline\Phi_\beta([0,N_1]\cup [N_2,N])$ we consider the local time
$\ell_{[0,N_1]\cup [N_2,N]}:=\ell_{N_1}+\ell_{N_2,N}$. The proof of $(ii)$ makes use of the monotonicity $\ell_{N}\geq \ell_{[0,N_1]\cup [N_2,N]}$ and the fact that the potential $\beta\overline\omega$ is bounded below by $-\beta\mathbb{E}[\omega]$. The proof
 of $(iii)$ uses the independence of the potentials
visited by the two parts of the walk. Finally, the proof of $(i)$ makes an easy use of H\"older's inequality. Alternatively one can deduce it via the Harris-FKG inequality of positive association.

The following definitions set the grounds upon which the analysis of semidirected polymers is based. The notions of {\it break points} and {\it irreducible bridges}, presented below, are in the core of the renewal structure upon which the parallelisms with directed polymers are based. Formally speaking, a path going from the origin to a hyperplane, will have points in its trajectory with the property that, once the path reaches them, it does not backtrack in the future behind their level. Therefore, the trajectory can be decomposed into a sequence of non intersecting {\it cylinders}. What is important is that the range of the path within these cylinders as well as the potential encountered by the corresponding parts of the path are independent with each other.

\begin{definition}\label{Bridge}
(i) Consider the walk $(\,X_n\,)_{M\leq n\leq N}$. 
We will say that the walk forms a bridge of span $L$,
 and  denote it by $\Br(M,N;L)$, if
\begin{equation*}
X_M\cdot\hat{e}_1\leq X_n\cdot\hat{e}_1<X_N\cdot\hat{e}_1,
\end{equation*}
for $M\leq n<N$, and $(X_N-X_M)\cdot\hat{e}_1=L$. When $M=0$, we will
write $\Br(N;L)$ instead.

(ii) Let us denote 
\begin{eqnarray*}
&&\overline{B}_{x,\omega}(L):=E_x^\kappa\left[ e^{-\sum_{n=0}^{T_L-1}\beta \overline\omega(X_n)  }; \text{Br}(T_L,L)\right]=
\sum_{N=1}^\infty E_x^\kappa\left[ e^{-\sum_{n=0}^{N-1}\beta \overline\omega(X_n)  }; \text{Br}(N,L)\right],
\end{eqnarray*}
and
\begin{eqnarray*}
\overline{B}_x(L)=E_x^\kappa\left[ e^{-\overline\Phi_\beta(T_L)}\,;\,\Br(T_L;L)\right]
=\sum_{N=1}^\infty E_x^\kappa\left[ e^{-\overline\Phi_\beta(N)}\,;\,\Br(N;L)\right].
\end{eqnarray*}
If $A$ is an event on the random walk we denote
\begin{eqnarray*}
&&\overline{B}_{x,\omega}(L;A):=E_x^\kappa\left[ e^{-\sum_{n=0}^{T_L-1}\beta \overline\omega(X_n)  }; \text{Br}(T_L;L)\cap A\right],
\end{eqnarray*}
and
\begin{eqnarray*}
\overline{B}_x(L;A)=E_x^\kappa\left[ e^{-\overline\Phi_\beta(T_L)}\,;\,\Br(T_L;L)\cap A\right].
\end{eqnarray*}
\end{definition}
\begin{definition}\label{break_point}
Consider the random walk $(X_n)_{M\leq n\leq N}$. We will say that the random walk has a break point at
level $L$, if there exists an $n$, with $M<n<N$ such that $X_n\cdot\hat{e}_1=L$ and 
\begin{equation*}
X_{n_1}\cdot\hat{e}_1<X_n\cdot\hat{e}_1\leq X_{n_2}\cdot\hat{e}_1,
\end{equation*}
for $M\leq n_1<n\leq n_2\leq N$.
\end{definition}
\begin{definition}\label{Irreducible_bridge}
(i)
Consider the random walk $(\,X_n\,)_{M\leq n\leq N}$.
We will say that the random walk forms an irreducible bridge
of span $L$, and we denote it by $\Ir(M,N;L)$,
if it forms a bridge of span $L$ with no break points.
When $M=0$ we will write $\Ir(N;L)$ instead.

(ii) Let us denote 
\begin{eqnarray*}
&&\overline{I}_{x,\omega}(L):=E_x^\kappa\left[ e^{-\sum_{n=0}^{T_L-1}\beta \overline\omega(X_n)  }; \text{Ir}(T_L;L)\right]=
\sum_{N=1}^\infty E_x^\kappa\left[ e^{-\sum_{n=0}^{N-1}\beta \overline\omega(X_n)  }; \text{Ir}(N;L)\right],
\end{eqnarray*}
and
\begin{eqnarray}
\overline{I}_x(L)
=E_x^\kappa\left[ e^{-\overline\Phi_\beta(T_L)}\,;\,\Ir(T_L;L)\right]
=\sum_{N=1}^\infty E_x^\kappa\left[ e^{-\overline\Phi_\beta(N)}\,;\,\Ir(N;L)\right].
\end{eqnarray}
If $A$ is an event on the random walks we denote
\begin{eqnarray*}
&&\overline{I}_{x,\omega}(L;A):=E_x^\kappa\left[ e^{-\sum_{n=0}^{T_L-1}\beta \overline\omega(X_n)  }; \text{Ir}(T_L;L)\cap A\right],
\end{eqnarray*}
and
\begin{eqnarray*}
\overline{I}_x(L;A)=E_x^\kappa\left[ e^{-\overline\Phi_\beta(T_L)}\,;\,\Ir(T_L;L)\cap A\right].
\end{eqnarray*}
\end{definition}
As usual we will refrain from including the subscript $x$ in the above definitions, when this coincides with the origin.
\begin{definition}
 Let us define the quenched and annealed mass for bridges, respectively, by
 \begin{eqnarray}\label{quenched_mass}
\overline{m}_B^q&:=&\lim_{L\to\infty}-\frac{1}{L}\log
 E^\kappa\left[ e^{-\sum_{n=0}^{T_L-1} \beta\,\overline\omega(X_n)}\,;\Br(T_L;L)\,\right]\nonumber\\
&=&\lim_{L\to\infty}-\frac{1}{L}\log \overline{B}_\omega(L).
\end{eqnarray}
and
\begin{eqnarray}\label{annealed_mass}
\overline{m}_B^a&:=&\lim_{L\to\infty}-\frac{1}{L}\log
\mathbb{E} E^\kappa\left[ e^{-\sum_{n=0}^{T_L-1} \beta\,\overline\omega(X_n)}\,;\Br(T_L;L)\,\right]\nonumber\\
&=&\lim_{L\to\infty}-\frac{1}{L}\log \overline{B}(L).
\end{eqnarray}
\end{definition}
The existence of these limits follows standard subadditive arguments. The bridge masses $\overline{m}_B^a,\overline{m}_B^q$ depend on the parameters $\lambda, \beta$, but for simplicity we will not include this dependence in the notation.  
It is easy to see that $\alpha_\lambda^*=\kappa+\overline{m}_B^q$ and that $\beta_\lambda^*=\kappa+\overline{m}_B^a$. Therefore Theorems  \ref{Thm 1} and \ref{Thm 2} can be recast in terms of the quenched and annealed masses. In fact our main goal will be to prove that $\overline{m}_B^q>\overline{m}_B^a$  under the conditions of Theorem \ref{Thm 1} and then that the situation of strict inequality between the masses implies that the conclusion of Theorem \ref{Thm 2} holds.
 
The following proposition will be useful, since it gives a uniform bound on the decay of $\overline{B}(L)$.
\begin{proposition}\label{subadditivity}
There exists a constant $\mu_0<1$, such that for every $L$, 
$$\mu_0e^{-\overline{m}_B^a L}\leq \overline{B}(L)\leq e^{-\overline{m}_B^a L}.$$
\end{proposition}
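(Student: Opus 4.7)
The argument splits cleanly into an easy upper bound and a more delicate lower bound; the upper bound uses only the superadditivity of $\overline B(\cdot)$ and Fekete's lemma, whereas the lower bound must exploit the renewal structure furnished by the bridge/irreducible-bridge decomposition.

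For the upper bound, the plan is to show $\overline B(L_1+L_2)\geq\overline B(L_1)\overline B(L_2)$ by inserting a break point at level $L_1$. Indeed, any bridge of span $L_1+L_2$ having a break point at level $L_1$ splits uniquely, in view of Definition \ref{break_point}, into a bridge of span $L_1$ from the origin to some endpoint $y$ with $y\cdot\hat{e}_1=L_1$ followed by a bridge of span $L_2$ starting at $y$; the two pieces have disjoint ranges, so Proposition \ref{properties}(iii) factorises the annealed potential, and translation invariance of $\mathbb{P}$ then gives the superadditivity after summing over $y$. Hence $-\log\overline B(\cdot)$ is subadditive, and Fekete's lemma identifies $\overline m_B^a=\inf_L\!\bigl(-\log\overline B(L)/L\bigr)$, from which $\overline B(L)\leq e^{-\overline m_B^a L}$ follows for every $L$.

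For the matching lower bound I would appeal to the renewal identity arising from classifying a bridge by its sequence of break points: each bridge of span $L$ splits uniquely into $k\geq 1$ irreducible sub-bridges of spans $L_1,\ldots,L_k\geq 1$ with $L_1+\cdots+L_k=L$ and pairwise disjoint ranges, yielding
\[
\overline B(L)\ =\ \sum_{k\geq 1}\overline I^{\,\ast k}(L),
\]
where $\ast$ is convolution on $\{1,2,\ldots\}$. Setting $q_L:=\overline I(L)\,e^{\overline m_B^a L}$ and $\rho_L:=\overline B(L)\,e^{\overline m_B^a L}$, this reads $\rho_L=\sum_{k\geq 1}q^{\ast k}(L)$, so $\rho_L$ is the renewal function generated by $q$. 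Provided $q$ is an aperiodic probability measure on $\{1,2,\ldots\}$ with finite mean (aperiodicity is free since $\overline I(1)>0$), the discrete renewal theorem then yields $\rho_L\to 1/\!\sum_L L\,q_L>0$ as $L\to\infty$; combined with the obvious fact $\rho_L>0$ for each fixed $L$ this implies $\mu_0:=\inf_L\rho_L>0$, which is precisely the desired inequality $\overline B(L)\geq\mu_0 e^{-\overline m_B^a L}$.

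The substantive step, and what I expect to be the main obstacle, is the verification of the hypotheses of the renewal theorem for $q$. Both $\sum_L q_L=1$ and $\sum_L L\,q_L<\infty$ would follow from an exponential separation of the form $\overline I(L)\leq C\,e^{-(\overline m_B^a+\delta)L}$ for some $\delta>0$: summability of $\sum_L L\,q_L$ is then immediate, while the identity $\tilde B(z)=\tilde I(z)/(1-\tilde I(z))$, combined with the facts that $e^{\overline m_B^a}$ is exactly the radius of convergence of $\tilde B(\cdot)$ but, by the separation, lies interior to the radius of convergence of $\tilde I(\cdot)$, forces $\tilde I(e^{\overline m_B^a})=1$ by continuity and monotonicity. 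The exponential gap between irreducible and general bridges would be approached by a surgery argument: at a positive density of intermediate levels one can locally reroute the walk underlying any irreducible bridge to create an artificial break point, paying only a bounded entropic cost (one extra walk-probability factor) and a bounded energetic cost (controlled via Proposition \ref{properties}(ii)), leading to an estimate of the form $\overline I(L)\leq e^{-\delta L}\overline B(L)$ and hence the required separation.
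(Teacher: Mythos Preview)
Your upper bound is exactly the paper's: supermultiplicativity $\overline B(L_1+L_2)\geq\overline B(L_1)\,\overline B(L_2)$ via the bridge inclusion and Proposition~\ref{properties}(iii), then Fekete.

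For the lower bound, the paper takes a far more direct route than yours. Rather than renewal theory, it proves the \emph{reverse} inequality
\[
\overline B(L_1+L_2)\ \leq\ \mu_0^{-1}\,\overline B(L_1)\,\overline B(L_2),
\qquad \mu_0^{-1}:=\sum_{n\geq 1}e^{\beta\mathbb E[\omega]n}P^\kappa(X_n^{(1)}=0)=\sum_{n\geq 1}e^{-\lambda n}P(X_n^{(1)}=0),
\]
obtained by splitting any bridge of span $L_1+L_2$ at the first and the last visit to level $L_1$: the initial and terminal pieces are bridges of spans $L_1$ and $L_2$ with ranges separated by $\mathcal H_{L_1}$ (so Proposition~\ref{properties}(iii) factorises the potential), while the middle excursion is simply discarded via the crude bound $\beta\overline\omega\geq-\beta\mathbb E[\omega]$ (Proposition~\ref{properties}(ii)). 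Then $\mu_0^{-1}\overline B(\cdot)$ is submultiplicative and Fekete gives $\overline B(L)\geq\mu_0 e^{-\overline m_B^a L}$ in one line. Your renewal-theoretic argument is valid in principle---the Pringsheim step forcing $\tilde I(e^{\overline m_B^a})=1$ once $\tilde I$ is analytic past that point is correct---but it rests entirely on the mass-gap estimate $\overline I(L)\leq e^{-\delta L}\overline B(L)$, which is a substantial result in its own right; in the paper it appears only later as Proposition~\ref{massgap}, with proof deferred to the literature, and the surgery you sketch has non-trivial details. Moreover, the paper's logical order runs the other way: the lower bound of Proposition~\ref{subadditivity} is an \emph{input} to the proof that $\sum_L e^{\overline m_B^a L}\overline I(L)=1$ (see the sentence following \eqref{probability}), and only after that is the renewal theorem invoked to obtain $\hat B(L)\to\mu^{-1}$. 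Your route works but trades a two-line argument for a dependence on the hardest estimate in this section.
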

 The proof of the right hand inequality is based on the basic supermultiplicative property of bridges, that is $\overline{B}(L_1+L_2)\geq \overline{B}(L_1)\overline{B}(L_2)$, for any $L_1,L_2$. This can be deduced from the inclusion $\text{Br}(T_{L_1+L_2};L_1+L_2 ) \supset \text{Br}(T_{L_1};L_1)\cap \text{Br}(T_{L_1},T_{L_1,L_1+L_2};L_2)$ and property $(iii)$ of Proposition \ref{properties}. The left hand side inequality is based on the reverse multiplicative property of the annealed potential, that is $\overline{B}(L_1+L_2)\leq \overline{B}(L_1)\overline{B}(L_2)\\ \times\sum_{n=1}^\infty e^{\beta \mathbb{E}[\omega]n} P^\kappa (X_n^{(1)}=0)$. This is easily deduced by bounding from below the potential $\beta\overline{\omega}$, encountered by the part of the path between the first and the last time that it lies on level $\mathcal{H}_{L_1}$, by $-\beta \mathbb{E}[\omega]$. An easy computation shows that $\sum_{n=1}^\infty e^{\beta \mathbb{E}[\omega]n} P^\kappa (X_n^{(1)}=0)=\sum_{n=1}^\infty e^{-\lambda n}P(X_n^{(1)}=0)<\infty$. $\mu_0$ is then chosen to be $\left(\sum_{n=1}^\infty e^{-\lambda n}P(X_n^{(1)}=0)\right)^{-1}$.

We also define
\begin{eqnarray}\label{normalized_B}
\hat{B}_\omega(L):=e^{\overline{m}_B^a L}\overline{B}_\omega(L).
\end{eqnarray}
and
\begin{eqnarray}\label{normalized_Bann}
\hat{B}(L):=e^{\overline{m}_B^a L}\overline{B}(L).
\end{eqnarray}
Central tool in the study of semidirected polymers is the renewal structure, which governs the annealed and the quenched irreducible bridges. This is summarised in the relation 
\begin{eqnarray}\label{renewal}
\overline{B}(L)=\sum_{k=1}^N\overline{I}(k)\overline{B}(L-k),
\end{eqnarray} 
which can be obtained by decomposing the bridge $\overline{B}(T_L;L)$ according to when the first break point occurs. 
A number of very useful properties can be deduced from the relation \eqref{renewal}. The most fundamental one is that 
\begin{eqnarray}\label{probability}
\sum_{L=1}^\infty e^{\overline{m}_B^aL} \overline{I}(L)=1.
\end{eqnarray}
The proof of this statement follows a generating functions calculation in the frame of standard renewal theory together with the lower estimate of Proposition \ref{subadditivity}. The details of the proof (with a little different notation) can be found either in \cite{Zy}, Proposition 4.2, or \cite{F1}, Lemma 2.15. It follows that  $(\hat{I}(L))_{L=1,2,\dots}:=(e^{\overline{m}_B^aL} \overline{I}(L ))_{L=1,2,\dots}$ is a probability distribution and moreover, it has exponential moments. In particular,we have that
\begin{proposition}\label{massgap}
There exists a $\rho=\rho(\lambda)>0$ such that, for any $\beta>0$
\begin{eqnarray*}
\sum_{L=1}^\infty e^{(\rho+\overline{m}_B^a)L}\overline{I}(L)<\infty.
\end{eqnarray*}
\end{proposition}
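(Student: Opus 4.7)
The statement strengthens \eqref{probability}: the probability distribution $\hat I(L):=e^{\overline{m}_B^a L}\overline I(L)$ has exponential moments, with rate $\rho$ depending only on $\lambda$. My plan is to establish the pointwise estimate $\hat I(L)\le C(\lambda)\,e^{-\rho(\lambda) L}$, from which the claim follows immediately.

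The driving mechanism is that an irreducible bridge of span $L$ is an extremely restrictive event. By Definition~\ref{Irreducible_bridge}, at each of the $L-1$ intermediate levels $k\in\{1,\dots,L-1\}$ the walk must, after first reaching level $k$, subsequently visit some site below level $k$ before time $T_L$. Under the drifted walk $P^\kappa$, each such backtracking costs a factor strictly less than $1$; since \eqref{hl} forces $\kappa\ge\kappa(\lambda,0)>0$ uniformly in $\beta\ge 0$ (recall $\beta\,\mathbb{E}[\omega]\ge 0$), the resulting per-level cost is bounded by a $\beta$-independent constant $q_0=q_0(\lambda)<1$.

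The technical route I would follow is to apply Cauchy--Schwarz on $E^\kappa$,
\begin{equation*}
\overline I(L)\;\le\;\bigl(E^\kappa[e^{-2\overline\Phi_\beta(T_L)}\,;\,\Br(T_L;L)]\bigr)^{1/2}\bigl(P^\kappa(\Ir(T_L;L))\bigr)^{1/2},
\end{equation*}
(using $\Ir\subset\Br$) and bound each factor separately. For the disorder factor, the pointwise inequality $(\mathbb{E}[e^{-\beta\sum\overline\omega}])^2\le\mathbb{E}[e^{-2\beta\sum\overline\omega}]$ (ordinary Cauchy--Schwarz applied to the disorder) translates into $e^{-2\overline\Phi_\beta}\le e^{-\overline\Phi_{2\beta}}$, after which Proposition~\ref{subadditivity} applied at inverse temperature $2\beta$ gives a bound $\le e^{-\overline{m}_{B,2\beta}^a\,L/2}$, where $\overline m_{B,2\beta}^a$ denotes the annealed bridge mass at inverse temperature $2\beta$. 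For the walk factor, iterating the strong Markov property at the successive first-passage times $\sigma_k=\inf\{n:X_n\cdot\hat e_1=k\}$ and exploiting translation invariance in the transversal direction reduces $P^\kappa(\Ir(T_L;L))$ to a product of conditional single-level backtracking probabilities, each bounded by $\overline p(\kappa)\le q_0$, yielding $P^\kappa(\Ir(T_L;L))\le C\,q_0^{L-1}$.

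Combining these estimates gives
\begin{equation*}
\hat I(L)\;\le\;C\exp\Bigl[\bigl(\overline m_B^a-\tfrac12\overline m_{B,2\beta}^a+\tfrac12\log q_0\bigr)L\Bigr].
\end{equation*}
The main obstacle is to show, uniformly in $\beta>0$, that the non-negative quantity $\overline m_B^a-\tfrac12\overline m_{B,2\beta}^a$ is strictly smaller than $\tfrac12\log(1/q_0)$. Its non-negativity is guaranteed by $\overline B_\beta(L)^2\le\overline B_{2\beta}(L)$, but that inequality unfortunately points the ``wrong'' way for an upper bound. I expect this last step to be resolved either by a finer convexity argument based on the concavity of $t\mapsto\overline\phi(t)$, quantifying how $\overline\Phi_{2\beta}$ differs from $2\overline\Phi_\beta$ along typical bridge paths under $P^\kappa$, or by replacing Cauchy--Schwarz by an appropriately tuned H\"older interpolation, whose exponents are chosen in terms of $\kappa(\lambda,0)$ so that the surplus decay coming from the walk factor absorbs the loss coming from the disorder comparison.
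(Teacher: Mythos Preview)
The paper does not prove this proposition itself; it refers to Theorem~2.18 of \cite{F1} (with related arguments in \cite{CC}, \cite{T}, \cite{Z}, \cite{CIV}, \cite{IV2}). Your Cauchy--Schwarz route is therefore genuinely different from anything the paper does, but it is incomplete, and you acknowledge as much.

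The decisive gap is the one you flag at the end: you need $\overline m_B^a-\tfrac12\,\overline m_{B,2\beta}^a<\tfrac12\log(1/q_0)$ uniformly in $\beta>0$, and you supply only hopes. This is not a detail that a ``finer convexity argument'' or a H\"older tweak will absorb: the left-hand side is precisely the Cauchy--Schwarz defect, and bounding it uniformly in $\beta$ amounts to controlling how the annealed free energy varies with temperature, which is at least as hard as the mass-gap statement you are after. Two further problems compound this. First, ``Proposition~\ref{subadditivity} at inverse temperature $2\beta$'' controls $E^{\kappa'}[e^{-\overline\Phi_{2\beta}};\Br]$ with $\kappa'=\kappa(\lambda,2\beta)$, not $E^{\kappa}[e^{-\overline\Phi_{2\beta}};\Br]$; the change of drift carries a factor $\bigl((\cosh\kappa'+d-1)/(\cosh\kappa+d-1)\bigr)^{T_L}$ with random exponent $T_L$, so the substitution is not free. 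Second, your product bound $P^\kappa(\Ir(T_L;L))\le Cq_0^{L-1}$ does not follow from ``iterating the strong Markov property at the $\sigma_k$'': the event that the walk backtracks below level $k$ after $\sigma_k$ is \emph{not} $\mathcal F_{\sigma_{k+1}}$-measurable (the backtracking may occur only after $\sigma_{k+1}$), so the events at successive levels do not decouple into independent one-step factors. Exponential decay of $P^\kappa(\Ir(T_L;L))$ is in fact the $\beta=0$ instance of the very proposition you are trying to prove.

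The proofs in the cited references avoid separating disorder from geometry. They work directly with the annealed weights $\overline I(L),\overline B(L)$, combining the renewal identity \eqref{renewal} with the two-sided bound of Proposition~\ref{subadditivity} and a surgical comparison between irreducible and reducible bridges; the per-level cost is paid in the common annealed measure, so no cross-temperature mass comparison ever arises, and the resulting $\rho$ comes out depending only on $\lambda$.
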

Such an estimate is known as {\it mass gap estimate}. In the context of self-avoiding walks it was first proven in \cite{CC}. It was later adapted to the context of random walks in random potentials in \cite{T},\cite{F1}. Such type of estimate in the context of Lyapounov norms, independent of the direction and for small $\beta$ was established in \cite{Z}  (in this same work a separate proof, valid for all $\beta$, along coordinate directions, that was meant to simplify the existing ones, appears to be flawed). Finally mass gap estimates that also apply on different contexts, such as Ising models appear in \cite{CIV} and \cite{IV2}.  
A proof of Proposition \ref{massgap} most relevant to our setting (with a little different notation) can be found in \cite{F1}, Theorem 2.18.

We will denote the mean of this probability distribution by $\mu$, i.e.
\begin{eqnarray}\label{mu}
\mu:=\sum_{L=1}^\infty L e^{\overline{m}_B^a L}\overline{I}(L)<\infty.
\end{eqnarray}
 Using standard renewal theory arguments one can also easily deduce that $\hat{B}(L)\to\mu^{-1}$, as $L$ tends to infinity, refining in a sense Proposition \ref{subadditivity}.
\vskip 2mm
The importance of the above considerations is that they lead to a Markovian structure of the triplet $(X_{\tau_i}, \mathcal{L}_i,\tau_i)$, where $\tau_i$ denotes the time when the $i^{th}$ break point occurs, $\mathcal{L}_i$ the span of the $i^{th}$ irreducible bridge and $X_{\tau_i}$ the position of the path at the break point. This Markovian structure is central in our considerations and is described by the following Markov measure.
\begin{definition} The measure $P^\beta$ denotes the distribution of the Markov process $(X_{\tau_i}, \mathcal{L}_i,\tau_i)$ with transition probabilities given by
\begin{eqnarray*}
&&p^\beta(y_{i+1},L_{i+1},n_{i+1};y_i,L_i,n_i):=e^{\overline{m}_B^aL_{i+1}}\\ 
&&\qquad\times\,\, E^\kappa_{y_i}\left[ e^{-\overline\Phi_\beta(n_{i+1}-n_i)};\Ir(n_{i+1}-n_i,L_{i+1}), \,X_{n_{i+1}-n_i}=y_i\right].
\end{eqnarray*}
\end{definition}
It follows from Proposition \ref{massgap} that $E^\beta[e^{\rho \mathcal{L}_1}]<\infty$. Some further elaboration on this relation leads to the following proposition
\begin{proposition}\label{exponential_moments}
There exists $\rho_1=\rho_1(\beta,\lambda)>0$ such that, for every $\beta>0$
\begin{eqnarray*}
P^\beta(\tau_1>u)\leq e^{-\rho_1u}.
\end{eqnarray*}
It moreover follows that $E^\beta[e^{\frac{\rho_1}{2}\sup_{n<\tau_1} |X_n|}]<\infty$.
\end{proposition}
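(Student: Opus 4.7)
The strategy is to decompose $P^\beta(\tau_1>u)$ according to $\mathcal{L}_1$, noting that on $\{\mathcal{L}_1=L\}$ one has $\tau_1=T_L$. From the definition of the transition kernel of $P^\beta$,
\begin{equation*}
P^\beta(\tau_1>u,\,\mathcal{L}_1=L)=e^{\overline{m}_B^aL}\,E^\kappa\!\left[e^{-\overline\Phi_\beta(T_L)};\,\Ir(T_L;L),\,T_L>u\right].
\end{equation*}
The key input is a crude pointwise upper bound on the annealed weight: since $\omega\geq 0$, one has $\overline\omega\geq -\EE[\omega]$, so $\mathbb{E}[e^{-t\overline\omega}]\leq e^{t\EE[\omega]}$, giving $\overline\phi(t)\geq -t\,\EE[\omega]$ for $t\geq 0$, and thus $-\overline\Phi_\beta(T_L)\leq \beta\,\EE[\omega]\,T_L$.

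Next I would substitute $e^{-\overline\Phi_\beta(T_L)}\leq e^{\beta\EE[\omega]T_L}$ and undo the $\kappa$-tilt via
\begin{equation*}
\frac{dP^\kappa}{dP}\Big|_{\mathcal{F}_{T_L}}=e^{-T_L(\lambda+\beta\EE[\omega])+\kappa(X_{T_L}-X_0)\cdot\hat{e}_1},
\end{equation*}
which on $\Ir(T_L;L)$ equals $e^{-T_L(\lambda+\beta\EE[\omega])+\kappa L}$. The factors $e^{\beta\EE[\omega]T_L}$ cancel, and using $e^{-\lambda T_L}\leq e^{-\lambda u}$ on $\{T_L>u\}$ one obtains
\begin{equation*}
E^\kappa\!\left[e^{\beta\EE[\omega]T_L};\,\Ir(T_L;L),\,T_L>u\right]=e^{\kappa L}\,E\!\left[e^{-\lambda T_L};\,\Ir(T_L;L),\,T_L>u\right]\leq e^{\kappa L-\lambda u}.
\end{equation*}
Recalling $\beta_\lambda^*=\kappa+\overline{m}_B^a$, this delivers the master estimate $P^\beta(\tau_1>u,\mathcal{L}_1=L)\leq e^{\beta_\lambda^*L-\lambda u}$.

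The tail bound is then completed by interpolating this with Proposition \ref{massgap}. Fix $\epsilon\in(0,\lambda/\beta_\lambda^*)$ and split the sum over $L\geq 1$ at $L=\epsilon u$. The contribution from $L\leq\epsilon u$ is dominated by the geometric sum, at most $Cu\,e^{(\beta_\lambda^*\epsilon-\lambda)u}$; the contribution from $L>\epsilon u$ is at most $P^\beta(\mathcal{L}_1>\epsilon u)\leq Ce^{-\rho\epsilon u}$ via Markov's inequality and Proposition \ref{massgap}. Picking $\rho_1>0$ slightly smaller than $\min(\lambda-\beta_\lambda^*\epsilon,\rho\epsilon)$ yields $P^\beta(\tau_1>u)\leq e^{-\rho_1 u}$.

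The final assertion is then immediate: the unit-step property $|X_n|\leq n$ forces $\sup_{n<\tau_1}|X_n|\leq\tau_1$, so
\begin{equation*}
E^\beta\!\left[e^{\frac{\rho_1}{2}\sup_{n<\tau_1}|X_n|}\right]\leq E^\beta\!\left[e^{\rho_1\tau_1/2}\right]=1+\frac{\rho_1}{2}\int_0^\infty e^{\rho_1s/2}P^\beta(\tau_1>s)\,ds\leq 1+\frac{\rho_1}{2}\int_0^\infty e^{-\rho_1s/2}\,ds<\infty.
\end{equation*}
The only non-routine step is the change-of-measure manoeuvre above, which is what trades the blow-up factor $e^{\beta\EE[\omega]T_L}$ produced by the crude bound on the annealed potential for the exponentially decaying factor $e^{-\lambda T_L}$; everything else is routine bookkeeping.
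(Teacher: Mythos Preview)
Your proof is correct and follows essentially the same approach as the paper's: both split according to the size of $\mathcal{L}_1$, handle large $\mathcal{L}_1$ via the mass gap estimate (Proposition \ref{massgap}), and for small $\mathcal{L}_1$ undo the $\kappa$-tilt to recover the factor $e^{-\lambda T_L}$ which provides the exponential decay in $u$. The only cosmetic difference is that the paper keeps half of the $\lambda$ to ensure summability in $N$ whereas you discard the nonnegative potential and use the full $e^{-\lambda T_L}\leq e^{-\lambda u}$, then sum over $L$; the resulting cutoff $\epsilon u$ with $\epsilon<\lambda/\beta_\lambda^*$ plays exactly the role of the paper's $h\leq \lambda/(4(\overline{m}_B^a+\kappa))$.
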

\begin{proof}
We have that
\begin{eqnarray}\label{tau 1 exponential}
P^\beta(\tau_1>u)\leq P^\beta(\mathcal{L}_1>hu)+ P^\beta(\tau_1>u;\,\mathcal{L}_1\leq hu),
\end{eqnarray}
for some $h$ small enough that will be chosen below. The first term in \eqref{tau 1 exponential} is estimated by
\begin{eqnarray*}
P^\beta(\mathcal{L}_1>hu)\leq
e^{-\rho h u} E^\beta[e^{\rho\mathcal{L}_1}]\leq Ce^{-\rho h u},
\end{eqnarray*}
where the last inequality is thanks to Proposition \ref{massgap}. Regarding the second term in \eqref{tau 1 exponential}, this is estimated as follows
\begin{eqnarray*}
 P^\beta(\tau_1>u;\,\mathcal{L}_1\leq hu) &=& 
    \sum_{N>u}\sum_{L\leq hu} e^{\overline{m}_B^aL} E^\kappa\left[ e^{-\overline\Phi_\beta(N)};\,\Ir(N;L)\,\right]\\
    &=&  \sum_{N>u}\sum_{L\leq hu} e^{(\overline{m}_B^a+\kappa)L} e^{-\kappa L}E^\kappa\left[ e^{-\overline\Phi_\beta(N)};\,\Ir(N;L)\,\right]\\
    &=& \sum_{N>u}\sum_{L\leq hu} e^{(\overline{m}_B^a+\kappa)L} \mathbb{E}E\left[ e^{-\lambda N-\sum_{n=1}^N\beta\omega(X_N)};\,\Ir(N;L)\,\right],
\end{eqnarray*}
where the last follows as in \eqref{weighted partition}, with $\kappa$ being as in \eqref{hl}. Continuing on the above we have that
\begin{eqnarray*}
P^\beta(\tau_1>u;\,\mathcal{L}_1\leq hu)&\leq& e^{(\overline{m}_B^a+\kappa)hu}e^{-\frac{\lambda}{2}u} 
\sum_{N>u}\sum_{L\leq hu} \mathbb{E} E\left[ e^{-\frac{\lambda}{2} N-\sum_{n=1}^N\beta\omega(X_N)};\,\Ir(N;L)\,\right]\\
&\leq& e^{-\frac{\lambda}{4}u},
\end{eqnarray*}
with the last inequality valid if $h\leq4^{-1}(\overline{m}_B^a+\kappa)^{-1}\lambda$. Combining the two estimates we have that
\begin{eqnarray*}
P^\beta(\tau_1>u)\leq Ce^{-\rho h u}+e^{-\frac{\lambda}{4}u}.
\end{eqnarray*}
From this, the proposition follows with $\rho_1:=\rho_1(\beta,\lambda)=\min(\lambda/4,\rho(\overline{m}_B^a+\kappa)^{-1}\lambda/4)$.
\end{proof}
The following local limit theorem, proven in \cite{BS}, Theorem 5.1, will be useful towards Proposition \ref{local CLT}, below.
\begin{theorem}\label{BS}(\cite{BS})
Consider a distribution $p(\cdot)$ on $\mathbb{Z}^d, d\geq 1$ with covariance matrix $\Sigma_p$ and mean $\mu_p:=\sum_{x\in \mathbb{Z}^d} x p(x)$ satisfying
\begin{eqnarray}\label{g1}
\sum_{x\in \mathbb{Z}^d} p(x) e^{\gamma_0 |x|} \leq \gamma_1,
\end{eqnarray}
\begin{eqnarray}\label{g2}
 \Sigma_p\geq \gamma_2 I_d,
\end{eqnarray}
for certain constants $\gamma_0,\gamma_1,\gamma_2$.
Denote by $p_n(\cdot)$ its $n^{th}$ convolution. Then there exists an $\epsilon(\gamma_0, \gamma_1,\gamma_2)$ such that for $\epsilon<\epsilon(\gamma_0, \gamma_1,\gamma_2)$ there are positive constants $\tilde\delta:=\tilde\delta_{\gamma_0, \gamma_1,\gamma_2}, \tilde\delta_\epsilon:=\tilde\delta_\epsilon(\gamma_0, \gamma_1,\gamma_2)$ and $C=C(\gamma_0,\gamma_1,\gamma_2)$, such that
\begin{eqnarray}\label{local CLT bound}
p_n(x)\leq  \varphi_n^C(x) 1_{|x-n\mu_p|<n\epsilon} +Ce^{-\tilde\delta_\epsilon|x-n\mu_p|} 1_{|x-n\mu_p|\geq n\epsilon},
\end{eqnarray}
where
\begin{eqnarray*}
 \varphi_n^C(x):=\frac{C}{n^{d/2}} e^{-\frac{\tilde\delta|x-n\mu_p|^2}{2n}}.
\end{eqnarray*}
\end{theorem}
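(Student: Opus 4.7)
The plan is to prove the bound by Fourier inversion combined with an exponential tilting (Cramér) argument, handling the two regimes of $x$ separately in a unified way. First, I would recall that for any lattice distribution,
\begin{equation*}
p_n(x)=\frac{1}{(2\pi)^d}\int_{[-\pi,\pi]^d}\hat{p}(\theta)^n e^{-i\theta\cdot x}\,d\theta,
\end{equation*}
where $\hat{p}(\theta)=\sum_y p(y)e^{i\theta\cdot y}$. The assumption \eqref{g1} implies $\hat p$ extends to an analytic function on a complex strip, which is exactly what we need to shift the contour. So for each $x$ I would introduce the tilt $\eta=\eta(x,n)\in\mathbb{R}^d$ chosen so that the tilted distribution
\begin{equation*}
p^{(\eta)}(y):=p(y)\,e^{\eta\cdot y-\Lambda(\eta)},\qquad \Lambda(\eta):=\log\sum_y p(y)e^{\eta\cdot y},
\end{equation*}
has mean $x/n$, i.e. $\nabla\Lambda(\eta)=x/n$. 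By \eqref{g1} and \eqref{g2}, $\Lambda$ is smooth and strictly convex on a ball around $0$, so $\eta$ is uniquely determined and depends smoothly on $x/n$ as long as $|x/n-\mu_p|$ is small. Shifting the Fourier contour by $-i\eta$ gives the exact identity
\begin{equation*}
p_n(x)=e^{n\Lambda(\eta)-\eta\cdot x}\,p^{(\eta)}_n(x).
\end{equation*}

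The second step is to apply a uniform local CLT to $p^{(\eta)}_n(x)$. Since the tilted distribution has mean exactly $x/n$ and (by \eqref{g2} plus continuity of the Hessian of $\Lambda$) a covariance matrix uniformly bounded below by some $\gamma_2'>0$ on the tilt range, I would use the classical characteristic function argument: write
\begin{equation*}
p^{(\eta)}_n(x)=\frac{1}{(2\pi)^d}\int_{[-\pi,\pi]^d}\hat{p}^{(\eta)}(\theta)^n\,d\theta,
\end{equation*}
split into $|\theta|\le n^{-1/2}\log n$ and its complement, Taylor-expand $\log\hat{p}^{(\eta)}$ to second order on the small piece (which gives the Gaussian factor $C n^{-d/2}$), and use $|\hat{p}^{(\eta)}(\theta)|\le 1-c|\theta|^2$ on the complement (which produces negligible error). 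The required bounds on $\hat{p}^{(\eta)}$ are uniform in $\eta$ on the tilt range thanks to \eqref{g1} and \eqref{g2}, so this step yields $p^{(\eta)}_n(x)\le C n^{-d/2}$.

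The third step is to evaluate the exponent $n\Lambda(\eta)-\eta\cdot x$ using the Legendre duality $\Lambda^*(v)=\sup_\eta(\eta\cdot v-\Lambda(\eta))$. Then $n\Lambda(\eta)-\eta\cdot x=-n\Lambda^*(x/n)$. For $|x-n\mu_p|<n\epsilon$ with $\epsilon$ small, strict convexity of $\Lambda$ at $0$ gives $\Lambda^*(v)\ge\tilde\delta|v-\mu_p|^2/2$, hence $-n\Lambda^*(x/n)\le -\tilde\delta|x-n\mu_p|^2/(2n)$, which combined with the previous step produces the Gaussian bound $\varphi_n^C(x)$. For $|x-n\mu_p|\ge n\epsilon$ one either still has $\eta$ well defined (if $x/n$ remains in the dual of the tilt range) and then $\Lambda^*(v)\gtrsim |v-\mu_p|$, giving the exponential bound $Ce^{-\tilde\delta_\epsilon|x-n\mu_p|}$; or $x/n$ lies outside the tilt range, in which case a direct Chernoff estimate using \eqref{g1} yields the same exponential decay.

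The main obstacle I expect is not any single step in isolation but keeping all constants explicit and uniform: the tilt $\eta$ has to be controlled in a neighborhood whose size depends only on $\gamma_0,\gamma_1,\gamma_2$, the local CLT for the tilted walks has to be uniform in $\eta$, and the crossover $|x-n\mu_p|\sim n\epsilon$ between the two regimes has to be managed so that the two bounds glue into a single inequality. The lattice (rather than continuous) setting also requires care in the Fourier step to ensure that the integral is over a compact box and that non-degeneracy of $p$ on $\mathbb{Z}^d$ (implicit in \eqref{g2}) rules out secondary minima of $|\hat{p}^{(\eta)}|$ on the torus.
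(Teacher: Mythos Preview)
The paper does not prove this theorem at all: it is quoted verbatim as Theorem~5.1 of Bolthausen--Sznitman \cite{BS} and used as a black box (to feed Proposition~\ref{local CLT} and, later, the control of the term $II$ in the proof of Lemma~\ref{second split}). So there is nothing in the paper to compare your argument against.

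That said, your outline is the standard route to such uniform local limit bounds, and it is essentially the one used in \cite{BS}: exponential tilting to recenter the walk at $x/n$, a uniform local CLT for the tilted family via Fourier inversion on the torus, and Legendre duality $n\Lambda(\eta)-\eta\cdot x=-n\Lambda^*(x/n)$ to read off the Gaussian exponent in the diffusive window and the linear exponent outside it. Your identification of the delicate points (uniformity in $\eta$, matching the two regimes at $|x-n\mu_p|\sim n\epsilon$, the crude Chernoff bound when $x/n$ falls outside the tilt window) is accurate.

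One caveat worth flagging: the claim that \eqref{g2} alone ``rules out secondary minima of $|\hat p^{(\eta)}|$ on the torus'' is not quite right. A nondegenerate covariance does not by itself prevent $p$ from being supported on a proper sublattice of $\mathbb{Z}^d$, in which case $|\hat p(\theta)|=1$ at nonzero torus points. For an \emph{upper} bound of the form \eqref{local CLT bound} this is harmless---one simply passes to the sublattice generated by the support of $p$ and absorbs the resulting volume factor into $C$---but your Fourier step as written (``use $|\hat p^{(\eta)}(\theta)|\le 1-c|\theta|^2$ on the complement'') would need this adjustment. In \cite{BS} the issue does not arise because the relevant step distribution is genuinely aperiodic.
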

We close this section with the following local limit type annealed estimate that will be useful in several occasions in our estimates towards the inequality of the norms in dimensions two and three. 
\begin{proposition}\label{local CLT}
For $x^\perp\in \mathbb{Z}^{d-1}$ and $L>0$ integer, there is a constant $C$ such that
\begin{eqnarray*}
\hat{B}(L;\hat{X}_L=x)\leq \frac{C}{L^{(d-1)/2}}e^{-\frac{C|x^\perp|^2}{2L}}.
\end{eqnarray*} 
\end{proposition}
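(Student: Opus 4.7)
The plan is to use the break-point renewal decomposition to express $\hat B(L;\hat X_L=x)$ as a sum of convolution powers of the joint $P^\beta$-law of $(\mathcal L_1,X_{\tau_1}^\perp)$, and then invoke the local limit theorem stated as Theorem \ref{BS}.

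Specifically, I introduce
\begin{equation*}
\nu(L,y^\perp):=e^{\overline m_B^a L}\,E^\kappa\!\left[e^{-\overline\Phi_\beta(T_L)};\,\Ir(T_L;L),\,X_{T_L}^\perp=y^\perp\right],
\end{equation*}
which by \eqref{probability} is a probability measure on $\mathbb Z\times\mathbb Z^{d-1}\cong\mathbb Z^d$ coinciding with the $P^\beta$-law of $(\mathcal L_1,X_{\tau_1}^\perp)$. A bridge of span $L$ ending at $(L,x^\perp)$ decomposes uniquely at its break points into $n\in\{1,\dots,L\}$ irreducible pieces whose ranges lie in disjoint slabs, so property (iii) of Proposition \ref{properties} gives independence of the visited potentials and the normalized weight factorises; summing over $n$ yields the identity
\begin{equation*}
\hat B(L;\hat X_L=x)=\sum_{n=1}^{L}\nu^{*n}(L,x^\perp).
\end{equation*}

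I next verify the hypotheses of Theorem \ref{BS} for $\nu$. Condition \eqref{g1} is immediate from Propositions \ref{massgap} and \ref{exponential_moments}, which supply exponential moments of $\mathcal L_1$ and of $|X_{\tau_1}^\perp|\le\sup_{n<\tau_1}|X_n|$ respectively. For \eqref{g2}, the reflection symmetries $\hat e_j\mapsto-\hat e_j$ ($j\ge 2$) of both $P^\kappa$ and the i.i.d.\ environment give $E^\beta[X_{\tau_1}^{(j)}]=0$ and force $\Sigma_\nu=\mathrm{diag}(\sigma_L^2,\sigma^2 I_{d-1})$, with $\sigma_L^2=\mathrm{Var}^\beta(\mathcal L_1)>0$ (the span is not constant) and $\sigma^2=\mathrm{Var}^\beta(X_{\tau_1}^{(2)})>0$ (a single irreducible bridge admits transversal excursions of either sign).

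Setting $\bar\mu:=E^\beta[\mathcal L_1]\in(0,\infty)$, so that $\nu$ has mean $(\bar\mu,0)$, Theorem \ref{BS} then furnishes, for suitable $\epsilon,\tilde\delta,\tilde\delta_\epsilon,C>0$,
\begin{equation*}
\nu^{*n}(L,x^\perp)\le\frac{C}{n^{d/2}}e^{-\tilde\delta\frac{(L-n\bar\mu)^2+|x^\perp|^2}{2n}}1_{A_n}+Ce^{-\tilde\delta_\epsilon|(L-n\bar\mu,x^\perp)|}1_{A_n^c},
\end{equation*}
with $A_n=\{|(L-n\bar\mu,x^\perp)|<n\epsilon\}$. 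I then sum in $n\in\{1,\dots,L\}$: in the Gaussian regime, the $O(\sqrt L)$ indices with $|n\bar\mu-L|\lesssim\sqrt L$ each contribute at most $L^{-d/2}e^{-c|x^\perp|^2/L}$, totalling $L^{-(d-1)/2}e^{-c|x^\perp|^2/L}$, while indices outside this window are killed by the Gaussian factor in $(L-n\bar\mu)$; in the large-deviation regime, $\sum_n e^{-\tilde\delta_\epsilon|L-n\bar\mu|/\sqrt 2}$ is bounded, and the residual $e^{-\tilde\delta_\epsilon|x^\perp|/\sqrt 2}$ is absorbed into $L^{-(d-1)/2}e^{-c|x^\perp|^2/L}$ by splitting on $|x^\perp|\le L$ (where $|x^\perp|\ge|x^\perp|^2/L$) versus $|x^\perp|>L$ (where the surplus exponential $e^{-c|x^\perp|}$ easily beats $L^{(d-1)/2}$). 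The main obstacle I anticipate is the nondegeneracy check \eqref{g2}, in particular making precise that the $P^\beta$-law of the transversal displacement of a single irreducible bridge is nontrivial along every transverse coordinate, together with the elementary but delicate bookkeeping needed to patch the Gaussian and large-deviation outputs of Theorem \ref{BS} into a single bound uniformly in $L$ and $x^\perp$.
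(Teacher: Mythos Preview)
Your proposal is correct and follows essentially the same route as the paper: define the single-step law $p^\beta(L,x^\perp)=P^\beta(\mathcal L_1=L,\hat X^\perp(\tau_1)=x^\perp)$, write $\hat B(L;\hat X_L=x)=\sum_n p^\beta_n(L,x^\perp)$ via the break-point decomposition, verify the hypotheses of Theorem~\ref{BS}, and sum the resulting bound over $n$. The paper dispatches the nondegeneracy check as ``apparent'' and the final summation as ``easy''; your explicit justification via the reflection symmetries and your split into the $|n\bar\mu-L|\lesssim\sqrt L$ window versus its complement fill in exactly what the paper leaves to the reader.
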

\begin{proof}
We use Proposition \ref{BS} with the distribution $p(\cdot)$ to be defined as
\begin{eqnarray}\label{p beta}
p^\beta(L,x^\perp):=P^\beta(\mathcal{L}_1=L;\hat{X}^\perp(\tau_1)=x^\perp),
\end{eqnarray}
for $L>0$ integer and $x^\perp\in \mathbb{Z}^{d-1}$.  Condition \eqref{g1} is satisfied by Proposition \ref
{exponential_moments}, $\mu_p$ is given by relation \eqref{mu} and, finally, \eqref{g2} is satisfied by the apparent non degeneracy of the distribution $P^\beta$. 
We then have 
\begin{eqnarray*}
\hat{B}(L;\hat{X}_L=x)&=&\sum_n p^\beta_n(L,x^\perp),
\end{eqnarray*}
and the result easily follows by inserting \eqref{local CLT bound} in the above summation.
\end{proof} 

\section{Inequality of Lyapounov norms for large $\beta$.}\label{large beta}

 In this section we prove part B of Theorem \ref{Thm 1}. The argument is along the lines of first passage percolation \cite{Kesten2}, similar to the case of Poissonian obstacles \cite{Szn1}, Theorem 1.4.  We consider the case when the distribution $\mathbb{P}$ satisfies $\text{essinf} \,(\omega)=0$ and $\mathbb{P}(\omega=0)<p_d$, with $p_d$ the critical probability for site percolation in $\mathbb{Z}^d$. Let $\phi(\beta):=-\log\mathbb{E}[\exp(-\beta\omega)]$.
We first have the following upper bound on $\beta^*_\lambda$.
\begin{eqnarray*}
\beta^*_\lambda&=&-\lim_{L\to\infty}\frac{1}{L}\log \mathbb{E}E\left[e^{-\sum_{n=1}^{T_L}(\lambda+\beta \omega(X_n))}\right]\\
&\leq& -\lim_{L\to\infty}\frac{1}{L}\log E\left[e^{-(\lambda +\phi(\beta))T_L }\right]\\
&\leq& C(\lambda+\phi(\beta)),
\end{eqnarray*}
where for the first inequality we used Fubini and the fact that $\mathbb{E}[\exp(-\beta \ell_{T_L}(x)\omega)]\geq (\mathbb{E}[\exp(-\beta\omega)])^ {\ell_{T_L}(x)}$. The second inequality is a routine to establish. Notice that since $\text{essinf}(\omega)=0$, we have that $\lim_{\beta\to\infty}\beta^{-1}\phi(\beta)=0$ and therefore it follows that for $\beta$ large, $\beta^*_\lambda=o(\beta)$.

We will now obtain a lower bound on the quenched Lyapounov norm. Consider $\omega^*_{d}$ the value such that 
$\mathbb{P}(\omega<\omega^*_{d})< p_{d}$, which is the critical probability of percolation in $d$ dimensions. By our assumption there exist such $\omega^*_d$, which is strictly positive.
Then by a first passage percolation argument (see Theorem 2.3 in \cite{Kesten2}, or Proposition 2.2 in \cite{Szn3})
 we have that for every $N>0$ there are constants $C_6,C_7$ such that
\begin{eqnarray}\label{fpp}
\mathbb{P}(\inf_{\mathcal{P}_N}\#\{n\leq N \colon \omega_{X_n}>\omega^*_{d}\} \leq C_6 N)\leq e^{-C_7 N},
\end{eqnarray}
where $\mathcal{P}_N$ is the set of all self avoiding paths $\{X_1,\dots,X_N\}$ of length N. Borel-Cantelli then implies that  for all large enough $N$ we have that  $\inf_{\mathcal{P}_N}\#\{n\leq N \colon \omega_{X_n}>\omega^*_{d}\} >C_6 N$. To use this in the estimate of the quenched Lyapounov norm, we notice that any path that starts at the origin makes at least $L$ steps before it reaches the hyperplane $\mathcal{H}_L$. We then have
\begin{eqnarray*}
\alpha_\lambda^*\geq -\lim\frac{1}{L}\log E\left[ e^{-\sum_{n=1}^{T_L} (\lambda+\beta\omega({X_n}))1_{\omega(X_n)>\omega^*_d}}\right]
\geq C_6(\lambda+\beta \omega_{d}^* ).
\end{eqnarray*}
Comparing this with the fact that $\beta^*_\lambda=o(\beta)$ for $\beta$ large, that we obtained above,  we arrive at the inequality of the Lyapounov norms, when $\beta$ is large.

\section{Inequality of Lyapounov norms in $d=2,3$ }\label{proof of Thm 1}
In this section we prove the first part of Theorem \ref{Thm 1}. The parameters $\lambda>0, \beta>0$ are fixed. To show that the annealed and the quenched Lyapounov norms are different  it is enough to show that
\begin{eqnarray}\label{fractional start}
\lim_{N\to \infty}\frac{1}{NL}\mathbb{E}\log \hat{B}_\omega(NL)<0,
\end{eqnarray}
recall that $\hat{B}_\omega(L):=e^{\overline{m}_B^aL}\overline{B}_\omega(L)$. This is evident, since the  left hand side of \eqref{fractional start} is equal to $\overline{m}_B^a-\overline{m}_B^q=\beta^*_\lambda-\alpha^*_\lambda$. To establish \eqref{fractional start}, we use the fractional moment method, which was developed in \cite{GLT}, \cite{L}. The starting point is to trivially write the left hand side of \eqref{fractional start} as 
$(\gamma NL)^{-1}\mathbb{E}\log \hat{B}^\gamma_\omega(NL)$, for a fixed $\gamma\in (0,1)$. Then using Jensen's inequality, it suffices to show that
\begin{eqnarray*}
\limsup_{N\to \infty}\frac{1}{\gamma NL}\log \mathbb{E}\hat{B}^\gamma_\omega(NL)<0.
\end{eqnarray*}
 The reason of considering a system of length $NL$ is that the fractional moment estimates are based on a coarse graining. The scale $L$ plays the role of a correlation length and its careful choice will be important. For the coarse graining we need to introduce the following skeletons:

Let $\mathcal{V}=\{0=v_0,v_1,\dots,v_N\} \subset \mathbb{Z}^{d}$, such that $v_i^{(1)}=i$, for $i=0,1,\dots N$. We define the skeletons 
\begin{eqnarray*}
\mathcal{I}^{C_1}_{\mathcal{V}}&:=&\bigcup_{i=0}^{N}\{ x\in \mathbb{Z}^{d}\colon x^{(1)}=v_{i}^{(1)}L, \,\,0\leq x^{(j)}-v^{(j)}_{i}C_1\sqrt{L}<C_1\sqrt{L},\,j=2,\dots,d\}\\
&:=&\cup_{i=0}^NI_{v_i}^{C_1},\\
\mathcal{J}_{\mathcal{V}}&:=&\bigcup_{i=0}^{N-1}\{x\in \mathbb{Z}^{d}\colon 0\leq x^{(1)}-v^{(1)}_{i}L<L, |x^{(j)}-v^{(j)}_{i}C_1\sqrt{L}|<C_3\sqrt{L} ,\,j=2,\dots,d\}\\
&:=& \cup_{i=0}^{N-1}J_{v_i}.
\end{eqnarray*}
We also define
\begin{eqnarray*}
I_0^{C_2}:=\{x\in \mathbb{Z}^d\colon x^{(1)}=0, |x^\perp|<C_2\sqrt{L}\}.
\end{eqnarray*}
The constants satisfy the relation $C_1<<C_2<<C_3$.
We now proceed as follows 
\begin{eqnarray}\label{fractional}
\frac{1}{\gamma}\frac{1}{NL}\log \mathbb{E}\left[\hat{B}_\omega(NL)^\gamma\right]   
&=&
\frac{1}{\gamma}\frac{1}{NL}\log \mathbb{E}\left[ \left(\sum_{\mathcal{V}} \hat{B}_\omega(NL;\cap_{i=1}^N \hat{X}_{iL}\in I^{C_1}_{v_i})\right)^\gamma\right]    \nonumber\\
&\leq&
\frac{1}{\gamma}\frac{1}{NL}\log \sum_{\mathcal{V}} \mathbb{E}\left[ \left(\hat{B}_\omega(NL;\cap_{i=1}^N \hat{X}_{iL}\in I^{C_1}_{v_i})\right)^\gamma\right].\qquad
\end{eqnarray}
Using H\"older's inequality we have the bound
\begin{eqnarray}\label{Holder}
&&\mathbb{E}\left[ \left(\hat{B}_\omega(NL;\cap_{i=1}^N \hat{X}_{iL}\in I^{C_1}_{v_i}\right)^\gamma\right] \nonumber\\
&&\qquad \qquad\leq
 \left(\mathbb{E}\left[g^{(d)}_{\mathcal{V}}(\omega)^{-\frac{\gamma}{1-\gamma}}\right]\right)^{1-\gamma}
 \left(\mathbb{E}\left[ g^{(d)}_{\mathcal{V}}(\omega) \,\hat{B}_\omega(NL;\cap_{i=1}^N \hat{X}_{iL}\in I^{C_1}_{v_i} )\right]\right)^\gamma,
 \end{eqnarray} 
where we define
\begin{eqnarray*}
g^{(d)}_{\mathcal{V}}(\omega)&:=&\prod_{i=0}^{N-1}g^{(d)}_{J_{v_i}}(\omega):=\exp\left(\sum_{i=0}^{N-1}F\left(G^{(d)}_{J_{v_i}}(\omega)\right)\right),\\
F(x)&:=&-K_1 1_{x>e^{K_2}},
\end{eqnarray*}
the constants $K_1,K_2$ will be chosen later on to be large enough. The index $d$ corresponds to the dimensions $2,3$, since we will require a different choice of the functions $G^{(d)}_{J_{v_i}}$ for each dimension. In particular, we will have the choices:
\begin{eqnarray}\label{d=2}
G^{(2)}_{J_{v_i}}(\omega)=\delta_L\sum_{x\in J_{v_i}}\overline\omega_x,
\end{eqnarray}
with $\delta_L:=-L^{3/4}$ and
\begin{eqnarray}\label{d=3}
G^{(3)}_{J_{v_i}}(\omega)=\sum_{y,z\in J_{v_i}}V_{y,z}\overline\omega_y\overline\omega_z,
\end{eqnarray}
with
\begin{eqnarray}\label{V}
V_{y,z}=\frac{1}{L(\log L)^{1/2}}\frac{1_{|y^\perp-z^\perp|<C_4\sqrt{|y^{(1)}-z^{(1)}|}}}{|y^{(1)}-z^{(1)}|+1}\,1_{y\neq z},
\end{eqnarray}
with the constant $C_4$ to be chosen large enough. The notation $\overline\omega_x:=\omega_x-\mathbb{E}[\omega_x]$, $x\in\mathbb{Z}^d$, will be used through out.
For shorthand we will be using the notation
\begin{eqnarray*}
d\mathbb{P}_{\mathcal{V}}&:=&g^{(d)}_{\mathcal{V}}(\omega) d\mathbb{P},\\
d\mathbb{P}_{J_{v_i}}&:=&g^{(d)}_{J_{v_i}}(\omega) d\mathbb{P},\qquad \text{for $i=1,2,\dots, N-1$},
\end{eqnarray*}
to denote the related measures. Notice that we have dropped the index $d$ from the notation of the $\mathbb{P}_{\mathcal{V}}$ and $\mathbb{P}_{J_{v_i}}$, in order to keep the notation light, since no confusion is likely to occur. 

Our goal will now be to use \eqref{Holder} into \eqref{fractional} in order to show that
\begin{eqnarray}\label{difference masses}
\overline{m}_B^a-\overline{m}_B^q\leq  \frac{1}{L}\limsup_{N\to\infty}\frac{1}{\gamma N}\log\sum_{\mathcal{V}}
     \mathbb{E}\left[ \left(\hat{B}_\omega(NL;\cap_{i=1}^N \hat{X}_{iL}\in I^{C_1}_{v_i})\right)^\gamma\right] <0,
\end{eqnarray}
when $L$ is chosen appropriately and large enough and so are the constants $K_1,K_2,C_1,C_2$, $C_3,C_4$.
To achieve this we will need a number of estimates. We start with the following estimate on the term involving the Radon-Nikodym derivative.
\begin{proposition}\label{Radon Nikodym}
For $K_2$ large enough, depending on $\mathbb{E}[\omega^2], K_1,C_3,C_4$ and for the above choices of the parameters $\delta_L$ and
$V_{y,z}$ we have that
\begin{eqnarray*}
\mathbb{E}[g^{(d)}_{\mathcal{V}}(\omega)^{-\frac{\gamma}{1-\gamma}}]<2^N,
\end{eqnarray*}
for $d=2,3$.
\begin{proof}
 It is easy to see, by the independence of the functions $G^{(d)}_{J_{v_i}}(\omega)$ for different $i$'s, that 
 \begin{eqnarray*}
\mathbb{E}[g^{(d)}_{\mathcal{V}}(\omega)^{-\frac{\gamma}{1-\gamma}}]=\left( \mathbb{E}\left[\exp\left(-\frac{\gamma}{1-\gamma}F( G^{(d)}_{J_0}(\omega))\right)\right]\right)^N.
\end{eqnarray*}
We proceed by estimating separately the expectation in the cases of $d=2,3$.
\vskip 2mm
CASE $d=2$:  We have the bound on the expectation
\begin{eqnarray}\label{2}
1+e^{\frac{\gamma}{1-\gamma}K_1}\mathbb{P}\left[ \delta_L\sum_{x\in J_0}\overline\omega_x>e^{K_2}\right]
&\leq& 1+e^{\frac{\gamma}{1-\gamma}K_1-2K_2}\,\mathbb{E}[\overline\omega^2] \, |J_0|\,\delta_L^2\\
&=&1+2C_3 e^{\frac{\gamma}{1-\gamma}K_1-2K_2}\,\mathbb{E}[\overline\omega^2] \nonumber\\
&<&2,\nonumber
\end{eqnarray}
for $K_2$ large enough.
\vskip 2mm
CASE $d=3$: We have the bound on the expectation
\begin{eqnarray*}
1+e^{\frac{\gamma}{1-\gamma}K_1}\mathbb{P}\left[ \sum_{y,z\in J_0}V_{y,z}\overline\omega_y\overline\omega_z>e^{K_2}\right]
&\leq&
1+e^{\frac{\gamma}{1-\gamma}K_1-2K_2} \mathbb{E}\left[ \left(\sum_{y,z\in J_0}V_{y,z}\overline\omega_y\overline\omega_z\right)^2\right]\\
&=&1+e^{\frac{\gamma}{1-\gamma}K_1-2K_2} \mathbb{E}[\overline\omega^2]^2 \sum_{y,z\in J_0}V^2_{y,z}\\
&\leq&1+CC_3^2C_4^2\,e^{\frac{\gamma}{1-\gamma}K_1-2K_2} \mathbb{E}[\overline\omega^2]^2\nonumber\\
&<&2,
\end{eqnarray*}
for $K_2$ large enough.
\end{proof}
\end{proposition}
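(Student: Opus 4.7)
The plan is to factor over disjoint slabs and reduce everything to a second-moment estimate. The sets $J_{v_0},\ldots,J_{v_{N-1}}$ are pairwise disjoint (each $J_{v_i}$ lives in the $\hat{e}_1$-slab $[iL,(i+1)L)$ since $v_i^{(1)}=i$), so the variables $G^{(d)}_{J_{v_i}}(\omega)$ depend on disjoint blocks of the i.i.d.\ field $\overline{\omega}$. By independence of these blocks and translation invariance of the disorder,
$$\mathbb{E}\bigl[g^{(d)}_{\mathcal{V}}(\omega)^{-\gamma/(1-\gamma)}\bigr]=\Bigl(\mathbb{E}\bigl[e^{-(\gamma/(1-\gamma))F(G^{(d)}_{J_0})}\bigr]\Bigr)^N,$$
so the problem reduces to showing the single-slab expectation is strictly less than $2$.

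Because $F(x)=-K_1\mathbf{1}_{x>e^{K_2}}$, the integrand equals $1+(e^{\gamma K_1/(1-\gamma)}-1)\mathbf{1}_{G>e^{K_2}}$, and Markov's inequality applied to $G^2$ gives
$$\mathbb{E}\bigl[e^{-(\gamma/(1-\gamma))F(G)}\bigr]\le 1+e^{\gamma K_1/(1-\gamma)-2K_2}\,\mathbb{E}[G^2].$$
Thus it suffices to bound $\mathbb{E}[(G^{(d)}_{J_0})^2]$ by a constant depending only on $C_3,C_4,\mathbb{E}[\overline{\omega}^2]$ and independent of $L$; one then chooses $K_2$ large enough to make the bracket at most $2$.

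For $d=2$, $G^{(2)}_{J_0}$ is a linear combination of independent centered variables, so $\mathbb{E}[(G^{(2)}_{J_0})^2]=\delta_L^2\,|J_0|\,\mathbb{E}[\overline{\omega}^2]$. Since $|J_0|\asymp C_3 L^{3/2}$ and $\delta_L$ has magnitude of order $L^{-3/4}$, this evaluates to $O(C_3\mathbb{E}[\overline{\omega}^2])$, independent of $L$. For $d=3$, $G^{(3)}_{J_0}$ is a centered quadratic form with vanishing diagonal $V_{y,y}=0$, so only the two Wick contractions $\{y'{=}y,z'{=}z\}$ and $\{y'{=}z,z'{=}y\}$ survive, giving $\mathbb{E}[(G^{(3)}_{J_0})^2]=2\,\mathbb{E}[\overline{\omega}^2]^2\sum_{y\neq z}V_{y,z}^2$. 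The key combinatorial estimate is $\sum_{y,z\in J_0}V_{y,z}^2=O(C_3^2C_4^2)$: for fixed $y$, the number of $z\in J_0$ with height gap $k=|y^{(1)}-z^{(1)}|$ and $|y^\perp-z^\perp|<C_4\sqrt{k}$ is $O(C_4^2 k)$, so the inner sum is
$$\frac{1}{L^2\log L}\sum_{k=0}^{L}\frac{C_4^2 k}{(k+1)^2}\;\lesssim\;\frac{C_4^2}{L^2},$$
and summing over the $O(C_3^2 L^2)$ choices of $y\in J_0$ yields the claim.

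The main obstacle is the $d=3$ bookkeeping. The prefactor $(L^2\log L)^{-1}$ in $V_{y,z}$ is tuned precisely to absorb the logarithmic divergence $\sum_k k/(k+1)^2\sim\log L$ coming from the transverse window $C_4\sqrt{k}$; without it the estimate would grow like $\log L$ and the argument would fail. The exclusion $y\neq z$ is also essential, as it removes all would-be $\overline{\omega}_y^4$ contributions and keeps the entire analysis inside the hypothesis $\mathbb{E}[\omega^2]<\infty$.
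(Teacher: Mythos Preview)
Your proof is correct and follows essentially the same approach as the paper: factor over the disjoint slabs by independence, reduce to a single-slab expectation, bound it by $1+e^{\gamma K_1/(1-\gamma)-2K_2}\mathbb{E}[G^2]$ via Chebyshev/Markov, and then compute $\mathbb{E}[G^2]$ as a constant independent of $L$. Your write-up actually supplies more detail than the paper on the $d=3$ combinatorial estimate $\sum_{y,z}V_{y,z}^2=O(C_3^2C_4^2)$ (the paper simply asserts it), and your remark that the prefactor $(L^2\log L)^{-1}$ is tuned to absorb the $\sum_k k/(k+1)^2\sim\log L$ divergence is a useful piece of intuition.
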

We continue by estimating the second term of \eqref{Holder}. The first part of this estimate is identical for both $d=2,3$.

\begin{proposition}\label{2nd Holder}
For any $\epsilon >0$ we can choose $L$ large enough such that
\begin{eqnarray}\label{Holder 2 ineq}
&&\sum_{\mathcal{V}}\left(\mathbb{E}_{\mathcal{V}}\left[ \,\hat{B}_\omega(NL;\cap_{i=1}^N \hat{X}_{iL}\in I^{C_1}_{v_i} )\right]\right)^\gamma\nonumber\\
&&\qquad\qquad\qquad  \leq  \left(C \sum_{v}\left( \max_{x\in I^{C_2}_0} \, \mathbb{E}_{J_0} \hat{B}_{x,\omega}(L;\hat{X}_L\in I^{C_1}_v)\right)^\gamma +\epsilon\right)^N,
\end{eqnarray}
for a positive constant $C$.
\end{proposition}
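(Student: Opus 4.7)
\textbf{Proof plan for Proposition \ref{2nd Holder}.} The bound is a standard coarse-graining/fractional-moment estimate. I would decompose the bridge of span $NL$ into $N$ pieces according to the skeleton $\mathcal{V}$ and use independence of the disorder across the boxes $J_{v_i}$ to factorize the annealed weighted expectation, then sum over $\mathcal{V}$ and take the $\gamma$-th power using the subadditivity inequality $(a+b)^{\gamma}\le a^{\gamma}+b^{\gamma}$ for $\gamma\in(0,1)$.

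\emph{Step 1 (good/bad split).} For a skeleton $\mathcal{V}$, let $\mathcal{G}_{\mathcal{V}}$ be the event that, for every $i$, the part of the path reaching level $(i+1)L$ for the first time stays inside the transverse tube of $J_{v_i}$, i.e.\ $|X_n^\perp-v_i^\perp C_1\sqrt{L}|<C_3\sqrt{L}$ for $T_{iL}\le n\le T_{(i+1)L}$. Write $\hat{B}_\omega(NL;\cap_i\hat X_{iL}\in I^{C_1}_{v_i})=\hat{B}_\omega(\cdots;\mathcal{G}_{\mathcal{V}})+\hat{B}_\omega(\cdots;\mathcal{G}_{\mathcal{V}}^c)$, and apply $(a+b)^\gamma\le a^\gamma+b^\gamma$.

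\emph{Step 2 (factorization on $\mathcal{G}_{\mathcal{V}}$).} On $\mathcal{G}_{\mathcal{V}}$ the $i$-th segment is contained in the box $J_{v_i}$, so both the random potential encountered by this segment and the tilting density $g^{(d)}_{J_{v_i}}(\omega)$ depend only on $\omega\restriction J_{v_i}$. Since the $J_{v_i}$ are spatially disjoint (they sit in different longitudinal slabs $[iL,(i+1)L)$) and $\omega$ is i.i.d., the annealed expectation factorizes segment by segment. Using the Markov property of the walk at the successive first-passage times $T_{iL}$ and bounding the starting point of each piece by a maximum over $I^{C_1}_{v_i}$, I obtain
\[
\mathbb{E}_{\mathcal{V}}\bigl[\hat{B}_\omega(NL;\cap_i\hat X_{iL}\in I^{C_1}_{v_i},\mathcal{G}_{\mathcal{V}})\bigr]\le \prod_{i=0}^{N-1}\max_{x\in I^{C_1}_{v_i}}\mathbb{E}_{J_{v_i}}\bigl[\hat{B}_{x,\omega}(L;\hat X_L\in I^{C_1}_{v_{i+1}})\bigr].
\]
Translation invariance in the transverse variables reduces each factor to a quantity depending only on $v_{i+1}-v_i$, and after raising to the $\gamma$-th power the sum over $\mathcal{V}$ telescopes into a product of identical sums over the increments, yielding the main term $\bigl(\sum_v(\max_{x\in I^{C_1}_0}\mathbb{E}_{J_0}\hat{B}_{x,\omega}(L;\hat X_L\in I^{C_1}_v))^\gamma\bigr)^N$. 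Enlarging the starting box from $I^{C_1}_0$ to $I^{C_2}_0$ absorbs the constant $C$.

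\emph{Step 3 (control of $\mathcal{G}_{\mathcal{V}}^c$).} Since $g^{(d)}_{\mathcal{V}}\le 1$, I drop the tilt and estimate the annealed probability that some segment exits the transverse tube of width $C_3\sqrt{L}$. By the annealed local limit theorem (Proposition \ref{local CLT}) together with the exponential transverse moments (Proposition \ref{exponential_moments}), this probability is at most $e^{-cC_3^2}$ per segment. Summing over $v\in\mathcal{V}$ after taking the $\gamma$-th power and a union bound over which segment is bad, the total bad contribution is bounded by $\varepsilon^N$ with $\varepsilon=e^{-c'\gamma C_3^2}$, which can be made as small as desired by choosing $C_3$ large.

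\emph{Main obstacle.} The subtle point is Step 2: although the first-passage time $T_{iL}$ is not a breakpoint in general, one must justify that, on the good event $\mathcal{G}_{\mathcal{V}}$, the annealed factorization still holds cleanly. This requires verifying that the $i$-th segment genuinely sees $\omega$ only inside $J_{v_i}$ (which uses the bridge property together with the transverse confinement) and that the dependence of the Radon--Nikodym tilt on $\omega\restriction J_{v_i}$ respects this localization — a fact that is immediate in $d=2$ from \eqref{d=2} but requires the compactly supported kernel \eqref{V} in $d=3$. Balancing $C_1\ll C_2\ll C_3$ to simultaneously keep the factorization sharp and the bad contribution small is the crucial tuning that makes \eqref{difference masses} strictly negative in the subsequent sections.
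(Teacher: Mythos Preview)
Your Step 2 has a genuine gap. The good event $\mathcal{G}_{\mathcal{V}}$ you introduce controls only the \emph{transverse} coordinate of the $i$-th segment; it says nothing about the longitudinal one. The segment $(X_n)_{T_{iL}\le n\le T_{(i+1)L}}$ is not a bridge of span $L$: the global constraint $\text{Br}(T_{NL};NL)$ only forces $X_n^{(1)}\ge 0$, not $X_n^{(1)}\ge iL$. Hence even on $\mathcal{G}_{\mathcal{V}}$ the $i$-th segment can backtrack below level $iL$ and revisit sites already visited by earlier segments. The random potential along this backtracking part is shared with previous factors, so the disorder expectation $\mathbb{E}_{\mathcal{V}}$ does \emph{not} factorize over $i$ as you claim, and the displayed inequality in Step~2 is unjustified. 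Your ``Main obstacle'' paragraph flags this issue but resolves it incorrectly: the bridge property together with transverse confinement does \emph{not} confine the $i$-th segment to $J_{v_i}$.

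The paper's argument is different in exactly this place. It does not try to restore independence via a good/bad split. Instead, for each factor $E^\kappa_{x_i}[\,\cdot\,;\hat X_L=x_{i+1}]$ it decomposes the path at the \emph{last} time $M$ the walk is on $\mathcal{H}_{iL}$, bounds the potential on the initial wandering part $[0,M]$ crudely by $-\beta\mathbb{E}[\omega]$ (paying the finite constant $\sum_M e^{-\lambda M}P(X_M^{(1)}=0)=\mu_0^{-1}$), and observes that the remaining part starting from $\tilde x:=X_M\in\mathcal{H}_{iL}$ is a genuine bridge $\overline{B}_{\tilde x,\omega}(L;\hat X_L=x_{i+1})$, which is now independent of the disorder to the left of $\mathcal{H}_{iL}$. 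This is what produces the product in \eqref{fractional_prod}. The role of $C_2$ is then different from what you describe: it is the cutoff for the location of this last-hitting point $\tilde x$. When $|\tilde x^\perp|\le C_2\sqrt L$ one takes the max over $I_0^{C_2}$ and gets the main term \eqref{first_term_bound1}; when $|\tilde x^\perp|>C_2\sqrt L$ the random-walk Green's function decay gives the $\epsilon$ term \eqref{second_term_bound3}. Your Step~3 is essentially absent from the paper's proof of this proposition; the transverse confinement to $J_0$ only enters later, inside the proof of Proposition~\ref{prop 9} (Lemmas~\ref{first split}--\ref{second split}).
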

\begin{proof}
We start by
\begin{eqnarray}\label{decomposition}
\mathbb{E}_{\mathcal{V}}\hat{B}_\omega(NL;\cap_{i=1}^N \hat{X}_{iL}\in I^{C_1}_{v_i})&=&\mathbb{E}_{\mathcal{V}} \sum_{x_i\in I^{C_1}_{v_i};i=1,2,\dots,N} e^{\overline{m}_B^aL} E^\kappa\left[ e^{-\beta\sum_{x}\overline\omega_x\local_{T_L}(x)};\hat{X}_{L}=x_{1},\,\text{Br}(L)\right]\nonumber\\
       &&\qquad\qquad\times\prod_{i=1}^{N-1} e^{\overline{m}_B^aL} E_{x_i}^\kappa\left[ e^{-\beta\sum_{x}\overline\omega_x\local_{T_L}(x)};\hat{X}_{L}=x_{i+1}\,\right].
\end{eqnarray}
The terms of the product are not independent and a priori we cannot interchange the product and the $\mathbb{E}_{\mathcal{V}}$ . We can recover the independence by looking at when is the last time the path starting from $x_i\in I^{C_1}_{v_i}$ lies on the hyperplane $\mathcal{H}_{iL}:=\{x\colon x\cdot \hat{e}_1=iL\}$ and then bound below the potential $\beta\overline{\omega}$ of the sites visited by this segment of the walk by $-\beta\mathbb{E}[\omega]$. We then have
\begin{eqnarray}\label{maximum}
&&e^{\overline{m}_B^aL} E^\kappa_{x_i}\left[ e^{-\beta\sum_{x}\overline\omega_x\local_{T_L}(x)};\hat{X}_{L}=x_{i+1}\,\right] \nonumber\\
&&=\sum_{\tilde{x}\in \mathcal{H}_{iL}}\sum_{M=1}^\infty
e^{\overline{m}_B^aL} E_{x_i}^\kappa\left[ e^{-\beta\sum_{x}\overline\omega_x\local_{T_L}(x)};X_M=\tilde{x}\,\right] \overline{B}_{\tilde{x},\omega}(L;\hat{X}_{L}=x_{i+1})\nonumber\\
&&\leq e^{\overline{m}_B^aL}\sum_{\tilde{x}\in \mathcal{H}_{iL}}\sum_{M=1}^\infty e^{\beta\mathbb{E}[\omega] M}P^\kappa_{x_i}(X_M=\tilde{x})\, \overline{B}_{\tilde{x},\omega}(L;\hat{X}_L=x_{i+1}), 
\end{eqnarray}
for $i=1,\dots,N-1$. Since $x_i,\tilde{x}\in \mathcal{H}_{iL}$, we can use the identity  $e^{\beta\mathbb{E}[\omega] M}P^\kappa_{x_i}(X_M=\tilde{x})=e^{-\lambda M}P_{x_i}(X_M=\tilde{x})=e^{-\lambda M}P(X_M^{(1)}=0)$ into \eqref{maximum}, to get that \eqref{decomposition} is bounded by
\begin{eqnarray*}
   \prod_{i=0}^{N-1} e^{\overline{m}_B^aL} \max_{x\in I^{C_1}_{v_i}} \sum_{\tilde{x}\in \mathcal{H}_{iL}}\sum_{M=1}^\infty e^{-\lambda M} P_x(X_M=\tilde{x})\,\mathbb{E}_{J_{v_i}} \overline{B}_{\tilde{x},\omega}(L;\hat{X}_L\in I^{C_1}_{v_{i+1}}),
\end{eqnarray*}
and therefore the left hand side of \eqref{Holder 2 ineq} can be bounded by
\begin{eqnarray}\label{fractional_prod}
&&\\
\,\,\left( \sum_{v}  \left( e^{\overline{m}_B^aL}\max_{x\in I^{C_1}_0} 
\sum_{\tilde{x}\in \mathcal{H}_{0}}\sum_{M=1}^\infty e^{-\lambda M} P_x(X_M=\tilde{x})\,\mathbb{E}_{J_0} \overline{B}_{\tilde{x},\omega}(L;\hat{X}_L\in I^{C_1}_v)\right)^\gamma \,\right)^N.\nonumber
\end{eqnarray}
 To further estimate this, we decompose the term inside the "$\max_{x\in I^{C_1}_0}$" as follows
\begin{eqnarray}\label{decomposition2}
&&\sum_{M=1}^\infty\left( \sum_{\tilde{x}\in\mathcal{H}_{0}\colon |\tilde{x}|\leq C_2\sqrt{L}} + \sum_{\tilde{x}\in\mathcal{H}_{0}\colon  |\tilde{x}|>C_2\sqrt{L}} \right)
        e^{-\lambda M} P_x(X_M=\tilde{x})\, \mathbb{E}_{J_0} \hat{B}_{\tilde{x},\omega}(L;\hat{X}_L\in I^{C_1}_v).
\end{eqnarray}
 Notice that since   $ g^{(d)}_{J_0}(\omega)\leq 1$ we have that
  \begin{eqnarray}\label{bound1}
   \mathbb{E}_{J_0} \hat{B}_{\tilde{x},\omega}(L;\hat{X}_L\in I^{C_1}_v )
    &\leq& 
    \mathbb{E} \hat{B}_{\tilde{x},\omega}(L;\hat{X}_L\in I^{C_1}_v)\nonumber\\
    &=&
    \mathbb{E} \hat{B}_\omega(L;\hat{X}_L\in I^{C_1}_v-\tilde{x}).
\end{eqnarray}
Using this, \eqref{decomposition2} and the fractional inequality $(a+b)^\gamma\leq a^\gamma+b^\gamma, \gamma\leq 1$, we get that the part of \eqref{fractional_prod} inside the $N$ power is bounded by the sum of the terms
\begin{eqnarray}\label{first_term}
\sum_{v}\left(  \max_{x\in I^{C_1}_0}\sum_{M=1}^\infty \,\sum_{\tilde{x}\in\mathcal{H}_{0}\colon |\tilde{x}|\leq C_2\sqrt{L}} e^{-\lambda M}P_x(X_M=\tilde{x})\, \mathbb{E}_{J_0} \hat{B}_{\tilde{x},\omega}(L;\hat{X}_L\in I^{C_1}_v)\right)^\gamma,
\end{eqnarray}
and
\begin{eqnarray}\label{second_term}
\sum_{v}\left(  \max_{x\in I^{C_1}_0} \sum_{M=1}^\infty \,\sum_{ \tilde{x}\in\mathcal{H}_{0}\colon |\tilde{x}|>C_2\sqrt{L}} e^{-\lambda M} P_x(X_M=\tilde{x})\, \mathbb{E} \hat{B}_\omega(L;\hat{X}_L\in I^{C_1}_v-\tilde{x})\right)^\gamma.
\end{eqnarray}
BOUND ON \eqref{first_term}. Clearly, \eqref{first_term} is bounded by
\begin{eqnarray*}
\sum_{v}\left(  \max_{\tilde{x}\in I^{C_2}_0}\mathbb{E}_{J_0} \hat{B}_{\tilde{x},\omega}(L;\hat{X}_L\in I^{C_1}_v) \max_{x\in I^{C_1}_0}\sum_{M=1}^\infty \,\sum_{\tilde{x}\in\mathcal{H}_{0}\colon |\tilde{x}|\leq C_2\sqrt{L}} e^{-\lambda M} P_x(X_M=\tilde{x})\, \right)^\gamma.
\end{eqnarray*}
 Since $x,\tilde{x}\in\mathcal{H}_0$, it holds that 
\begin{eqnarray*}
\sum_{M=1}^\infty \sum_{\tilde{x}\in\mathcal{H}_{0}\colon |\tilde{x}|\leq C_2\sqrt{L}} e^{-\lambda M}P_x(X_{M}=\tilde{x})&\leq&
\sum_{M=1}^\infty e^{-\lambda M}P(X_M^{(1)}=0) :=\mu_0^{-1}.
\end{eqnarray*}
 Therefore, it follows that \eqref{first_term} is bounded by
\begin{eqnarray}\label{first_term_bound1}
&&\sum_{v}\left( \mu_0^{-1}\max_{x\in I^{C_2}_0} \, \mathbb{E}_{J_0} \hat{B}_{x,\omega}(L;\hat{X}_L\in I^{C_1}_v)\right)^\gamma\nonumber\\
&&\qquad\qquad\qquad= C \sum_{v}\left( \max_{x\in I^{C_2}_0} \, \mathbb{E}_{J_0} \hat{B}_{x,\omega}(L;\hat{X}_L\in I^{C_1}_v)\right)^\gamma,
\end{eqnarray}
where $C:=\mu_0^{-\gamma}$.

 BOUND ON \eqref{second_term}. We first use the usual fractional inequality to pass the $\gamma$ power inside the summations and then notice that the summation over $v\in \mathbb{Z}^{d}$ effectively eliminates the dependence on $\tilde{x}$ in the expectation. \eqref{second_term} is then bounded by
\begin{eqnarray}\label{second_term_bound2}
 &&\sum_{v}\left(  \mathbb{E} \hat{B}_\omega(L;\hat{X}_L\in I^{C_1}_v) \right)^\gamma \sum_{M=1}^\infty \sum_{x^\perp\in \mathbb{Z}^{d-1}\colon |x^\perp|>C_2\sqrt{L}} e^{-\gamma\lambda M} P(X^{(1)}_{M}=0)^\gamma P(X^\perp_M=x^\perp)^\gamma.
\end{eqnarray}
The process $(X^\perp_M)_{M\geq 1}$ is a simple random walk on $\mathbb{Z}^{d-1}$ and therefore we can use standard estimates to get $P(X^\perp_M=x^\perp)\leq C/ M^{(d-1)/2}e^{-C|x^\perp|^2/M}$.
We therefore have the simple computation
\begin{eqnarray*}
&&\sum_{M=1}^\infty \sum_{x^\perp\in \mathbb{Z}^{d-1}\colon |x^\perp|>C_2\sqrt{L}} e^{-\gamma \lambda M}P(X^{(1)}_{M}=0)^\gamma P(X^\perp_M=x^\perp)^\gamma \\
&&\qquad\qquad\leq C \sum_{M=1}^\infty \sum_{|x^\perp|>C_2\sqrt{L}}e^{-\gamma\lambda M} \frac{1}{ M^{\gamma (d-1)/2}} e^{-\gamma C |x^\perp|^2/M}\\
&&\qquad\qquad\leq C\left(\sum_{M=1}^{\sqrt{L}}+\sum_{M=\sqrt{L}+1}^\infty\right)e^{-\gamma\lambda M/2} e^{-\gamma CC_2^2 L/M}\leq Ce^{-\gamma C \sqrt{L}},
\end{eqnarray*}
and therefore \eqref{second_term_bound2} can be bounded by
\begin{eqnarray}\label{second_term_bound3}
Ce^{-\gamma C\sqrt{L}} \sum_{v}\left(   \mathbb{E} \hat{B}_\omega(L;\hat{X}_L\in I^{C_1}_v)\right)^\gamma<\epsilon,
\end{eqnarray}
with the last inequality valid for $L$ large enough, depending on $\epsilon$. Notice that here we used the fact that Proposition \ref{local CLT} guarantees the
uniform boundedness of $ \sum_{v}\left(   \mathbb{E} \hat{B}_\omega(L;\hat{X}_L\in I^{C_1}_v)\right)^\gamma$ in $L$.

The combination of \eqref{second_term_bound3} and \eqref{first_term_bound1} completes the proof of the proposition.
\end{proof}
The next proposition is the last step towards the proof of Theorem \ref{Thm 1}, part A.
\begin{proposition}\label{prop 9}
Consider $L$ chosen as 
\begin{eqnarray}\label{L,d=2 state}
L:=\left(\frac{e^{2K_2}}{|\overline\phi'(\beta)|}\right)^4,&&\qquad d=2,
\end{eqnarray} 
and
\begin{eqnarray}\label{l,d=3 state}
L:=\exp\left[\left(\frac{e^{2K_2}}{|\overline\phi'(\beta)|^2}\right)^2\right],\qquad d=3.
\end{eqnarray}
For any $\epsilon>0$ we can choose  $C_3$ large,
 $K_1$ large enough, depending on $\epsilon$ and $K_2$ large enough depending on $C_3, C_4, \mathbb{E}[\omega^2]$, $\epsilon$, $\beta$ (the dependence on $\beta$ is such that the length scale $L$ is large enough), such that
\begin{eqnarray}\label{prop 9 sum}
\sum_{v}\left( \max_{x\in I^{C_2}_0} \, \mathbb{E}_{J_0} \hat{B}_{x,\omega}(L;\hat{X}_L\in I^{C_1}_v)\right)^\gamma<\epsilon.
\end{eqnarray} 
\end{proposition}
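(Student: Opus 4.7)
The plan is to bound $\mathbb{E}_{J_0}\hat{B}_{x,\omega}(L;\hat{X}_L\in I^{C_1}_v)$ by splitting according to the indicator event defining $F$, controlling each piece separately, then summing over $v$ via Proposition \ref{local CLT}. Write $A_v:=\{\hat{X}_L\in I^{C_1}_v\}$; since $g^{(d)}_{J_0}(\omega)=e^{-K_1}\mathbf{1}_{\{G^{(d)}_{J_0}>e^{K_2}\}}+\mathbf{1}_{\{G^{(d)}_{J_0}\le e^{K_2}\}}$, I split $\mathbb{E}_{J_0}\hat{B}_{x,\omega}(L;A_v)=(A)+(B)$ with
$$(A):=e^{-K_1}\mathbb{E}[\hat{B}_{x,\omega}(L;A_v);G^{(d)}_{J_0}>e^{K_2}],\qquad (B):=\mathbb{E}[\hat{B}_{x,\omega}(L;A_v);G^{(d)}_{J_0}\le e^{K_2}].$$
The first summand is handled by dropping the indicator: $(A)\le e^{-K_1}\hat{B}_x(L;A_v)$, whose contribution in \eqref{prop 9 sum} is at most $e^{-\gamma K_1}\sum_v(\hat{B}_x(L;A_v))^\gamma$, made arbitrarily small by choosing $K_1$ large (the sum is bounded uniformly in $L$ by Proposition \ref{local CLT}, see the final paragraph).

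For $(B)$, I apply Fubini to exchange disorder and path integration, obtaining
$$(B)=e^{\overline{m}_B^a L}E^\kappa_x\bigl[\mathbf{1}_{\Br(T_L;L)\cap A_v}\,e^{-\overline\Phi_\beta(T_L)}\,\mathbb{P}^{\mathrm{tilt}}(G^{(d)}_{J_0}\le e^{K_2})\bigr],$$
where $\mathbb{P}^{\mathrm{tilt}}$ is the path-dependent probability obtained by tilting $\mathbb{P}$ by the normalized density $e^{-\beta\sum_y\overline\omega_y\ell_{T_L}(y)}$. Because $\overline\omega$ is i.i.d.\ the tilt factorizes over $y$: at each visited site $\mathbb{E}^{\mathrm{tilt}}[\overline\omega_y]=\overline\phi'(\beta\ell(y))$, while non-visited sites retain their original law. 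Observe that inside a bridge of span $L$ the walk performs first visits to each of the levels $0,1,\dots,L$, so its range satisfies $|R|\ge L$ deterministically. Let $\mathcal{A}$ denote the path event ``stays inside the slab $J_0$''; standard Gaussian concentration of the bridge's transverse fluctuations makes $\hat{B}_x(L;A_v\cap\mathcal{A}^c)$ exponentially smaller than $\hat{B}_x(L;A_v)$ for $C_3$ large. It therefore suffices to show $\mathbb{P}^{\mathrm{tilt}}(G^{(d)}_{J_0}\le e^{K_2})\le\epsilon'$ uniformly over paths in $\mathcal{A}$, which I do by Chebyshev.

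Since $\overline\phi$ is concave with $\overline\phi'(0)=0$, one has $|\overline\phi'(\beta\ell(y))|\ge|\overline\phi'(\beta)|$ at each visited $y$. For $d=2$ this yields $\mathbb{E}^{\mathrm{tilt}}[G^{(2)}_{J_0}]\ge L^{3/4}|R||\overline\phi'(\beta)|\ge L^{7/4}|\overline\phi'(\beta)|$ while $\mathrm{Var}^{\mathrm{tilt}}(G^{(2)}_{J_0})\le L^{3/2}|J_0|\,\mathbb{E}[\overline\omega^2]\lesssim C_3L^3$; then the choice \eqref{L,d=2 state} forces the mean to exceed both $e^{K_2}$ and the standard deviation by a factor $\gtrsim e^{2K_2}$, giving $\mathbb{P}^{\mathrm{tilt}}(G^{(2)}_{J_0}\le e^{K_2})\lesssim C_3 e^{-4K_2}$. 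For $d=3$, independence gives $\mathbb{E}^{\mathrm{tilt}}[\overline\omega_y\overline\omega_z]=\overline\phi'(\beta\ell(y))\overline\phi'(\beta\ell(z))$ for $y\ne z$, whence $\mathbb{E}^{\mathrm{tilt}}[G^{(3)}_{J_0}]\ge|\overline\phi'(\beta)|^2\sum_{y\ne z\text{ visited}}V_{y,z}$. The diffusive local-CLT estimate from Theorem \ref{BS} guarantees that typical pairs of visited sites satisfy the indicator constraint $|y^\perp-z^\perp|<C_4\sqrt{|y^{(1)}-z^{(1)}|}$; summing the harmonic weights $1/(s+1)$ with $s=|y^{(1)}-z^{(1)}|$ over such pairs (using that there is $O(1)$ visited site per level) produces $\sum_{y\ne z\text{ visited}}V_{y,z}\gtrsim(\log L)^{1/2}$, the $(\log L)^{-1/2}$ normalization trading against the harmonic factor $\sum_s 1/s\sim\log L$. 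Meanwhile $\mathrm{Var}^{\mathrm{tilt}}(G^{(3)}_{J_0})\lesssim\sum_{y,z\in J_0}V_{y,z}^2\lesssim C_3^2C_4^2$ is bounded; with \eqref{l,d=3 state} one has $(\log L)^{1/2}=e^{2K_2}/|\overline\phi'(\beta)|^2$, so the tilted mean is $\ge e^{2K_2}$ and Chebyshev again gives the desired concentration.

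Combining, $(A)$ and $(B)$ both yield a bound of the form $(\text{small})\cdot\hat{B}_x(L;A_v)$ uniformly in $x\in I^{C_2}_0$. Raising to the $\gamma$-th power and summing over $v$, Proposition \ref{local CLT} together with the Gaussian decay in $|v^\perp|$ controls $\sum_v(\max_{x\in I^{C_2}_0}\hat{B}_x(L;A_v))^\gamma$ by a constant independent of $L$, so the left-hand side of \eqref{prop 9 sum} is at most $(\text{small})^\gamma\cdot C<\epsilon$ for the chosen parameters. I anticipate the main obstacle to be the $d=3$ tilted-mean estimate: obtaining the $(\log L)^{1/2}$ enhancement of $\mathbb{E}^{\mathrm{tilt}}[G^{(3)}_{J_0}]$ requires a quantitative description of typical pairs of sites visited by the drifted walk, which is precisely the purpose of the Green's-function-like kernel $V_{y,z}$; without this enhancement the tilted mean would not exceed $e^{K_2}$ at the scale $L$ dictated by \eqref{l,d=3 state} and the Chebyshev step would fail.
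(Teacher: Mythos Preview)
Your overall architecture matches the paper's: split according to the event $\{G^{(d)}_{J_0}>e^{K_2}\}$ defining $F$, pass to the path-tilted law $\mathbb{P}_X$ on the complementary event, and control $\mathbb{P}_X(G^{(d)}_{J_0}\le e^{K_2})$ by Chebyshev after showing the tilted mean is large. Two real gaps remain.

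\textbf{The $\mathcal{A}^c$ contribution is not uniformly negligible in $v$.} Your claim that $\hat{B}_x(L;A_v\cap\mathcal{A}^c)$ is ``exponentially smaller than $\hat{B}_x(L;A_v)$'' fails once $|v|$ is large enough that $I^{C_1}_v$ lies near or outside the boundary of $J_0$: for such $v$ the path \emph{must} exit the slab to reach $I^{C_1}_v$, so $\hat{B}_x(L;A_v\cap\mathcal{A}^c)=\hat{B}_x(L;A_v)$ and your ``small'' multiplicative factor is $1$. Consequently your final step, bounding $\sum_v(\cdot)^\gamma$ by $(\text{small})^\gamma\sum_v(\hat B_x(L;A_v))^\gamma$, does not go through as stated. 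The paper fixes this by first splitting the $v$-sum at a threshold $R$: for $|v|\ge R$ one uses Proposition~\ref{local CLT} directly (no need for the tilt), and only the finitely many terms $|v|<R$ are bounded individually by the tilt-plus-Chebyshev argument, each made smaller than $\epsilon/(4R)$. With that splitting your argument for $\hat B_x(L;\mathcal{A}^c)$ only needs the absolute bound (Lemma~\ref{second split}), not a relative one.

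\textbf{The $d=3$ tilted variance has a second term you omitted.} Under $\mathbb{P}^{\mathrm{tilt}}$ the variables $\overline\omega_y$ are independent but \emph{not} centered at visited sites (their mean is $\overline\phi'(\beta\ell(y))$). Writing $\overline\omega_y=m_y+(\overline\omega_y-m_y)$ in the quadratic form produces, besides the pure-fluctuation piece with variance $\sim\sum V_{y,z}^2$ that you kept, a cross piece $2\sum_z(\sum_y V_{y,z}m_y)(\overline\omega_z-m_z)$ whose variance is $\sim\sum_z(\sum_y V_{y,z}\mathbf 1_{\ell(y)>0})^2$. This term is path-dependent and is \emph{not} bounded by $\sum V_{y,z}^2$; in the paper it is controlled separately by showing $e^{\overline m_B^aL}E^\kappa_x[e^{-\overline\Phi_\beta(T_L)}\sum_z(\sum_y V_{y,z}\mathbf 1_{\ell(y)>0})^2;\mathrm{Br}(L)]\le CC_4^2$, using that under $P^\beta$ the range has $O(L)$ sites. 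You also correctly flag the tilted-mean lower bound as the delicate point; the paper handles it not by a direct pair count but by dominating from below by the explicit functional $\mathbb Y_L$ of first/last hitting points and proving (Lemma~\ref{delta}) that $\mathbb Y_L\ge D(L)/2$ with high annealed-bridge probability. Your heuristic ``$O(1)$ visited site per level plus diffusive pairing'' is the right picture but does not by itself give the uniform-in-path bound needed for the Chebyshev step; one needs the first-moment argument of \eqref{Y upper}--\eqref{Y lower}.
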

The proof of this proposition requires different choices of the parameters in dimensions $2,3$ and is presented in the following subsections respectively. Before embarking into the proof of Proposition \ref{prop 9} we will show how Propositions \ref{Radon Nikodym}, \ref{2nd Holder}, \ref{prop 9} can be used to conclude the proof of Theorem \ref{Thm 1}, part A.
\vskip 4mm
{\bf Proof of Theorem \ref{Thm 1}, part A.} Using \eqref{Holder} into \eqref{difference masses} and using Propositions \ref{Radon Nikodym},\ref{2nd Holder}, \ref{prop 9} we have that 
\begin{eqnarray*}
\overline{m}_B^a-\overline{m}_B^q\leq \frac{1}{L} \gamma^{-1}\log (2^{(1-\gamma)}(C+1)\epsilon)<0,
\end{eqnarray*}
by choosing $\epsilon$ small enough.
Notice that the choice of $L$ in \eqref{L,d=2 state} and \eqref{l,d=3 state} provides also a bound on the gap between the annealed and quenched norms. When $\beta\sim 0$ is small, then $\overline\phi'(\beta)\sim -\beta \mathbb{E}[\overline\omega^2]$ and therefore the gap is bounded below by $O(\beta^{4})$ in $d=2$ and $O(\exp(-\beta^{-4}))$ in $d=3$. When $\beta$ is large the choice of $L$ being large imposes that $K_2$ must be chosen so that $\exp(2K_2):=\exp(2K'_2)|\overline\phi'(\beta)|$, in $d=2$, and $\exp(2K_2):=\exp(2K'_2)|\overline\phi'(\beta)|^2$, in $d=3$, with $K'_2$ large and therefore the bound on the gap in this case is $O(1)$. \hfill $\Box$

 \vskip 4mm
For the proof of Proposition \ref{prop 9} we will need the following notation
\begin{eqnarray}\label{notation 1}
&&\mathcal{X}_L^{C_3}=\{(X_.)\colon (X_n)_{1\leq n\leq T_L}\subset J_0\},
\end{eqnarray}
\begin{eqnarray}\label{notation 2}
&&\mathcal{B}_{v,L}^{C_3}:=\mathcal{X}_L^{C_3}\cap \text{Br}(L)\cap \{\hat{X}_L\in I^{C_1}_v\},
\end{eqnarray}
\begin{eqnarray}\label{notation 3}
&&\mathcal{B}_L^{C_3}:=\mathcal{X}_L^{C_3}\cap \text{Br}(L)
\end{eqnarray}

\vskip 2mm
\subsection{ Proof of Proposition \ref{prop 9} in dimension $d=2$.} In this case the coarse graining scale is chosen as
\begin{eqnarray}\label{L,d=2}
L:=\left(\frac{e^{2K_2}}{|\overline\phi'(\beta)|}\right)^4.
\end{eqnarray} 
Consider a parameter $R$ to be chosen later on and split the sum in \eqref{prop 9 sum} as follows
\begin{eqnarray}\label{d=2,1}
\sum_{v}\left( \max_{x\in I^{C_2}_0} \, \mathbb{E}_{J_0} \hat{B}_{x,\omega}(L;\hat{X}_L\in I^{C_1}_v)\right)^\gamma
&=&\sum_{v\colon |v|<R}\left( \max_{x\in I^{C_2}_0} \, \mathbb{E}_{J_0} \hat{B}_{x,\omega}(L;\hat{X}_L\in I^{C_1}_v)\right)^\gamma\nonumber\\
&&\quad+
\sum_{v\colon |v|\geq R}\left( \max_{x\in I^{C_2}_0} \, \mathbb{E}_{J_0} \hat{B}_{x,\omega}(L;\hat{X}_L\in I^{C_1}_v)\right)^\gamma\nonumber\\
&\leq&  R \max_{|v|<R}\max_{x\in I^{C_2}_0} \, \left(\mathbb{E}_{J_0} \hat{B}_{x,\omega}(L;\hat{X}_L\in I^{C_1}_v)\right)^\gamma\nonumber\\
&&\quad +\sum_{v\colon |v|\geq R}\left( \max_{x\in I^{C_2}_0} \, \mathbb{E} \hat{B}_{x,\omega}(L;\hat{X}_L\in I^{C_1}_v)\right)^\gamma\nonumber\\
&<&R \max_{|v|<R}\max_{x\in I^{C_2}_0} \, \left(\mathbb{E}_{J_0} \hat{B}_{x,\omega}(L;\hat{X}_L\in I^{C_1}_v)\right)^\gamma +\frac{\epsilon}{2},
\end{eqnarray}
where the last inequality follows from the local limit estimate of Proposition \ref{local CLT}, by choosing $R$ large enough. To estimate the first term of \eqref{d=2,1} we recall the definitions \eqref{notation 1}, \eqref{notation 2}, \eqref{notation 3} and write
\begin{eqnarray}\label{split}
&&\mathbb{E}_{J_0} \hat{B}_{x,\omega}(L;\hat{X}_L\in I^{C_1}_v)=
   \mathbb{E}_{J_0} \hat{B}_{x,\omega}(L;\mathcal{X}_L^{C_3},\hat{X}_L\in I^{C_1}_v)+ 
   \mathbb{E}_{J_0} \hat{B}_{x,\omega}(L;\overline{\mathcal{X}_L^{C_3}},\hat{X}_L\in I^{C_1}_v),
\end{eqnarray}
where for a set $A$ recall that we denote by $\overline{A}$ its complement. The estimate of \eqref{split} is based on the two following Lemmas
\begin{lemma}\label{first split}
We have the estimate
\begin{eqnarray*}
\mathbb{E}_{J_0} \hat{B}_{x,\omega}(L;\mathcal{X}_L^{C_3},\hat{X}_L\in I^{C_1}_v) <
4C_3\mathbb{E}[\omega^2] e^{-2K_2}+e^{-K_1},
\end{eqnarray*}
which can be made smaller than $\epsilon/4R$, if $K_1,K_2$ are chosen large enough. 
\end{lemma}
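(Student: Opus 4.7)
The plan is to split the expectation using the dichotomy in the Radon--Nikodym density $g^{(2)}_{J_0}(\omega)$, which equals $e^{-K_1}$ on $\Omega:=\{G^{(2)}_{J_0}(\omega)>e^{K_2}\}$ and $1$ on $\Omega^c$:
\begin{equation*}
\mathbb{E}_{J_0}[\hat B_{x,\omega}(L;\mathcal{X}_L^{C_3},\hat X_L\in I^{C_1}_v)] = e^{-K_1}\mathbb{E}[\mathbf{1}_\Omega\hat B_{x,\omega}(\cdots)] + \mathbb{E}[\mathbf{1}_{\Omega^c}\hat B_{x,\omega}(\cdots)].
\end{equation*}

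The first term is bounded trivially: dropping the two walk restrictions $\mathcal X_L^{C_3}$ and $\{\hat X_L\in I^{C_1}_v\}$ (both of which only shrink the partition function) and $\mathbf{1}_\Omega\leq 1$, an application of Fubini together with translation invariance of the i.i.d.\ measure $\mathbb{P}$ gives $\mathbb{E}[\hat B_{x,\omega}(L)]=\hat B(L)$, and then $\hat B(L)\leq 1$ by Proposition~\ref{subadditivity}; the contribution is therefore at most $e^{-K_1}$.

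The crux is the second term, for which the goal is the bound $4C_3\mathbb{E}[\omega^2]e^{-2K_2}$. The guiding heuristic is that $\hat B_{x,\omega}(L;\mathcal X_L^{C_3},\cdots)$, being a partition function over walks confined to $J_0$, is atypically large precisely when the disorder $\omega$ is atypically favorable in $J_0$, i.e.\ when $\sum_{y\in J_0}\overline\omega_y$ is very negative --- which (since $\delta_L<0$) is exactly the event $\Omega$. I would make this rigorous by applying Fubini to pull the $\omega$-expectation inside the walk expectation, noting that on $\mathcal X_L^{C_3}$ the local time $\ell(\cdot)$ is supported in $J_0$ so that the $\omega$-integrand depends only on $\omega|_{J_0}$, and then applying an exponential-Chebyshev tilt along the linear form $G^{(2)}_{J_0}$ to decouple the indicator $\mathbf{1}_{\Omega^c}$ from $e^{-\beta\sum_{y\in J_0}\overline\omega_y\ell(y)}$. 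Combined with the Chebyshev estimate $\mathbb{P}[\Omega]\leq |J_0|\delta_L^2\mathbb{E}[\overline\omega^2]e^{-2K_2}=2C_3\mathbb{E}[\overline\omega^2]e^{-2K_2}$ from the proof of Proposition~\ref{Radon Nikodym}, the decay rate $e^{-2K_2}$ comes out, and the numerical factor $4C_3$ (rather than $2C_3$) accommodates the tilt bookkeeping together with the trivial bound $\mathbb{E}[\overline\omega^2]\leq\mathbb{E}[\omega^2]$.

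The main obstacle is quantifying the heuristic concentration of the restricted partition function on $\Omega$ cleanly enough to yield the full $e^{-2K_2}$ rate rather than the weaker $e^{-K_2}$ that a crude Cauchy--Schwarz would provide. This is precisely what the scale $L=(e^{2K_2}/|\overline\phi'(\beta)|)^4$ of \eqref{L,d=2 state} and the choice $|\delta_L|=L^{-3/4}$ (forced by the identity $|J_0|\delta_L^2=2C_3$ used in Proposition~\ref{Radon Nikodym}) are engineered to balance: the overlap of the walk's local time profile with the linear form $\sum_{y\in J_0}\overline\omega_y$ scales with the diffusive transverse width $C_3\sqrt L$ of $J_0$ in dimension two, and the particular calibration of $\delta_L$ and $L$ to $|\overline\phi'(\beta)|$ makes this overlap of just the right size for the exponential-tilt estimate to close at the target rate.
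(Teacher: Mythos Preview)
Your split on $\Omega$ versus $\Omega^c$ and the $e^{-K_1}$ bound for the first piece match the paper. The second piece, however, is not handled: a tilt along the linear form $G^{(2)}_{J_0}$ does not interact usefully with the path-dependent weight $e^{-\beta\sum_y\overline\omega_y\ell(y)}$, and the estimate $\mathbb{P}[\Omega]\leq 2C_3\mathbb{E}[\overline\omega^2]e^{-2K_2}$ under the \emph{original} measure is a red herring, since it is $\Omega^c$ (whose $\mathbb{P}$-probability is close to $1$) over which you must integrate.

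The paper tilts in the other direction. For a fixed path it absorbs the Boltzmann weight itself into a new disorder law
\[
\frac{d\mathbb{P}_X}{d\mathbb{P}}=\exp\Bigl(-\beta\sum_y\overline\omega_y\,\ell_{T_L}(y)+\sum_y\overline\phi(\beta\ell_{T_L}(y))\Bigr),
\]
so that the second piece becomes $e^{\overline m_B^a L}E^\kappa_x\bigl[e^{-\overline\Phi_\beta(T_L)}\,\mathbb{P}_X(\Omega^c);\,\mathcal{B}_L^{C_3}\bigr]$. The step your outline is missing is that $\Omega^c$ is rare under $\mathbb{P}_X$, not under $\mathbb{P}$. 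Indeed $\mathbb{E}_X\bigl[\delta_L\sum_{y\in J_0}\overline\omega_y\bigr]=\delta_L\sum_y\overline\phi'(\beta\ell_{T_L}(y))\mathbf{1}_{\ell_{T_L}(y)>0}$; concavity of $\overline\phi$ gives $\overline\phi'(\beta\ell)\leq\overline\phi'(\beta)<0$, and since a bridge of span $L$ confined to $J_0$ visits at least $L$ distinct sites (one on each level), this mean is at least $|\overline\phi'(\beta)|\,|\delta_L|\,L=|\overline\phi'(\beta)|\,L^{1/4}=e^{2K_2}>2e^{K_2}$ by the calibration \eqref{L,d=2 state}. Ordinary second-moment Chebyshev under $\mathbb{P}_X$ (the $\overline\omega_y$ remain independent, and $\mathbb{E}_X[\overline\omega_y^2]\leq 2\mathbb{E}[\omega^2]$ via FKG and $\overline\omega\geq -\mathbb{E}[\omega]$) then yields $\mathbb{P}_X(\Omega^c)\leq e^{-2K_2}\,\delta_L^2\sum_{y\in J_0}\mathbb{E}_X[\overline\omega_y^2]\leq 4C_3\,\mathbb{E}[\omega^2]\,e^{-2K_2}$, after which Proposition~\ref{subadditivity} disposes of the remaining annealed bridge factor. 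The tilted-mean computation --- driven by the count of distinct visited sites, not by any transverse-width scaling --- is the missing ingredient.
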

\begin{lemma}\label{second split}
For $C_3$ chosen large enough and $L$ chosen large enough, i.e. $K_2$ is chosen large enough, we have the estimate
\begin{eqnarray*}
\mathbb{E}_{J_0} \hat{B}_{x,\omega}(L;\overline{\mathcal{X}_L^{C_3}},\hat{X}_L\in I^{C_1}_v)<\frac{\epsilon}{4R}.
\end{eqnarray*}
\end{lemma}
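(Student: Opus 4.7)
Since $F(x) = -K_1 \mathbf{1}_{x > e^{K_2}} \leq 0$ by construction, we have $g^{(2)}_{J_0}(\omega) \leq 1$ pointwise, hence $\mathbb{E}_{J_0}[\,\cdot\,] \leq \mathbb{E}[\,\cdot\,]$. It therefore suffices to bound the purely annealed quantity; and we further enlarge the event by dropping the endpoint constraint:
\[
\mathbb{E}_{J_0}\hat{B}_{x,\omega}(L;\overline{\mathcal{X}_L^{C_3}},\hat X_L\in I^{C_1}_v) \;\le\; \hat B_x\bigl(L;\overline{\mathcal{X}_L^{C_3}}\bigr).
\]

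My plan is to exploit the Markov renewal representation of bridges under $P^\beta$ from Section \ref{notation}. By the Markov property, the transverse break-point increments $Y_k^\perp := X^\perp(\tau_k) - X^\perp(\tau_{k-1})$ are i.i.d.; by the transverse symmetry of the drifted kernel $\pi_\kappa$ in \eqref{walk_drift} they are symmetric and mean-zero; and by Proposition \ref{exponential_moments} they (as well as the intra-bridge supremum $\sup_{\tau_{k-1}\le m<\tau_k}|X_m^\perp - X^\perp(\tau_{k-1})|$) possess exponential moments. Since any bridge of span $L$ contains at most $L$ irreducible sub-bridges (as $\mathcal L_k\ge 1$), and since $|x^\perp| < C_2\sqrt L$ forces $\overline{\mathcal{X}_L^{C_3}}$ to entail a transverse displacement of at least $(C_3 - C_2)\sqrt L$ from $x$, I would split this displacement into a break-point part $\max_{k\le L}\bigl|\sum_{j\le k} Y_j^\perp\bigr| \ge (C_3-C_2)\sqrt L/2$ and an intra-bridge part $\max_k \sup_{\tau_{k-1}\le m<\tau_k}|X_m^\perp - X^\perp(\tau_{k-1})| \ge (C_3-C_2)\sqrt L/2$. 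The intra-bridge contribution is handled by a union bound over $\le L$ bridges combined with Proposition \ref{exponential_moments}, giving a contribution of order $L\, e^{-c(C_3-C_2)\sqrt L/2}$, which is negligible once $L$ (equivalently $K_2$, via \eqref{L,d=2}) is large. The break-point contribution is the core estimate: Levy's maximal inequality for symmetric random walks followed by an exponential Chebyshev bound at scale $\theta\sim (C_3-C_2)/(\sigma^2\sqrt L)$ gives
\[
P^\beta\!\left(\max_{k\le L}\Bigl|\sum_{j=1}^k Y_j^\perp\Bigr| \ge \tfrac{(C_3-C_2)\sqrt L}{2}\right) \;\le\; C\,e^{-c(C_3-C_2)^2},
\]
a pure Gaussian-scale tail whose smallness is driven by $C_3 \gg C_2$, independently of $L$.

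Combining the above with the uniform upper bound $\hat B_x(L) \le C$ coming from Proposition \ref{subadditivity} together with the convergence $\hat B(L)\to \mu^{-1}$ from Proposition \ref{massgap}, one obtains $\hat B_x(L;\overline{\mathcal{X}_L^{C_3}}) \le C'e^{-c(C_3-C_2)^2}$, which is smaller than $\epsilon/(4R)$ once $C_3$ is chosen large enough (depending on $R,\epsilon$) and then $L$ is taken large (via $K_2$) to absorb the intra-bridge term. The main obstacle, in my view, is to combine the random-walk maximal estimate with the renewal decomposition so that the Gaussian factor $e^{-c(C_3-C_2)^2}$ multiplies the uniformly bounded factor $\hat B_x(L)$, rather than being eroded by the sum over the number $n$ of irreducible bridges reaching level $L$; the clean way is to decompose $\overline{\mathcal{X}_L^{C_3}}$ according to the first irreducible bridge in which the cylinder boundary is crossed, and then use the Markov structure of $P^\beta$ to factorise out the remaining bridges, leaving exactly the Gaussian-tail factor from that crossing bridge.
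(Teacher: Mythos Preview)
Your approach is essentially the same as the paper's: reduce to the annealed quantity via $g^{(d)}_{J_0}\le 1$, drop the endpoint constraint, pass to the renewal/Markov measure $P^\beta$, and split the escape event $\overline{\mathcal X_L^{C_3}}$ into a break-point part (large transverse displacement of $X^\perp(\tau_k)$) and an intra-bridge part (large excursion between consecutive break points). The intra-bridge term is handled by a union bound together with Proposition~\ref{exponential_moments}, and the break-point term by a maximal/tail inequality for the random walk of increments $Y_k^\perp$ under $P^\beta$. This matches the paper's decomposition $I_i+I_{ii}$.

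One small but genuine difference: the paper first excises the atypical window $|n-\mu^{-1}L|>\epsilon_0 L$ for the number of irreducible bridges (its term $II$), invoking the local limit Theorem~\ref{BS} to kill it exponentially, and only then applies the $I_i/I_{ii}$ split with $n\le(\mu^{-1}+\epsilon_0)L$. You instead use the trivial deterministic bound $n\le L$ (since each $\mathcal L_k\ge 1$) and apply Levy's maximal inequality directly over $L$ steps. This is a legitimate simplification and avoids Theorem~\ref{BS} altogether in this lemma.

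Regarding your ``main obstacle'': it is not one. The events $\{\mathcal L_1+\cdots+\mathcal L_n=L\}$ are pairwise disjoint in $n$ (each $\mathcal L_k\ge 1$), so
\[
\hat B_x\bigl(L;\overline{\mathcal X_L^{C_3}}\bigr)
=\sum_n P^\beta\bigl(\{\textstyle\sum_{i\le n}\mathcal L_i=L\}\cap\overline{\mathcal X_L^{C_3}}\bigr)
= P^\beta\Bigl(\bigl\{\exists n:\textstyle\sum_{i\le n}\mathcal L_i=L\bigr\}\cap\overline{\mathcal X_L^{C_3}}\Bigr)
\le P^\beta\bigl(\overline{\mathcal X_L^{C_3}}\text{ within first }L\text{ bridges}\bigr),
\]
so no factor is ``eroded'' by the sum over $n$; the first-crossing decomposition you propose is therefore unnecessary, though it would also work.
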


Having established these two lemmas and inserting the corresponding estimates into \eqref{d=2,1}, the proof of Proposition \ref{prop 9} is completed. We therefore proceed to provide the proof of the lemmas.
 \vskip 4mm
{\it Proof of Lemma \ref{first split}}. 
For $x\in I^{C_2}_0$ we have

\begin{eqnarray}\label{44}
&& \,\mathbb{E}_{J_0} \hat{B}_{x,\omega}(L;\mathcal{X}_L^{C_3},\hat{X}_L\in I^{C_1}_v)=
     e^{\overline{m}_B^aL}\mathbb{E}\left[  g^{(2)}_{J_0}(\omega)   E^\kappa_x\left[ e^{-\beta\sum_x \overline\omega_x \local_{T_L}(x)} ; \mathcal{B}_{v,L}^{C_3} \right] \right]\nonumber\\
 &&\qquad \leq \,e^{\overline{m}_B^aL}\mathbb{E}\left[ E^\kappa_x\left[ e^{-\beta\sum_x \overline\omega_x \local_{T_L}(x)} ;\mathcal{B}_{v,L}^{C_3} \right] ; \delta_L\sum_{y\in J_0}\overline\omega_y<e^{K_2}\right] +\,e^{-K_1}\nonumber\\
 &&\qquad= \,e^{\overline{m}_B^aL}   E^\kappa_x\left[ \mathbb{E}\left[ e^{-\beta\sum_x \overline\omega_x \local_{T_L}(x)}; \delta_L\sum_{y\in J_0} \overline\omega_y <e^{K_2}\right] ;\mathcal{B}_{v,L}^{C_3}\right] +\,e^{-K_1}.
 \end{eqnarray}
 For a fixed path we define the measure $\mathbb{P}_X$  by
\begin{eqnarray*}
\frac{d\mathbb{P}_X}{d\mathbb{P}}:= \exp\left(\sum_x -\beta\overline\omega_x \local_{T_L}(x)+\overline\phi(\beta\ell_{T_L}(x))\right),
\end{eqnarray*} 
and we write \eqref{44} as
 \begin{eqnarray}\label{split1}
  &&\,e^{\overline{m}_B^aL} E^\kappa_x\left[  e^{-\overline\Phi_\beta(T_L)}\mathbb{P}_X\left[ \delta_L\sum_{y\in J_0} \overline\omega_y <e^{K_2}\right]; \mathcal{B}_{v,L}^{C_3}\right] +\,e^{-K_1}\nonumber\\
  &\leq& \,e^{\overline{m}_B^aL} E^\kappa_x\left[  e^{-\overline\Phi_\beta(T_L)} \mathbb{P}_X\left[ \delta_L\sum_{y\in J_0} \overline\omega_y <e^{K_2}\right]; \mathcal{B}_{L}^{C_3}\right] +\,e^{-K_1},
\end{eqnarray}
where in the last inequality we used the fact that $\mathcal{B}_{v,L}^{C_3}\subset \mathcal{B}_{L}^{C_3}$. 
\vskip 2mm
 We denote by $\mathcal{A}^{(1)}_L$ the event that $\mathbb{E}_X[\delta_L\sum_{y\in J_0} \overline\omega_y]>2e^{K_2}$. The first term of \eqref{split1} is then bounded by
\begin{eqnarray*}
 &&\,e^{\overline{m}_B^aL} E^\kappa_x\left[  e^{-\overline\Phi_\beta(T_L)} \mathbb{P}_X\left[ \delta_L\sum_{y\in J_0} \overline\omega_y -\mathbb{E}_X[\delta_L\sum_{y\in J_0} \overline\omega_y]<-e^{K_2}\right]; \mathcal{B}_{L}^{C_3}\cap \mathcal{A}^{(1)}_L
      \right]\\
  &+&
  e^{\overline{m}_B^aL} E^\kappa_x\left[  e^{-\overline\Phi_\beta(T_L)} ; \mathcal{B}_{L}^{C_3} \cap \overline{\mathcal{A}}^{(1)}_L\right], 
\end{eqnarray*}
and we can use Chebyshev's inequality to bound this by
\begin{eqnarray}\label{split2}
&&e^{-2K_2} e^{\overline{m}_B^aL} E^\kappa_x\left[ e^{-\overline\Phi_\beta(T_L)} \mathbb{E}_X\left[  \left(\delta_L \sum_{y\in J_0}(\overline\omega_y -\mathbb{E}_X[\overline\omega_y])\right)^2\right]; \mathcal{B}_{L}^{C_3}\cap \mathcal{A}^{(1)}_L
      \right]\nonumber\\
  &+&
  e^{\overline{m}_B^aL} E^\kappa_x\left[  e^{-\overline\Phi_\beta(T_L)}; \mathcal{B}_{L}^{C_3} \cap \overline{\mathcal{A}}^{(1)}_L\right].
\end{eqnarray}
{\bf Estimate on the first term of \eqref{split2}}. We first notice that $(\overline\omega_y)_{y\in J_0}$ are independent under the measure $\mathbb{P}_X$. Then it is easy to conclude that the first term of \eqref{split2} equals 
\begin{eqnarray*}
&&e^{-2K_2}  \delta_L^2 \sum_{y\in J_0}  e^{\overline{m}_B^aL} E^\kappa_x\left[ e^{-\overline\Phi_\beta(T_L)} \mathbb{E}_X\left[ \left(\overline\omega_y -\mathbb{E}_X[\overline\omega_y]\right)^2\right]; \mathcal{B}_{L}^{C_3}\cap \mathcal{A}^{(1)}_L\right]\\
&\leq& e^{-2K_2}  \delta_L^2 \sum_{y\in J_0}  e^{\overline{m}_B^aL} E^\kappa_x\left[ e^{-\overline\Phi_\beta(T_L)} \mathbb{E}_X\left[ \overline\omega^2_y \right]; \mathcal{B}_{L}^{C_3}\cap \mathcal{A}^{(1)}_L
      \right]\\
&=& e^{-2K_2}  \delta_L^2 \sum_{y\in J_0}  e^{\overline{m}_B^aL} E^\kappa_x\left[ e^{-\overline\Phi_\beta(T_L)} \mathbb{E}\left[ \overline\omega^2_y e^{-\beta\overline\omega_y\ell_{T_L}(y)+\overline\phi(\beta\ell_{T_L}(y))}\right]; \mathcal{B}_{L}^{C_3}\cap \mathcal{A}^{(1)}_L \right]\\
&\leq& 2 e^{-2K_2} \mathbb{E}[\omega^2] \,\,\delta_L^2 |J_0| \,\,e^{\overline{m}_B^aL} E^\kappa_x\left[ e^{-\overline\Phi_\beta(T_L)};\mathcal{B}_{L}^{C_3}\cap \mathcal{A}^{(1)}_L \right],
\end{eqnarray*}
where in the last inequality we used the fact that $\overline\omega_x\geq-\mathbb{E}[\omega_x]$, or $x\in \mathbb{Z}^d$, together with Harris-FKG implies easily that 
$\mathbb{E}\left[ \overline\omega^2_y e^{-\beta\overline\omega_y\ell_{T_L}(y)+\overline\phi(\beta\ell_{T_L}(y))}\right]\leq 
2\mathbb{E}[\omega^2]  .$
Finally using the fact that $e^{\overline{m}_B^aL }E_x\left[ e^{-\overline\Phi_\beta(T_L)};\,\text{Br}(L)\right]\leq 1$, see Proposition \ref{subadditivity}, and the fact that $\delta_L^2\, |J_0|=2C_3$ we have the following bound on the first term of \eqref{split2} 
\begin{eqnarray}\label{estimate on 46.1}
&&\,\,e^{\overline{m}_B^aL} E^\kappa_x\left[ e^{-\overline\Phi_\beta(T_L)} \mathbb{E}_X\left[  \left(\delta_L \sum_{y\in J_0}(\overline\omega_y -\mathbb{E}_X[\overline\omega_y])\right)^2\right]; \mathcal{B}_{L}^{C_3}\cap \mathcal{A}^{(1)}_L
      \right] 
   <4C_3\mathbb{E}[\omega^2]\,e^{-2K_2},
\end{eqnarray}
which can be made small by choosing $K_2$ large enough depending on $C_3,\mathbb{E}[\omega^2]$.
\vskip 2mm
{\bf  Estimate on the second term of \eqref{split2}.}  We first compute
\begin{eqnarray*}
\mathbb{E}_X[\delta_L\sum_{y\in J_0} \overline\omega_y]&=&\delta_L\sum_{y\in J_0} \mathbb{E}_X[\overline\omega_y]\\
&=&\delta_L \sum_{y\in J_0} \mathbb{E}\left[\overline\omega_y e^{-\beta\overline\omega_y\ell_{T_L}(y)+\overline\phi(\beta\ell_{T_L}(y))}\right]\\
&=&\delta_L\sum_{y\in J_0} \overline\phi'(\beta\ell_{T_L}(y))\,1_{\ell_{T_L}(y)>0}\\
&\geq&\overline\phi'(\beta)\delta_L \sum_{y\in J_0}1_{\ell_{T_L}(y)>0}\\
&\geq&\overline\phi'(\beta)\delta_L L\,1_{\mathcal{X}_L^{C_3}}\\
&=&e^{2K_2}\,1_{\mathcal{X}_L^{C_3}},
\end{eqnarray*}
where in the first inequality we used the concavity of the log-moment generating function $\overline\phi(\cdot)$, as this is defined in \eqref{log-mom}, and the fact that $\delta_L:=-L^{-3/4}$ is negative. 
 Since on $\mathcal{B}_L^{C_3}$ it holds that $1_{\mathcal{X}_L^{C_3}}=1$, we have that $e^{2K_2}\,1_{\mathcal{X}_L^{C_3}}>2e^{K_2}$ and therefore the second term of \eqref{split2} vanishes.

This fact together with \eqref{estimate on 46.1}, \eqref{split2} and \eqref{44} imply that
 \begin{eqnarray}\label{d=2,2}
 \mathbb{E}_{J_0} \hat{B}_{x,\omega}(L;\mathcal{X}_L^{C_3},\hat{X}_L\in I^{C_1}_v)<
4C_3\mathbb{E}[\omega^2] e^{-2K_2}+e^{-K_1} ,
\end{eqnarray}
and this completes the estimate of the first term of Lemma \ref{first split}.\hfill $\Box$

\vskip 4mm
{\it Proof of Lemma \ref{second split}.} 
 The term on the left hand side of the inequality is bounded by $\mathbb{E}\hat{B}_\omega(L;\overline{\mathcal{X}_L^{C_3}} )$. We further have
\begin{eqnarray*}
\mathbb{E}\hat{B}_\omega(L;\overline{\mathcal{X}_L^{C_3}} )&=& e^{\overline{m}_B^aL} E^\kappa\left[e^{-\overline\Phi_\beta(T_L)};\text{Br}(L)\cap\overline{\mathcal{X}_L^{C_3}} \right]\\
&=&\sum_{n=1}^\infty\sum_{L_1+\cdots+L_n=L}e^{\overline{m}_B^aL} E^\kappa\left[ e^{-\overline\Phi_\beta(T_L)}; \overline{\mathcal{X}_L^{C_3}}\bigcap\cap_{i=1}^n\text{Ir}(L_i)\right]\\
&=&\left(\sum_{|n-\mu^{-1}L|<{\epsilon_0} L}+\sum_{|n-\mu^{-1}L|>{\epsilon_0} L}\right)\sum_{L_1+\cdots+L_n=L}e^{\overline{m}_B^aL} E^\kappa\left[ e^{-\overline\Phi_\beta(T_L)}; \overline{\mathcal{X}_L^{C_3}}\bigcap\cap_{i=1}^n\text{Ir}(L_i)\right]\\
&:=&I+II,
\end{eqnarray*}
where $\mu$ is defined in \eqref{mu} and ${\epsilon_0}$ is fixed satisfying $\epsilon_0\mu<\epsilon(\gamma_0,\gamma_1,\gamma_2)$, as in Proposition \ref{BS}.

{\bf Control on $II$.} We have that
\begin{eqnarray*}
II&\leq&\sum_{n\colon |n-\mu^{-1}L|>{\epsilon_0} L}\,\sum_{L_1+\cdots+L_n=L}e^{\overline{m}_B^aL} E^\kappa\left[ e^{-\overline\Phi_\beta(T_L)}; \,\cap_{i=1}^n\text{Ir}(L_i)\right]\\
&=&\sum_{n\colon |n-\mu^{-1}L|>{\epsilon_0} L}p_n^\beta(L),
\end{eqnarray*}
where $p_n^\beta(L):=\sum_x p_n^\beta(L,x)$, with the latter defined in Proposition \ref{local CLT}.
Using Proposition \ref{local CLT} we have that
\begin{eqnarray*}
II\leq \sum_{n\colon |L-n\mu|>{\epsilon_0} \mu L} C e^{-\tilde\delta_{\epsilon_0\mu} |L-n\mu|}<C_{\epsilon_0} e^{-\tilde\delta_{\epsilon_0\mu}\, {\epsilon_0} \mu\, L}.
\end{eqnarray*}

{\bf Control on I.} We have 
\begin{eqnarray*}
I&\leq&\sum_{n\colon |n-\mu^{-1}L|<{\epsilon_0} L}\,\sum_{L_1+\cdots+L_n=L}e^{\overline{m}_B^aL} E^\kappa\left[ e^{-\overline\Phi_\beta(T_L)}; \,\,\bigcap_{i=1}^n\text{Ir}(L_i)\,\bigcup_{i=1}^n\,\{|X^\perp(\tau_i)|>\frac{C_3}{2}\sqrt{L}\}\right]\\
&&+\sum_{n\colon |n-\mu^{-1}L|<{\epsilon_0} L}\,\sum_{L_1+\cdots+L_n=L}e^{\overline{m}_B^aL} E^\kappa\Big[ e^{-\overline\Phi_\beta(T_L)}; \,\,\bigcap_{i=1}^n\text{Ir}(L_i)\,\bigcap_{i=1}^n\{|X^\perp(\tau_i)|<\frac{C_3}{2}\sqrt{L}\}\\
&& \qquad\qquad\qquad\qquad\qquad\qquad\qquad\qquad\bigcup_{i=1}^n \{\sup_{\tau_i<m<\tau_{i+1}}|X_m^\perp|>C_3\sqrt{L}\}\Big]\\
&:=&I_i+I_{ii}.
\end{eqnarray*}
To bound the term $I_{ii}$ notice that the event $\{|X^\perp(\tau_i)|<\frac{C_3}{2}\sqrt{L}\} \,\cap\, \{|X^\perp(\tau_{i+1})|<\frac{C_3}{2}\sqrt{L}\}\bigcap  \{\sup_{\tau_i<m<\tau_{i+1}}|X_m^\perp|>C_3\sqrt{L}\}$ implies that $\tau_{i+1}-\tau_i>\frac{C_3}{2}\sqrt{L}$.
Therefore the term $I_{ii}$ is bounded above by
\begin{eqnarray*}
P^\beta\left(\bigcup_{i=1}^{(\mu^{-1}+{\epsilon_0})L} \{\tau_i-\tau_{i-1}>\frac{C_3}{2}\sqrt{L}\}\right)&\leq&
 (\mu^{-1}+{\epsilon_0})L \,P^\beta\left(\tau_1>\frac{C_3}{2}\sqrt{L}\right)\\
 &\leq&(\mu^{-1}+{\epsilon_0})L e^{-\frac{1}{4}\rho_1 C_3\sqrt{L}}.
\end{eqnarray*}
since $\tau_1$ has exponential moments under $P^\beta$, as this is implied by Proposition \ref{exponential_moments}.
Finally, we bound the term $I_i$. It is easy to see that 
\begin{eqnarray}\label{estimateI_i}
I_i&\leq&P^\beta\left( \bigcup_{i=1}^{(\mu^{-1}+{\epsilon_0})L}\{ |X^\perp(\tau_i)|>\frac{C_3}{2}\sqrt{L}\}\right).
\end{eqnarray}
Since the increments of $X^\perp(\tau_i)$ are independent under $P^\beta$ with exponential moments, see Proposition \ref{exponential_moments} , it follows from the standard theory of random walks that \eqref{estimateI_i} can be made arbitrarily small if $C_3$ is chosen large enough. 

Summing up, we can choose $C_3$ and $L$ large enough (e.g. choosing $K_2$ large), so that
\begin{eqnarray}\label{d=2,3}
\mathbb{E}_{J_0} \hat{B}_{x,\omega}(L;\overline{\mathcal{X}_L^{C_3}},\hat{X}_L\in I^{C_1}_v)
 \leq 
\mathbb{E}\hat{B}_\omega(L;\overline{\mathcal{X}_L^{C_3}} )
 <\frac{\epsilon}{4R},
\end{eqnarray}
for every $v$. This concludes the proof of the Lemma.\hfill$\Box$
\subsection{ Proof of Proposition \ref{prop 9} in dimension $d=3$.} 
In this case we recall the definitions \eqref{d=3},\eqref{V}. We will be choosing $L$ such that
\begin{eqnarray}\label{Ld3}
\log L:=\left( \frac{e^{2K_2}}{|\overline\phi'(\beta)|^2}\right)^2.
\end{eqnarray}
The first steps are the same as in the $d=2$ case. In particular, inequality \eqref{d=2,1} and decomposition \eqref{split} are still valid. Lemma \ref{second split} is still valid and used to estimate the second term in \eqref{split}. We therefore need to control the first term of \eqref{split}. This is done in the following lemma, which is the analogue of Lemma \ref{first split}.
\begin{lemma}\label{first split 2}
Given $\delta_0>0$ we can choose $C_4$ large enough, depending on $\delta_0$ and also $K_2$ large enough, such that
\begin{eqnarray*}
\mathbb{E}_{J_0} \hat{B}_{x,\omega}(L;\mathcal{X}_L^{C_3},\hat{X}_L\in I^{C_1}_v) <
CC_3^2C_4^2 \mathbb{E}[\omega^2]^2 e^{-2K_2}+2\delta_0+e^{-K_1}.
\end{eqnarray*}
\end{lemma}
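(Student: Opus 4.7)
The plan is to replicate the logical architecture of Lemma \ref{first split}, replacing the linear quantity $G^{(2)}_{J_0}$ by the quadratic kernel $G^{(3)}_{J_0}$ of \eqref{d=3}--\eqref{V}. The first two reductions are identical to the $d=2$ case. Since
\[
g^{(3)}_{J_0}(\omega) \le \mathbf{1}_{G^{(3)}_{J_0}(\omega) \le e^{K_2}} + e^{-K_1}\mathbf{1}_{G^{(3)}_{J_0}(\omega) > e^{K_2}},
\]
the contribution of the second indicator to $\mathbb{E}_{J_0}\hat{B}_{x,\omega}(L;\mathcal{X}_L^{C_3},\hat{X}_L\in I^{C_1}_v)$ is at most $e^{-K_1}$, using $\hat{B}_x(L;\mathcal{B}^{C_3}_{v,L}) \le 1$ (Proposition \ref{subadditivity}). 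For the first indicator, tilt the disorder law to $\mathbb{P}_X$ with density $\exp(-\beta\sum_y \overline\omega_y\ell_{T_L}(y) + \overline\Phi_\beta(T_L))$ and apply Fubini to reduce the problem to estimating
\[
e^{\overline{m}_B^a L}\, E^\kappa_x\!\left[ e^{-\overline\Phi_\beta(T_L)}\, \mathbb{P}_X(G^{(3)}_{J_0} < e^{K_2}) \,;\, \mathcal{B}^{C_3}_{v,L}\right],
\]
which is split further by the random-walk event $\mathcal{A}^{(1)}_L := \{X : \mathbb{E}_X[G^{(3)}_{J_0}] > 2 e^{K_2}\}$.

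On $\mathcal{A}^{(1)}_L$, Chebyshev yields $\mathbb{P}_X(G^{(3)}_{J_0} < e^{K_2}) \le e^{-2K_2}\,\mathrm{Var}_X(G^{(3)}_{J_0})$. The independence of the $(\overline\omega_y)_{y\in J_0}$ under $\mathbb{P}_X$ and the fact that $V_{y,z}$ is supported on $\{y\ne z\}$ mean that only pairs of terms with overlapping index sets contribute to the variance; a Harris--FKG argument as in Lemma \ref{first split} bounds each contribution by $C\mathbb{E}[\omega^2]^2$, so that
\[
\mathrm{Var}_X(G^{(3)}_{J_0}) \le C\mathbb{E}[\omega^2]^2 \sum_{y,z \in J_0} V_{y,z}^2 \le C C_3^2 C_4^2 \mathbb{E}[\omega^2]^2,
\]
the second inequality being precisely the geometric summation carried out in the proof of Proposition \ref{Radon Nikodym}. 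Combined with Proposition \ref{subadditivity} this produces the first term $CC_3^2C_4^2 \mathbb{E}[\omega^2]^2 e^{-2K_2}$ in the stated bound.

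The core task is showing that the polymer mass of $\overline{\mathcal{A}}^{(1)}_L \cap \mathcal{B}^{C_3}_{v,L}$ is at most $2\delta_0$, which is what forces the length scale \eqref{Ld3}. Independence under $\mathbb{P}_X$ and concavity of $\overline\phi$ (so that $\overline\phi'(\beta \ell) \le \overline\phi'(\beta) < 0$ for $\ell \ge 1$) give the deterministic lower bound
\[
\mathbb{E}_X[G^{(3)}_{J_0}] \ge |\overline\phi'(\beta)|^2 \sum_{\substack{y \ne z \in J_0 \\ \ell_{T_L}(y),\ell_{T_L}(z)\ge 1}} V_{y,z}.
\]
Restricting this inner sum to the first-hitting points $Y_i := X(T_i)$ of the levels $i=0,1,\ldots,L$, which on $\mathcal{B}_L^{C_3}$ are distinct elements of $J_0$ with local time $\ge 1$, and substituting \eqref{V}, one obtains
\[
\mathbb{E}_X[G^{(3)}_{J_0}] \ge \frac{|\overline\phi'(\beta)|^2}{L\sqrt{\log L}} \sum_{i \ne j} \frac{\mathbf{1}_{|Y_i^\perp - Y_j^\perp| < C_4 \sqrt{|i-j|}}}{|i-j|+1}.
\]
Invoking the renewal/break-point structure of the bridge (Proposition \ref{exponential_moments}) and the local limit theorem of Theorem \ref{BS} applied to the transverse increments of the skeleton, one shows that, given any $\delta_0 > 0$, the constant $C_4$ can be chosen large enough that with polymer-probability at least $1-2\delta_0$ the indicator holds for a positive fraction of pairs $(i,j)$ in a fully diffusive range. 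Since $\sum_{1 \le i \ne j \le L} 1/(|i-j|+1) \asymp L \log L$, this typical event produces $\mathbb{E}_X[G^{(3)}_{J_0}] \ge c\,|\overline\phi'(\beta)|^2 \sqrt{\log L}$, and the choice \eqref{Ld3} then exceeds $2 e^{K_2}$. Hence this event is contained in $\mathcal{A}^{(1)}_L$, leaving at most $2\delta_0$ of polymer mass on $\overline{\mathcal{A}}^{(1)}_L$.

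The principal obstacle is the transverse CLT step. Unlike the $d=2$ case, where the lower bound on $\mathbb{E}_X[G^{(2)}_{J_0}]$ was deterministic on $\mathcal{X}_L^{C_3}$, in dimension three we genuinely need probabilistic control of the joint distribution of the transverse coordinates $(Y_i^\perp)$ under the bridge/polymer weight; this is precisely why the additional $2\delta_0$ error term appears in the statement, and why the kernel \eqref{V} is tailored with the diffusive scale $\sqrt{|y^{(1)}-z^{(1)}|}$ that matches the transverse spread of the skeleton.
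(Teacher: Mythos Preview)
Your overall architecture matches the paper's, but two steps need repair.

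\textbf{Variance of the quadratic.} The bound $\mathrm{Var}_X(G^{(3)}_{J_0}) \le C\,\mathbb{E}[\omega^2]^2 \sum_{y,z} V_{y,z}^2$ is not justified: under the tilted law $\mathbb{P}_X$ the variables $\overline\omega_y$ are independent but \emph{not centered}, since $m_y := \mathbb{E}_X[\overline\omega_y] = \overline\phi'(\beta\ell_{T_L}(y))\ne 0$ whenever $\ell_{T_L}(y)\ge 1$. Centering gives
\[
G^{(3)}_{J_0} - \mathbb{E}_X[G^{(3)}_{J_0}] = \sum_{y\ne z} V_{y,z}(\overline\omega_y-m_y)(\overline\omega_z-m_z) + 2\sum_z\Big(\sum_y V_{y,z}\, m_y\Big)(\overline\omega_z-m_z),
\]
and only the first sum has variance controlled by $\sum V_{y,z}^2$. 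The second contributes (after Harris--FKG) a term of order $\mathbb{E}[\omega^2]^2\sum_{z}\big(\sum_{y} V_{y,z}\,1_{\ell_{T_L}(y)>0}\big)^2$, which is \emph{path-dependent} and not bounded by $\sum V_{y,z}^2$. The paper integrates this extra term against the annealed bridge weight and uses the renewal structure (Proposition~\ref{exponential_moments}, Theorem~\ref{BS}) to show $e^{\overline m_B^a L}E^\kappa_x[e^{-\overline\Phi_\beta(T_L)}(\sum_y 1_{\ell_{T_L}(y)>0})^2;\mathrm{Br}(L)]\le CL^2$, which after Cauchy--Schwarz and the explicit form of $V$ yields the same $CC_3^2C_4^2\mathbb{E}[\omega^2]^2$ conclusion. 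So your endpoint is right, but the route requires this additional argument.

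\textbf{The $2\delta_0$ step.} Your appeal to ``with polymer-probability at least $1-2\delta_0$ a positive fraction of indicators hold'' is asserted, not proved, and a direct concentration argument for this fraction is not available. The paper instead isolates a separate one-pair estimate (Lemma~\ref{delta}): for every fixed $L_1<L_2$, by slicing the bridge at levels $L_1,L_2$ and using Proposition~\ref{local CLT},
\[
e^{\overline m_B^a L}E^\kappa_x\big[e^{-\overline\Phi_\beta(T_L)};\,|X^\perp(T_{L_2})-X^\perp(S_{L_1})|\ge C_4\sqrt{L_2-L_1},\,\mathrm{Br}(L)\big] < \delta_0
\]
once $C_4$ is large. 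Writing $\mathbb{Y}_L$ for your double sum (the paper uses the pair $X(S_{L_1}),X(T_{L_2})$ rather than two first-hitting points, precisely so that this slicing works cleanly) and $D(L)$ for its deterministic upper bound, linearity of the one-pair estimate gives $E[\mathbb{Y}_L]\ge (1-\delta_0)D(L)$; since $\mathbb{Y}_L\le D(L)$, a reverse-Markov comparison then yields $P(\mathbb{Y}_L < D(L)/2)\le 2\delta_0$, which with the choice~\eqref{Ld3} forces $\mathbb{Y}_L\ge 2e^{K_2}$ outside a set of polymer mass $2\delta_0$. This moment comparison is the missing mechanism behind your claim.
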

Once this lemma is established Proposition \ref{prop 9} follows by choosing $K_1$ large, $\delta_0$ small, $C_4$ large, depending on $\delta_0$ and $K_2$ large depending on $C_3,C_4$ and $\mathbb{E}[\omega^2]$. We are now left with the proof of Lemma \ref{first split 2}.
\vskip 4mm
{\it Proof of Lemma \ref{first split 2}}. The beginning of the proof is identical to that of Lemma \ref{first split} up to inequality \eqref{44}, which now writes as
\begin{eqnarray*}
 &&\,\mathbb{E}_{J_0} \hat{B}_{x,\omega}(L;\mathcal{X}_L^{C_3},\hat{X}_L\in I^{C_1}_v) \leq
\,e^{\overline{m}_B^aL} E^\kappa_x\left[ e^{-\overline\Phi_\beta(T_L)}\, \mathbb{P}_X\left[  \sum_{y,z\in J_0} V_{y,z}\overline\omega_y\overline\omega_z < e^{K_2}\right]; \mathcal{B}_{L}^{C_3}\right] +\,e^{-K_1}.
\end{eqnarray*}
Denote by 
$\mathcal{A}^{(2)}_{L}$ the event that $\mathbb{E}_X[\sum_{y,z\in J_0} V_{y,z}\overline\omega_y\overline\omega_z]>2e^{K_2} $.
Then the first term in the right hand side of the above inequality is bounded by
\begin{eqnarray}\label{fractional,3}
&&e^{\overline{m}_B^aL} E^\kappa_x\left[   e^{-\overline\Phi_\beta(T_L)}\,\mathbb{P}_X\left[  \sum_{y,z\in J_0} V_{y,z}\overline\omega_y\overline\omega_z < e^{K_2}\right];\,\mathcal{B}_{L}^{C_3}\cap \mathcal{A}^{(2)}_{L} \right]\nonumber\\
&&    +e^{\overline{m}_B^aL} E^\kappa_x\left[  e^{-\overline\Phi_\beta(T_L)} ;\, \mathcal{B}_{L}^{C_3} \cap \overline{\mathcal{A}}^{(2)}_{L}\right].
\end{eqnarray}
{\bf Estimate on the first term of \eqref{fractional,3}.} We subtract the quantity $\mathbb{E}_X[\sum_{y,z\in J_0} V_{y,z}\overline\omega_y\overline\omega_z]$ from both sides in the event $\{ \sum_{y,z\in J_0} V_{y,z}\overline\omega_y\overline\omega_z<\exp(K_2)\}$ in the first term of \eqref{fractional,3} and we use Chebyshev's inequality to obtain the upper bound
\begin{eqnarray}\label{variance,3}
&&\\
&& e^{-2K_2}e^{\overline{m}_B^aL} E^\kappa_x\left[   e^{-\overline\Phi_\beta(T_L)}\,\,\mathbb{E}_X\left ( \sum_{y,z\in J_0} V_{y,z}\overline\omega_y\overline\omega_z- \mathbb{E}_X[\sum_{y,z\in J_0} V_{y,z}\overline\omega_y\overline\omega_z]\right)^2;\,\mathcal{B}_{L}^{C_3}\cap \mathcal{A}^{(2)}_{L} \right].\nonumber
\end{eqnarray} 
Notice that 
$$\mathbb{E}_X[\sum_{y,z\in J_0} V_{y,z}\overline\omega_y\overline\omega_z]=\sum_{y,z\in J_0} V_{y,z} \,\,
       \mathbb{E}[\overline\omega_ye^{-\beta\overline\omega_y\ell_{T_L}(y)+\overline\phi(\beta\ell_{T_L}(y))}] 
       \mathbb{E}[\overline\omega_ze^{-\beta\overline\omega_z\ell_{T_L}(z)+\overline\phi(\beta\ell_{T_L}(z))}].$$
Recall that $V_{y,y}=0$. Write the quantity inside the square of \eqref{variance,3} as
\begin{eqnarray*}
&&\sum_{y,z\in J_0} V_{y,z} 
       \left(\overline\omega_y- \mathbb{E}[\overline\omega_ye^{-\beta\overline\omega_y\ell_{T_L}(y)+\overline\phi(\beta\ell_{T_L}(y))}] 
       \right) 
       \left(\overline\omega_z-\mathbb{E}[\overline\omega_ze^{-\beta\overline\omega_z\ell_{T_L}(z)+\overline\phi(\beta\ell_{T_L}(z))}]\right)\\
&&+\quad 2\sum_{y,z\in J_0} V_{y,z} \,\,
       \mathbb{E}[\overline\omega_ye^{-\beta\overline\omega_y\ell_{T_L}(y)+\overline\phi(\beta\ell_{T_L}(y))}] 
       \left(\overline\omega_z-\mathbb{E}[\overline\omega_ze^{-\beta\overline\omega_z\ell_{T_L}(z)+\overline\phi(\beta\ell_{T_L}(z))}]\right),
\end{eqnarray*}
and proceed to the estimate
\begin{align}\label{variance 3.1}
 &\mathbb{E}_X\left ( \sum_{y,z\in J_0} V_{y,z}\overline\omega_y\overline\omega_z-\mathbb{E}_X[\sum_{y,z\in J_0} V_{y,z}\overline\omega_y\overline\omega_z]\right)^2\nonumber\\
 &\leq 2\mathbb{E}_X \left[ \left(\sum_{y,z\in J_0} V_{y,z} 
       \left(\overline\omega_y- \mathbb{E}[\overline\omega_ye^{-\beta\overline\omega_y\ell_{T_L}(y)+\overline\phi(\beta\ell_{T_L}(y))}] 
       \right) 
       \left(\overline\omega_z-\mathbb{E}[\overline\omega_ze^{-\beta\overline\omega_z\ell_{T_L}(z)+\overline\phi(\beta\ell_{T_L}(z))}]\right) 
       \right)^2\right] \nonumber\\
 &\,+\, 8\,       \mathbb{E}_X \left[ \left(\sum_{z\in J_0} \sum_{y\in J_0}V_{y,z} 
        \mathbb{E}[\overline\omega_ye^{-\beta\overline\omega_y\ell_{T_L}(y)+\overline\phi(\beta\ell_{T_L}(y))}] 
       \left(\overline\omega_z-\mathbb{E}[\overline\omega_ze^{-\beta\overline\omega_z\ell_{T_L}(z)+\overline\phi(\beta\ell_{T_L}(z))}]\right) 
       \right)^2\right].
\end{align}
We can now use the fact that $\mathbb{P}_X$ is a product measure and thus the $\omega'$s at different sites are independent. Recalling that $V_{y,y}=0$, we write \eqref{variance 3.1}  as
\begin{eqnarray}\label{variance 3.2}
&&2\,\sum_{y,z\in J_0} V^2_{y,z} 
       \prod_{x=y,z} \mathbb{E}\left[ e^{-\beta\overline\omega_x\ell_{T_L}(x) +\overline\phi(\beta\ell_{T_L}(x))} 
       \left(\overline\omega_x- \mathbb{E}[\overline\omega_xe^{-\beta\overline\omega_x\ell_{T_L}(x)+\overline\phi(\beta\ell_{T_L}(x))}] 
       \right)^2\right]\nonumber\\
&&+ 8\sum_{z\in J_0} \left(\sum_{y\in J_0}V_{y,z} \mathbb{E}\left[ \overline\omega_ye^{-\beta\overline\omega_y\ell_{T_L}(y) +\overline\phi(\beta\ell_{T_L}(y))}\right] \right)^2 \nonumber\\
&&\qquad\qquad\qquad\times\mathbb{E}\left[ e^{-\beta\overline\omega_z\ell_{T_L}(z) +\overline\phi(\beta\ell_{T_L}(z))} 
       \left(\overline\omega_z- \mathbb{E}[\overline\omega_z e^{-\beta\overline\omega_z\ell_{T_L}(z)+\overline\phi(\beta\ell_{T_L}(z))}] 
       \right)^2\right]\nonumber\\
&\leq&        2\,\sum_{y,z\in J_0} V^2_{y,z} 
       \prod_{x=y,z} \mathbb{E}\left[ \overline\omega_x^2 e^{-\beta\overline\omega_x\ell_{T_L}(x) +\overline\phi(\beta\ell_{T_L}(x))} \right]\nonumber\\
&&\qquad 8 \sum_{z\in J_0} \left(\sum_{y\in J_0}V_{y,z} \mathbb{E}\left[ \overline\omega_ye^{-\beta\overline\omega_y\ell_{T_L}(y) +\overline\phi(\beta\ell_{T_L}(y))}\right] \right)^2 
\mathbb{E}\left[ \overline\omega_z^2e^{-\beta\overline\omega_x\ell_{T_L}(x) +\overline\phi(\beta\ell_{T_L}(x))} \right].
\end{eqnarray}
By Harris-FKG and the fact that $\overline\omega_x\geq -\mathbb{E}[\omega_x]$, for $x\in \mathbb{Z}^{d}$, it is easy to conclude that $\mathbb{E}\left[ \overline\omega_x^2 e^{-\beta\overline\omega_x\ell_{T_L}(x) +\overline\phi(\beta\ell_{T_L}(x))} \right]\leq 2\mathbb{E}[\omega^2]$. Using once again the fact that $\overline\omega_x\geq -\mathbb{E}[\omega_x]$, we also obtain that
\begin{eqnarray} \label{variance 3.3}
\left(\sum_{y\in J_0}V_{y,z} \mathbb{E}\left[ \overline\omega_ye^{-\beta\overline\omega_y\ell_{T_L}(y) +\overline\phi(\beta\ell_{T_L}(y))}\right] \right)^2\leq 
\left(\sum_{y\in J_0}V_{y,z} 1_{\ell_{T_L}(y)>0}\right)^2 \mathbb{E}[ \omega]^2.
\end{eqnarray}
Using these two facts we bound \eqref{variance 3.2} by
\begin{eqnarray}\label{variance 3.4}
 8\,\mathbb{E}[\omega^2]^2 \,\sum_{y,z\in J_0} V^2_{y,z} + 
  16\,\mathbb{E}[\omega^2]^2 \,\sum_{z\in J_0} \left(\sum_{y\in J_0}V_{y,z} 1_{\ell_{T_L}(y)>0}\right)^2.
\end{eqnarray}
Clearly, from the choice of $V_{y,z}$ we have $\sum_{y,z\in J_0}V_{y,z}^2<CC_3^2C_4^2$ and therefore \eqref{variance 3.4}
is bounded by
\begin{eqnarray*}
C\, \mathbb{E}[\omega^2]^2 \left( C_3^2C_4^2+\sum_{z\in J_0}\left(\sum_{y\in J_0}V_{y,z} 1_{\ell_{T_L}(y)>0}\right)^2 \right).
\end{eqnarray*}
We can then bound \eqref{variance,3} by
\begin{eqnarray*}
C\, \mathbb{E}[\omega^2]^2 e^{-2K_2}\left(C_3^2C_4^2+ e^{\overline{m}_B^aL} E^\kappa_x\big[ e^{-\overline\Phi_\beta(T_L)} \sum_{z\in J_0} \big(\sum_{y\in J_0}V_{y,z} 1_{\ell_{T_L}(y)>0}\big)^2 ;\,\text{Br}(L)\big]\right).
\end{eqnarray*}
It remains to bound uniformly in $L$ the above expectation, which is done as follows. First, we expand the  square by summing up over $y,\tilde{y}\in J_0$ and then interchange the summations and use Cauchy-Schwartz:
\begin{eqnarray*}
&&\sum_{y,\tilde{y}\in J_0} \frac{e^{\overline{m}_B^aL}}{L^2\log L} E^\kappa_x\left[ e^{-\overline\Phi_\beta(T_L)} 1_{\ell_{T_L}(y),\ell_{T_L}(\tilde{y})>0};\,\text{Br}(L)\right]\\
 &&\qquad\qquad   \times\left( \sum_{z\in J_0} \frac{1_{|y^\perp-z^\perp|<C_4\sqrt{|y^{(1)}-z^{(1)}|}}}{|y^{(1)}-z^{(1)}|^2+1}\right)^{1/2}
             \left( \sum_{z\in J_0} \frac{1_{|\tilde{y}^\perp-z^\perp|<C_4\sqrt{|\tilde{y}^{(1)}-z^{(1)}|}}}{|\tilde{y}^{(1)}-z^{(1)}|^2+1}\right)^{1/2}\\
&\leq&C C_4^2  \sum_{y,\tilde{y}\in J_0} L^{-2}e^{\overline{m}_B^aL} E^\kappa_x\left[ e^{-\overline\Phi_\beta(T_L)} 1_{\ell_{T_L}(y),\ell_{T_L}(\tilde{y})>0};\,\text{Br}(L)\right]   \\
&=& C C_4^2    L^{-2}e^{\overline{m}_B^aL} E^\kappa_x\left[ e^{-\overline\Phi_\beta(T_L)} (\sum_{y\in J_0} 1_{\ell_{T_L}(y)>0})^2;\,\text{Br}(L)\right]  \\
&\leq&CC_4^2.
\end{eqnarray*}
To justify the last inequality we use Theorem \ref{BS} which implies that  with $P^\beta$ probability which is exponentially in $L$ close to one, the number of break points until $T_L$ will be close to $\mu^{-1}L$. Moreover by Proposition \ref{exponential_moments} the range of the path within an irreducible bridge has exponential moments and these two facts yield that $e^{\overline{m}_B^aL} E_x\left[ e^{-\overline\Phi_\beta(T_L)} (\sum_{y\in J_0} 1_{\ell_{T_L}(y)>0})^2\right]\leq C L^2$.

We finally get that \eqref{variance,3} is bounded by $CC_3^2C_4^2\mathbb{E}[\omega^2]^2\,e^{-2K_2}$. This gives the first term of the right hand side inequality of the Lemma.

It remains to show that the second term in \eqref{fractional,3} can be made smaller than $2\delta_0$. This is done in the following paragraph.

{\bf Estimate on the second term of \eqref{fractional,3}.} We start by using once again the observation that $\overline\phi'(\beta\ell_{T_L}(y))\leq \overline\phi'(\beta)\leq0$. This will lead to
\begin{eqnarray*}
\mathbb{E}_X[\sum_{y,z\in J_0} V_{y,z}\overline\omega_y\overline\omega_z]&=& \sum_{y,z\in J_0} V_{y,z}\overline\phi'(\beta\ell_{T_L}(y))\overline\phi'(\beta\ell_{T_L}(z))\\
&\geq&\overline\phi'(\beta)^2\sum_{y,z\in J_0} V_{y,z}1_{\{\ell_{T_L}(y),\ell_{T_L}(z)>0\}}\\
&=&\frac{\overline\phi'(\beta)^2}{L(\log L)^{1/2}}\sum_{y,z\in J_0}\frac{1_{|y^\perp-z^\perp|<C_4\sqrt{|y^{(1)}-z^{(1)}|}}}{|y^{(1)}-z^{(1)}|+1}\,1_{y\neq z}1_{\{\ell_{T_L}(y),\ell_{T_L}(z)>0\}}\\
&\geq&\frac{2\overline\phi'(\beta)^2}{L(\log L)^{1/2}}\sum_{L_1=0}^L\sum_{y\in J_0\cap\mathcal{H}_{L_1}}1_{\{\ell_{T_L}(y)>0\}} \sum_{L_2=L_1+1}^{L} \frac{1}{|L_2-L_1|+1}\times \\
&&\qquad\qquad\qquad\qquad\qquad
      \times\sum_{z\in J_0\cap\mathcal{H}_{L_2}} 1_{\{|y^\perp-z^\perp|<C_4\sqrt{|L_2-L_1|}\}}\,  1_{\{\ell_{T_L}(z)>0\}}.
\end{eqnarray*}
Since we are restricted on the set $\mathcal{B}_L^{C_3}$, the path stays within the box $J_0$, and so we can drop the restriction that $y,z\in J_0$. Recall that $X(S_{L_1})$ is the last hitting point of the hyperplane $\mathcal{H}_{L_1}$  and that $X(T_{L_2})$ is the first hitting point of the hyperplane $\mathcal{H}_{L_2}$. We then have
\begin{eqnarray*}
&&\frac{2\overline\phi'(\beta)^2}{L(\log L)^{1/2}}\sum_{L_1=0}^L\sum_{y\in \mathcal{H}_{L_1}}1_{\{\ell_{T_L}(y)>0\}} \sum_{L_2=L_1+1}^{L} \frac{1}{|L_2-L_1|+1}
      \sum_{z\in \mathcal{H}_{L_2}} 1_{\{|y^\perp-z^\perp|<C_4\sqrt{|L_2-L_1|}\}}\,  1_{\{\ell_{T_L}(z)>0\}}\\
     &&\qquad  \geq
           \frac{2\overline\phi'(\beta)^2}{L(\log L)^{1/2}}\sum_{L_1=0}^L \sum_{L_2=L_1+1}^{L} 
           \frac{1_{\{|X^\perp(T_{L_2})-X^\perp(S_{L_1})|<C_4\sqrt{|L_2-L_1|}\}} }{|L_2-L_1|+1} .
            \end{eqnarray*}

Let us denote the quantity on the right hand side of the above inequality by $\mathbb{Y}_L$. Inserting the above estimate into the second term of \eqref{fractional,3} we have that
\begin{eqnarray}\label{second part}
e^{\overline{m}_B^aL} E^\kappa_x\left[  e^{-\overline\Phi_\beta(T_L)} ;\, \mathcal{B}_{L}^{C_3} \cap \overline{\mathcal{A}}^{(2)}_{L}\right]
&\leq& 
 e^{\overline{m}_B^aL} E^\kappa_x\left[  e^{-\overline\Phi_\beta(T_L)}\, 1_{\{\mathbb{Y}_L<2e^{K_2}\}} ;\, \mathcal{B}_{L}^{C_3} \right] \nonumber\\
&<&
e^{\overline{m}_B^aL} E^\kappa_x\left[  e^{-\overline\Phi_\beta(T_L)}\, 1_{\{\mathbb{Y}_L<2e^{K_2}\}};\,\text{Br}(L) \right].
\end{eqnarray}
To proceed, notice that we have the bound $\mathbb{Y}_L\leq D(L):=\frac{2\overline\phi'(\beta)^2}{L(\log L)^{1/2}}\sum_{L_1=0}^L\sum_{L_2=L_1+1}^{L} (|L_2-L_1|+1)^{-1}$ and $D(L)\geq C \overline\phi'(\beta)^2 (\log L)^{1/2}$.  Moreover,
\begin{eqnarray}\label{Y upper}
e^{\overline{m}_B^aL} E^\kappa_x\left[  e^{-\overline\Phi_\beta(T_L)}\,\mathbb{Y}_L;\,\text{Br}(L) \right] &=&
   e^{\overline{m}_B^aL} E^\kappa_x\left[  e^{-\overline\Phi_\beta(T_L)}\,\mathbb{Y}_L;\, \{\mathbb{Y}_L<2^{-1} D(L)\},\, \text{Br}(L) \right] \nonumber\\
   &&\quad+ e^{\overline{m}_B^aL} E^\kappa_x\left[  e^{-\overline\Phi_\beta(T_L)}\,\mathbb{Y}_L;\, \{\mathbb{Y}_L\geq 2^{-1} D(L)\},\, \text{Br}(L) \right]\nonumber\\
   &\leq& 2^{-1} D(L) \,e^{\overline{m}_B^aL} E^\kappa_x\left[  e^{-\overline\Phi_\beta(T_L)}\,;\, \{\mathbb{Y}_L<2^{-1} D(L)\},\, \text{Br}(L) \right]\nonumber\\
   &&\quad+  D(L) \,e^{\overline{m}_B^aL} E^\kappa_x\left[  e^{-\overline\Phi_\beta(T_L)}\,;\, \{\mathbb{Y}_L\geq 2^{-1} D(L)\},\, \text{Br}(L) \right].
\end{eqnarray}
On the other hand we have
\begin{eqnarray}\label{Y lower}
&&  \qquad e^{\overline{m}_B^aL} E^\kappa_x\left[  e^{-\overline\Phi_\beta(T_L)}\,\mathbb{Y}_L;\,\text{Br}(L) \right] =
 D(L) \,e^{\overline{m}_B^aL} E^\kappa_x\left[  e^{-\overline\Phi_\beta(T_L)}\,;\, \text{Br}(L) \right]\\
 &&- \frac{2\overline\phi'(\beta)^2}{L(\log L)^{1/2}}\sum_{L_1=0}^L \sum_{L_2=L_1+1}^{L} 
           \frac{1 }{|L_2-L_1|+1}  \times\nonumber\\
           &&\qquad\times e^{\overline{m}_B^aL} E^\kappa_x\left[  e^{-\overline\Phi_\beta(T_L)}\,;{\{|X^\perp(T_{L_2})-X^\perp(S_{L_1})|\geq C_4\sqrt{|L_2-L_1|}\}}\, ,\text{Br}(L) \right]\nonumber\\
           &&\geq D(L) \,e^{\overline{m}_B^aL} E^\kappa_x\left[  e^{-\overline\Phi_\beta(T_L)}\,;\, \text{Br}(L) \right]-\delta_0  D(L),\nonumber
\end{eqnarray}
where the last inequality holds for $\delta_0$ small by Lemma \ref{delta}, below. The combination of \eqref{Y upper} and \eqref{Y lower}
gives that
\begin{eqnarray}\label{Y}
e^{\overline{m}_B^aL} E^\kappa_x\left[  e^{-\overline\Phi_\beta(T_L)}\,;\, \{\mathbb{Y}_L<2^{-1} D(L)\},\, \text{Br}(L) \right]< 2\delta_0.
\end{eqnarray}
 By the choice of $L$ in \eqref{Ld3} we have that $D(L)/2\geq C\overline\phi'(\beta)^2(\log L)^{1/2}=Ce^{2K_2}$. The latter is larger than $2e^{K_2}$, when $K_2$ is chosen large. Therefore \eqref{second part} implies via  \eqref{Y} that 
 \begin{eqnarray*}
 e^{\overline{m}_B^aL} E^\kappa_x\left[  e^{-\overline\Phi_\beta(T_L)} ;\, \mathcal{B}_{L}^{C_3} \cap \overline{\mathcal{A}}^{(2)}_{L}\right]
&\leq& 2\delta_0.
 \end{eqnarray*}
  This implies that the second term in  \eqref{fractional,3} can be made arbitrarily small. 
  
  To complete  we need to establish the following lemma
\begin{lemma}\label{delta}
Given $\delta_0>0$ we can choose $C_4$ large enough such that for any $0\leq L_1\leq L_2\leq L$ we have
\begin{eqnarray*}
e^{\overline{m}_B^aL} E^\kappa\left[  e^{-\overline\Phi_\beta(T_L)}\,;{\{|X^\perp(T_{L_2})-X^\perp(S_{L_1})|\geq C_4\sqrt{|L_2-L_1|}\}}\, ,\text{Br}(L) \right]<\delta_0.
\end{eqnarray*}  
\begin{proof}
By Proposition \ref{properties}, part $(ii)$, we have that
\begin{eqnarray*}
\overline\Phi_\beta(T_L)&\geq& \overline\Phi_\beta(T_{L_1})+\overline\Phi_\beta(S_{L_1},T_{L_2})+\overline\Phi_\beta(S_{L_2},T_L)\\
&&\qquad\qquad\quad-\beta\mathbb{E}[\omega](S_{L_1}-T_{L_1})- 
\beta\mathbb{E}[\omega](S_{L_2}-T_{L_2}),
\end{eqnarray*}
and therefore we have that
\begin{eqnarray}\label{delta 1}
&&e^{\overline{m}_B^aL} E^\kappa\left[  e^{-\overline\Phi_\beta(T_L)}\,;{\{|X^\perp(T_{L_2})-X^\perp(S_{L_1})|\geq C_4\sqrt{|L_2-L_1|}\}}\, ,\text{Br}(L) \right]\nonumber\\
&\leq& \quad\sum_{x_1,x_2\mathcal{H}_{L_1}}\sum_{x_3,x_4\mathcal{H}_{L_2}} 
     e^{\overline{m}_B^aL_1} E^\kappa\left[  e^{-\overline\Phi_\beta(T_{L_1})};\text{Br}(L_1),\,X(T_{L_1})=x_1\right] 
     \nonumber\\
&& \times\,          \sum_{n\geq 1} e^{-\lambda n}P_{x_1}\left( X(n)=x_2\right) 
\,\,e^{\overline{m}_B^a(L_2-L_1)} E^\kappa_{x_2}\Big[  e^{-\overline\Phi_\beta(T_{L_2-L_1})}\,; \nonumber\\
&&\qquad\qquad{\{|X^\perp(T_{L_2-L_1})|\geq C_4\sqrt{|L_2-L_1|}\}}\, 
                                                                           ,X(T_{L_2-L_1})=x_3,\,\text{Br}(L_2-L_1) \Big]\nonumber\\
 &&   \qquad \times\sum_{n\geq 1}e^{-\lambda n}P_{x_3}\left( X(n)=x_4\right)  \,\,
           e^{\overline{m}_B^a(L-L_2)} E^\kappa_{x_4}\left[  e^{-\overline\Phi_\beta(T_{L-L_2})};\text{Br}(L-L_2)\right] .                                                         
\end{eqnarray}
To further bound this we use Proposition \ref{subadditivity}, which assures that 
$$e^{\overline{m}_B^a(L-L_2)} E^\kappa_{x_4}\left[ e^{-\overline\Phi_\beta(T_{L-L_2})};\text{Br}(L-L_2)\right] \leq 1.$$
 We also have that $ \sum_{x_{j+1}\mathcal{H}_{L_j}}\sum_{n\geq 1}e^{-\lambda n}P_{x_j}\left( X(n)=x_{j+1}\right):=\mu_0^{-1}$, for $j=1,3$ and $x_j\in \mathcal{H}_{L_j}$. Setting for shorthand $l:=L_2-L_1$ we are lead to the following bound 
 \\ for \eqref{delta 1}
\begin{eqnarray}\label{n conv}
&&\mu_0^{-2}
     \, e^{\overline{m}_B^al} E^\kappa\left[  e^{-\overline\Phi_\beta(T_{l})}\,;{\{|X^\perp(T_{l})|\geq C_4\sqrt{l}\}},\,\text{Br}(l) \right]\nonumber\\
     &=&\mu_0^{-2} \hat{B}(l;|\hat{X}_l^\perp|>C_4 \sqrt{l}).
\end{eqnarray}
It is now immediate to conclude, using Proposition \ref{local CLT}, that \eqref{n conv} can be made small when $C_4$ is chosen large.

\end{proof}
\end{lemma}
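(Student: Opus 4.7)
The plan is to mirror the structure already used in several places in the excerpt (e.g.\ Propositions \ref{subadditivity} and \ref{2nd Holder}): break the trajectory of the bridge at the four hyperplane hitting times $T_{L_1},\,S_{L_1},\,T_{L_2},\,S_{L_2}$, pay for the stretches between first and last visits to the same hyperplane using the lower bound $\beta\overline\omega\geq -\beta\mathbb{E}[\omega]$, and then recognize the remaining middle piece as a shifted bridge on which Proposition \ref{local CLT} applies.

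More concretely, first I would observe that on $\Br(L)$ one automatically has the nested bridge structure
$\Br(T_{L_1};L_1)\cap\Br(S_{L_1},T_{L_2};L_2-L_1)\cap\Br(S_{L_2},T_L;L-L_2)$, and that the three subwalks visit disjoint sets of sites (their ranges lie strictly in the slabs between successive hyperplanes). Combined with part $(ii)$ of Proposition \ref{properties}, this gives the lower bound
\begin{equation*}
\overline\Phi_\beta(T_L)\;\geq\;\overline\Phi_\beta(T_{L_1})\,+\,\overline\Phi_\beta(S_{L_1},T_{L_2})\,+\,\overline\Phi_\beta(S_{L_2},T_L)\,-\,\beta\mathbb{E}[\omega](S_{L_1}-T_{L_1})\,-\,\beta\mathbb{E}[\omega](S_{L_2}-T_{L_2}).
\end{equation*}

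Next I would insert this inequality into the expectation, sum over the values of $X(T_{L_1}), X(S_{L_1})\in\mathcal{H}_{L_1}$ and $X(T_{L_2}), X(S_{L_2})\in\mathcal{H}_{L_2}$, and use the strong Markov property at each of these stopping times to factor the expectation into a product of three independent bridges joined by two on-hyperplane transitions. The key identity is the Girsanov-type relation used already in the derivation of \eqref{weighted partition} and in the proof of Proposition \ref{2nd Holder}: whenever $x,\tilde x$ lie on the same hyperplane,
\begin{equation*}
e^{\beta\mathbb{E}[\omega]\,n}P^\kappa_{x}(X_n=\tilde x)\;=\;e^{-\lambda n}P_x(X_n=\tilde x),
\end{equation*}
so the intermediate factors sum to $\mu_0^{-1}:=\sum_{n\geq 1}e^{-\lambda n}P(X^{(1)}_n=0)<\infty$ in each of the two on-hyperplane segments.

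After these two summations I would use the right-hand estimate of Proposition \ref{subadditivity} to bound the outer two bridge factors by $1$, which leaves the single middle bridge, translated to the origin, on its own: the whole expression is bounded by
\begin{equation*}
\mu_0^{-2}\,\hat B\bigl(L_2-L_1;\,|\hat X^\perp_{L_2-L_1}|\geq C_4\sqrt{L_2-L_1}\bigr).
\end{equation*}
At this point I would invoke Proposition \ref{local CLT}, whose Gaussian-type bound $\hat B(\ell;\hat X_\ell=x)\leq C\ell^{-(d-1)/2}e^{-C|x^\perp|^2/(2\ell)}$ yields, after summation over $x^\perp$ with $|x^\perp|\geq C_4\sqrt{\ell}$, a bound of order $\int_{|u|\geq C_4}e^{-Cu^2/2}du$, which can be made smaller than any prescribed $\delta_0$ by taking $C_4$ large, uniformly in $\ell$ (hence uniformly in $L_1,L_2,L$). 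The only delicate point is checking that the decomposition is legitimate and that the $\mu_0$ constant absorbs both the two hyperplane-transition sums; this is where the identification of $\mu_0$ as the $-1$-power of the convergent series $\sum e^{-\lambda n}P(X^{(1)}_n=0)$ is essential, and the choice of the centering $\kappa$ in \eqref{hl} is exactly what makes the cancellation work.
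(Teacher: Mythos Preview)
Your proposal is correct and follows essentially the same route as the paper's own proof: the same lower bound on $\overline\Phi_\beta(T_L)$ via Proposition~\ref{properties}(ii), the same decomposition at $T_{L_1},S_{L_1},T_{L_2},S_{L_2}$ with the Girsanov conversion of the two on-hyperplane segments into $\mu_0^{-1}$ factors, the same use of Proposition~\ref{subadditivity} to dispose of the outer bridges, and the same final appeal to Proposition~\ref{local CLT}. Your write-up is in fact slightly more explicit about why the $\mu_0$ constant arises and why the final Gaussian tail bound is uniform in $L_1,L_2,L$.
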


\section{Path Localization}\label{proof of Thm 2}
In this section we will prove Theorem \ref{Thm 2}, that is that the measure
$\mu_{L,\omega}^{\beta,\lambda}(\cdot)$ defined in  \eqref{p-t-h-measure} develops atoms, whenever the annealed and quenched Lyapounov norms are different. It will be more convenient and equivalent to prove the analogous statement for the measure 
\begin{eqnarray*}
\hat\mu_{L,\omega}^{\beta,\lambda}(x):=\frac{\overline{B}_\omega(L;x)}{\overline{B}_\omega(L)},
\end{eqnarray*} 
where $\overline{B}_\omega(L;x)$ is a shorthand notation for $\overline{B}_\omega(L;X(T_L)=x)$.
 In other words we will prove that 
\begin{eqnarray}\label{localization}
\limsup_{L\to\infty}\sup_{x\in \mathcal{H}_L}\hat\mu_{L,\omega}^{\beta,\lambda}(x)>0,\qquad \mathbb{P}-a.s.,
\end{eqnarray}
whenever the annealed and quenched Lyapounov masses $\overline{m}_B^a$ and $\overline{m}_B^q$ are different.
 
Before proceeding with the proof let us give the heuristic argument. The symbols $\simeq$ and $\lesssim$ in this heuristic argument are meant to be interpreted as {\it almost equal to} and {\it asymptotically less than}, in the limit when the length scales $N,L$, are large.

 Suppose that the annealed and quenched Lyapounov masses are different, or equivalently that there is an $\epsilon_1>0$ such that 
\begin{eqnarray}\label{inequality}
\overline{m}_B^a+\epsilon_1<\overline{m}_B^q.
\end{eqnarray}
We then have that $\mathbb{P}-a.s.$, for $N$ large enough,
\begin{eqnarray}\label{heuristic}
\overline{m}_B^q &\simeq &-\frac{1}{NL}\log \overline{B}_\omega(NL)=-\frac{1}{N}\sum_{n=1}^{N}\frac{1}{L}\log \frac{\overline{B}_\omega(nL)}{\overline{B}_\omega((n-1)L)}\nonumber\\
&\leq& -\frac{1}{N}\sum_{n=1}^{N}\frac{1}{L}\log \sum_{x\in\mathcal{H}_{(n-1)L}} \frac{\overline{B}_\omega((n-1)L;x) }{\overline{B}_\omega((n-1)L)}  \overline{B}_{x,\omega}(L)\nonumber\\
&=& -\frac{1}{N}\sum_{n=1}^{N} \frac{1}{L}\log \sum_{x\in\mathcal{H}_{(n-1)L}}\hat\mu_{(n-1)L,\omega}^{\beta,\lambda}(x) \overline{B}_{x,\omega}(L).
\end{eqnarray}
Moreover, the inequality above is obtained by restricting the path not to backtrack once it reached level $\mathcal{H}_{(n-1)L}$. 
Notice that $\hat\mu_{(n-1)L,\omega}^{\beta,\lambda}(x) $ and $\overline{B}_{x,\omega}(L)$ are independent if $x\in \mathcal{H}_{(n-1)L}$. If \eqref{localization} is not valid, that is the measure  $\hat\mu_{(n-1)L,\omega}^{\beta,\lambda}$ does not develop atoms but it rather spreads out, then an ergodicity argument should imply that, for $n$ large
\begin{eqnarray*}
 \sum_{x\in\mathcal{H}_{(n-1)L}}\hat\mu_{(n-1)L,\omega}^{\beta,\lambda}(x) \overline{B}_{x,\omega}(L) \simeq \mathbb{E} \overline{B}_{\omega}(L)=\overline{B}(L).
\end{eqnarray*}
Then, by a standard Cesaro argument, we will have that
\begin{eqnarray*}
-\frac{1}{N}\sum_{n=1}^{N} \frac{1}{L}\log \sum_{x\in\mathcal{H}_{(n-1)L}}\hat\mu_{(n-1)L,\omega}^{\beta,\lambda}(x) \overline{B}_{x,\omega}(L)\simeq -\frac{1}{L}\log \overline{B}(L),
\end{eqnarray*}
for $N$ large.
Then \eqref{heuristic} would lead to
\begin{eqnarray*}
\overline{m}_B^q \lesssim -\frac{1}{L}\log \overline{B}(L)\simeq \overline{m}_B^a,
\end{eqnarray*}
when $L$ is large enough, which contradicts \eqref{inequality}. To make this argument rigorous we need essentially to make the {\it ergodicity argument} precise. 

To this end, we start by using the fact that $\overline{B}(L)\geq \mu_0 e^{-\overline{m}_B^aL}$ (from Proposition
 \ref{subadditivity}) in the first inequality below and Chebyshev's inequality in the second one to obtain the following estimate, for $L$ large depending on $\epsilon_1$,
\begin{eqnarray}\label{Cesaro}
&&\quad\mathbb{P}\left(-\frac{1}{NL}\sum_{n=0}^{N-1}\log \sum_{x\in \mathcal{H}_{nL}} \hat\mu_{nL,\omega}^{\beta,\lambda}(x) \overline{B}_{x,\omega}(L)>\overline{m}_B^a +\frac{\epsilon_1}{2}\right)\nonumber\\
&\leq& 
\mathbb{P}\left(-\frac{1}{NL}\sum_{n=0}^{N-1}\log \sum_{x\in \mathcal{H}_{nL}} \hat\mu_{nL,\omega}^{\beta,\lambda}(x) \overline{B}_{x,\omega}(L)>-\frac{1}{L}\log \overline{B}(L) +\frac{\epsilon_1}{4}\right)\nonumber\\
&=&\mathbb{P}\left(-\frac{1}{N}\sum_{n=0}^{N-1}\frac{1}{L}\log \sum_{x\in \mathcal{H}_{nL}} \hat\mu_{nL,\omega}^{\beta,\lambda}(x)\frac{ \overline{B}_{x,\omega}(L)}{\overline{B}(L)}>\frac{\epsilon_1}{4}\right)\nonumber\\
&\leq&\frac{4}{\epsilon_1} \,\,\mathbb{E}\left[ \left|\frac{1}{N}\sum_{n=0}^{N-1}\frac{1}{L}\log \sum_{x\in \mathcal{H}_{nL}} \hat\mu_{nL,\omega}^{\beta,\lambda}(x)\frac{ \overline{B}_{x,\omega}(L)}{\overline{B}(L)} \right| \right]\nonumber\\
&\leq&\frac{4}{\epsilon_1} \,\, \frac{1}{N} \left( \sum_{n=0}^{\delta_1 N}+\sum_{n=\delta_1 N+1}^{N-1}\right)
 \mathbb{E}\left[ \left|\frac{1}{L}\log \sum_{x\in \mathcal{H}_{nL}} \hat\mu_{nL,\omega}^{\beta,\lambda}(x)\frac{ \overline{B}_{x,\omega}(L)}{\overline{B}(L)} \right|\right],
\end{eqnarray}
where in the last inequality $\delta_1=\delta_1(\epsilon_1)$ will be chosen to be small enough.
The first sum is bounded as follows. First, notice that 
$\overline{B}_{x,\omega}(L)\leq E^\kappa[e^{\beta\mathbb{E}[\omega] T_L}\,]= e^{\kappa L} E[e^{-\lambda T_L} ] \leq e^{\kappa L}$. We can now use the fact that $\overline{B}(L)\geq \mu_0e^{-\overline{m}_B^aL}$ together with
 Jensen's inequality (employ also the fact that $\overline{B}_{x,\omega}(L) e^{-\kappa L}$ is less than one) to obtain
\begin{eqnarray*}
\left|\frac{1}{L}\log \sum_{x\in \mathcal{H}_{nL}} \hat\mu_{nL,\omega}^{\beta,\lambda}(x)\frac{ \overline{B}_{x,\omega}(L)}{\overline{B}(L)} \right|
&\leq&\left|\frac{1}{L}\log \sum_{x\in \mathcal{H}_{nL}} \hat\mu_{nL,\omega}^{\beta,\lambda}(x) \overline{B}_{x,\omega}(L) e^{-\kappa L}\right|+
\left|\frac{1}{L}\log   \overline{B}(L)  e^{-\kappa L} \right|\\
&\leq &
-\sum_{x\in \mathcal{H}_{nL}} \hat\mu_{nL,\omega}^{\beta,\lambda}(x)\frac{1}{L}\log \overline{B}_{x,\omega}(L)
+2\overline{m}_B^a+2\kappa.
\end{eqnarray*}
Taking first the conditional expectation in the last inequality, conditioned on $(\omega_x)_{\{x\colon x^{(1)}<nL\}}$, we obtain that the expectation of the right hand side of the last inequality can be bounded by 
\begin{eqnarray*}
-\frac{1}{L} \mathbb{E}\log \overline{B}_{\omega}(L)+2\overline{m}_B^a+2\kappa \leq 2( \overline{m}_B^q+\overline{m}_B^a+\kappa),
\end{eqnarray*}
 when $L$ is large enough.
The first sum in \eqref{Cesaro} is then bounded by 
\begin{eqnarray}\label{Cesaro1}
\frac{8\delta_1}{\epsilon_1}(\overline{m}_B^q+\overline{m}_B^a+\kappa).
\end{eqnarray}
To estimate the second summation in \eqref{Cesaro} we first choose $\delta_2=\delta_2(\epsilon_1)$ small enough. We also denote by $\mathcal{C}_{n,L,\delta_2}$ the event that 
$|L^{-1}\log \sum_{x\in \mathcal{H}_{nL}} \hat\mu_{nL,\omega}^{\beta,\lambda}(x)\frac{ \overline{B}_{x,\omega}(L)}{\overline{B}(L)}|>\delta_2$. We then estimate the expectation in \eqref{Cesaro} as follows
\begin{align}\label{Cesaro2}
 \mathbb{E}\left[ \left|\frac{1}{L}\log \sum_{x\in \mathcal{H}_{nL}} \hat\mu_{nL,\omega}^{\beta,\lambda}(x)\frac{ \overline{B}_{x,\omega}(L)}{\overline{B}(L)} \right|\right]
 &\leq
 \delta_2 +
  \mathbb{E}\left[ \left|\frac{1}{L}\log \sum_{x\in \mathcal{H}_{nL}} \hat\mu_{nL,\omega}^{\beta,\lambda}(x)\frac{ \overline{B}_{x,\omega}(L)}{\overline{B}(L)} \right|; \,\mathcal{C}_{n,L,\delta_2} \right]\nonumber\\
  &\leq
  \delta_2 +
  \mathbb{E}\left[ \left|\frac{1}{L}\log \sum_{x\in \mathcal{H}_{nL}} \hat\mu_{nL,\omega}^{\beta,\lambda}(x)\frac{ \overline{B}_{x,\omega}(L)}{\overline{B}(L)} \right|^2\right]^{1/2}\mathbb{P}( \mathcal{C}_{n,L,\delta_2} )^{1/2}.
\end{align}
To proceed further we need an a priori bound on the last expectation, which is independent of $n$. This is as follows. For each  $x\in\mathcal{H}_{nL}$ choose the sequence of points $x_0:=x$, $x_i:=x+i\hat{e}_1$, for $i=1,2,\dots,L$. We have that
\begin{eqnarray*}
\overline{B}_{x,\omega}(L)\geq \prod_{i=1}^L \overline{B}_{x_{i-1},\omega}(1;x_i),
\end{eqnarray*}
where $\overline{B}_{x_{i-1},\omega}(1;x_i)$ denotes the bridge of span $1$, starting from $x_{i-1}$ and ending at $x_i$. Using this, Jensen's inequality and the fact that $\overline{B}(L)\geq \mu_0e^{-\overline{m}_B^aL}$, in the same fashion as the route to obtaining \eqref{Cesaro1}, we have that
\begin{eqnarray*}
\left|\frac{1}{L}\log \sum_{x\in \mathcal{H}_{nL}} \hat\mu_{nL,\omega}^{\beta,\lambda}(x)\frac{ \overline{B}_{x,\omega}(L)}{\overline{B}(L)} \right|^2
&\leq &
2\sum_{x\in \mathcal{H}_{nL}} \hat\mu_{nL,\omega}^{\beta,\lambda}(x) \left|\frac{1}{L} \sum_{i=1}^L \log   \overline{B}_{x_{i-1},\omega}(1;x_i)e^{-\kappa }\right|^2\\
&&\qquad\qquad+2(2 \overline{m}_B^a+\kappa)^2\\
&\leq&
2\sum_{x\in \mathcal{H}_{nL}} \hat\mu_{nL,\omega}^{\beta,\lambda}(x) \frac{1}{L} \sum_{i=1}^L\left| \log   \overline{B}_{x_{i-1},\omega}(1;x_i)e^{-\kappa }\right|^2\\
&&\qquad\qquad+2(2 \overline{m}_B^a+\kappa)^2,
\end{eqnarray*}
and to estimate the expectation in \eqref{Cesaro2} we first take the conditional expectation conditioned on $(\omega_x)_{\{x\colon x^{(1)}<nL\}}$ leading to the bound
\begin{align}\label{log2}
 \mathbb{E}\left[ \left|\frac{1}{L}\log \sum_{x\in \mathcal{H}_{nL}} \hat\mu_{nL,\omega}^{\beta,\lambda}(x)\frac{ \overline{B}_{x,\omega}(L)}{\overline{B}(L)} \right|^2\right]
 &\leq
 2\mathbb{E} [ \left| \log   \overline{B}_\omega(1;0,\hat{e}_1) e^{-\kappa }\right|^2 ] +2(2 \overline{m}_B^a+\kappa)^2\nonumber\\
 &:=C^*_{\beta,\lambda}.
\end{align}
Next we control the probability $\mathbb{P}( \mathcal{C}_{n,L,\delta_2} )$ as follows
\begin{eqnarray*}
\mathbb{P}( \mathcal{C}_{n,L,\delta_2} )&=& \mathbb{P}\left( \sum_{x\in \mathcal{H}_{nL}}  \hat\mu_{nL,\omega}^{\beta,\lambda}(x) \overline{B}_{x,\omega}(L) >e^{\delta_2 L}\overline{B}(L)\right)
+\mathbb{P}\left( \sum_{x\in \mathcal{H}_{nL}}  \hat\mu_{nL,\omega}^{\beta,\lambda}(x) \overline{B}_{x,\omega}(L) <e^{-\delta_2 L}\overline{B}(L)\right),
\end{eqnarray*}
denoting $\tilde{B}_{x,\omega}(L):=\overline{B}_{x,\omega}(L)-\overline{B}(L)$ and using Chebyshev's inequality we have
\begin{eqnarray}\label{prob est}
\mathbb{P}( \mathcal{C}_{n,L,\delta_2} ) &\leq&\frac{1}{\overline{B}(L)^2} \left(\frac{1}{(e^{\delta_2 L}-1)^2}+\frac{1}{(1-e^{-\delta_2 L})^2}\right) \times\nonumber\\
&&\qquad\times\mathbb{E}\left[ \left( \sum_{x\in \mathcal{H}_{nL}}  \hat\mu_{nL,\omega}^{\beta,\lambda}(x) \tilde{B}_{x,\omega}(L)\right)^2\right].
\end{eqnarray}
To estimate the expectation in \eqref{prob est} we write 
\begin{eqnarray*}
\tilde{B}_{x,\omega}(L)&=&E^\kappa_x\left[ ( e^{-\beta\sum_y \overline\omega_y\ell_{T_L}(y)} -e^{-\overline\Phi_\beta(T_L)} );\text{Br}(L), \sup_{n\leq T_L}|X_n-x|\leq C_5 L\right]\\
&&+E^\kappa_x\left[ ( e^{-\beta\sum_y \overline\omega_y\ell_{T_L}(y)} -e^{-\overline\Phi_\beta(T_L)} );\text{Br}(L), \sup_{n\leq T_L}|X_n-x|> C_5 L\right],
\end{eqnarray*}
where $C_5$ is a large constant. We denote the first term above by $\tilde{B}^{loc}_{x,\omega}(L)$ and we note that it satisfies $\mathbb{E}[\tilde{B}^{loc}_{x,\omega}(L)]=0$. Moreover the second term is bounded by 
\begin{eqnarray*}
E^\kappa_x\left[ e^{\beta\mathbb{E}[\omega] T_L};\, \sup_{n\leq T_L}|X_n-x|> C_5 L\right]
  &\leq& E^\kappa\left[ e^{\beta\mathbb{E}[\omega] T_L}; T_L>C_5 L\right]\\
  &=& e^{\kappa L} E\left[ e^{-\lambda T_L}; \,T_L>C_5L\right]\\
  &\leq& e^{-\frac{\lambda}{2}C_5 L+\kappa L}  E\left[ e^{-\frac{\lambda}{2}T_L}\right]\\
  &\leq& e^{-\frac{\lambda}{4}C_5 L}
\end{eqnarray*}
provided that $C_5$ is chosen large enough. Notice also that $\tilde{B}^{loc}_{x,\omega}(L)$ and $\tilde{B}^{loc}_{y,\omega}(L)$ are independent when $|x-y|>2C_5L$. Using all these and expanding the square in the expectation of \eqref{prob est} we have that
\begin{eqnarray*}
&&\mathbb{E}\left[ \left( \sum_{x\in \mathcal{H}_{nL}}  \hat\mu_{nL,\omega}^{\beta,\lambda}(x) \tilde{B}_{x,\omega}(L)\right)^2\right]\\
&&\qquad=
\sum_{x,y\in \mathcal{H}_{nL}} \mathbb{E}\left[  \hat\mu_{nL,\omega}^{\beta,\lambda}(x) \hat\mu_{nL,\omega}^{\beta,\lambda}(y) \left( \tilde{B}^{loc}_{x,\omega}(L)  +O(e^{-\frac{\lambda C_5}{4} L})\right)  
\left( \tilde{B}^{loc}_{y,\omega}(L)  +O(e^{-\frac{\lambda C_5}{4} L})\right) \right]\\
&&\qquad=
\sum_{x,y\in \mathcal{H}_{nL}} \mathbb{E}\left[  \hat\mu_{nL,\omega}^{\beta,\lambda}(x) \hat\mu_{nL,\omega}^{\beta,\lambda}(y)\right] \,\,\mathbb{E}\left[ \tilde{B}^{loc}_{x,\omega}(L)  
\tilde{B}^{loc}_{y,\omega}(L) \right] + O(e^{-\frac{\lambda C_5}{8} L})\\
&&\qquad\leq 
e^{2\kappa L}\sum_{x,y\in \mathcal{H}_{nL}\colon |x-y|\leq 2C_5L} \mathbb{E}\left[  \hat\mu_{nL,\omega}^{\beta,\lambda}(x) \hat\mu_{nL,\omega}^{\beta,\lambda}(y)\right] +O(e^{-\frac{\lambda C_5}{8} L})\\
&&\qquad\leq
CC^{d-1}_5 L^{d-1}e^{2\kappa L} \,\mathbb{E}\left[  \sup_{x\in \mathcal{H}_{nL}}\hat\mu_{nL,\omega}^{\beta,\lambda}(x) \right] +O(e^{-\frac{\lambda C_5}{8} L}).
\end{eqnarray*}
Assuming that \eqref{localization} is false we have that $\limsup_{n\to\infty}\sup_{x\in \mathcal{H}_{nL}}\mu_{\omega,{nL}}(x)=0$, $\mathbb{P}-a.s$,  and inserting this into \eqref{prob est} we get that, for $n$ large enough, depending on $L,\delta_1,\delta_2$ and having chosen $C_5$ large enough so that also $2 \overline{m}_B^a-\lambda C_5/8<-\lambda C_5/16$, we have
\begin{eqnarray}\label{prob est2}
\mathbb{P}( \mathcal{C}_{n,L} ) \leq \delta_2^2,
\end{eqnarray}
This estimate together with \eqref{log2} inserted into \eqref{Cesaro2} leads to
\begin{eqnarray*}
 \frac{4}{\epsilon_1}\frac{1}{N}\sum_{n=\delta_1 N+1}^{N-1}\mathbb{E}\left[ \left|\frac{1}{L}\log \sum_{x\in \mathcal{H}_{nL}} \hat\mu_{nL,\omega}^{\beta,\lambda}(x)\frac{ \overline{B}_{x,\omega}(L)}{\overline{B}(L)} \right|\right]
 \leq 
  \frac{4 \delta_2}{\epsilon_1}\left(1 + \left( C_{\beta,\lambda}^*\right)^{1/2}  \right),
\end{eqnarray*}
which is of course valid if $N>N_0(\delta_1,\delta_2,L)$, large enough. This together with \eqref{Cesaro1} and \eqref{Cesaro} show that, if $\delta_1,\delta_2$ are chosen small enough, both depending on $\epsilon_1$, we have that 
\begin{eqnarray*}
\mathbb{P}\left(-\frac{1}{NL}\sum_{n=0}^{N-1}\log \sum_{x\in \mathcal{H}_{nL}} \hat\mu_{nL,\omega}^{\beta,\lambda}(x) \overline{B}_{x,\omega}(L)>\overline{m}_B^a +\frac{\epsilon_1}{2}\right) <\epsilon_1.
\end{eqnarray*}
Therefore, with probability greater than $1-\epsilon_1$ we have that
\begin{eqnarray}\label{last}
-\frac{1}{NL}\sum_{n=0}^{N-1}\log \sum_{x\in \mathcal{H}_{nL}} \hat\mu_{nL,\omega}^{\beta,\lambda}(x) \overline{B}_{x,\omega}(L)<\overline{m}_B^a +\frac{\epsilon_1}{2},
\end{eqnarray}
which leads to contradiction since by \eqref{heuristic}, the left hand side of \eqref{last} is larger than  $-\frac{1}{NL}\log \overline{B}_\omega(NL)$, which, for $N$ large, converges $a.s.$ to $\overline{m}_B^q>\overline{m}_B^a+\epsilon_1$.

\vskip 4mm 
{\bf Acknowledgement:} This work was motivated by discussions with D.Ioffe at the Institut Henri Poincar\'e - Centre Emile Borel and Technion-Haifa. I would like to thank D.Ioffe for several stimulating discussions and the institutes for the hospitality. I would also like to thank A.S. Sznitman for his constructive comments on an earlier draft of the paper. Part of this work was supported by IRG-246809.

\end{document}